\documentclass[11pt]{article}

\setlength{\textwidth}{15cm} 
\setlength{\hoffset}{-1.2cm}

\usepackage{amssymb,amsmath}
\usepackage{amsthm}
\usepackage{hyperref}
\usepackage{mathrsfs}
\usepackage{textcomp}
\hypersetup{
    colorlinks,%
    citecolor=black,%
    filecolor=black,%
    linkcolor=black,%
    urlcolor=black
}

\usepackage{verbatim}

\makeatletter

\newdimen\bibspace
\setlength\bibspace{0pt}   
\renewenvironment{thebibliography}[1]{%
 \section*{\refname 
       \@mkboth{\MakeUppercase\refname}{\MakeUppercase\refname}}%
     \list{\@biblabel{\@arabic\c@enumiv}}%
          {\settowidth\labelwidth{\@biblabel{#1}}%
           \leftmargin\labelwidth
           \advance\leftmargin\labelsep
           \itemsep\bibspace
           \parsep\z@skip     %
           \@openbib@code
           \usecounter{enumiv}%
           \let\p@enumiv\@empty
           \renewcommand\theenumiv{\@arabic\c@enumiv}}%
     \sloppy\clubpenalty4000\widowpenalty4000%
     \sfcode`\.\@m}
    {\def\@noitemerr
      {\@latex@warning{Empty `thebibliography' environment}}%
     \endlist}

\makeatother

\makeatletter

\newtheorem{thm}{Theorem}[section]
\newtheorem{lem}[thm]{Lemma}

\newtheorem{cor}[thm]{Corollary}
\newtheorem{rem}[thm]{Remark}

\numberwithin{equation}{section}

\def\XXint#1#2#3{{\setbox0=\hbox{$#1{#2#3}{\int}$}
  \vcenter{\hbox{$#2#3$}}\kern-.5\wd0}}

\newcommand{\al}{\alpha}                \newcommand{\lda}{\lambda}
                \newcommand{\pa}{\partial}
\newcommand{\va}{\varepsilon}           \newcommand{\ud}{\mathrm{d}}
\newcommand{\be}{\begin{equation}}      \newcommand{\ee}{\end{equation}}
                 
\newcommand{\Lda}{\Lambda}              
\newcommand{\R}{\mathbb{R}}

\begin{document}

\title{\textbf{Schauder estimates for  nonlocal  fully nonlinear equations}
\bigskip}

\author{Tianling Jin\quad and \quad Jingang Xiong}

\date{\today}

\maketitle

\begin{abstract}
In this paper, we establish pointwise  Schauder estimates for solutions of nonlocal  fully nonlinear elliptic equations by perturbative arguments.
A key ingredient is a  recursive Evans-Krylov theorem for nonlocal  fully nonlinear translation invariant equations.
\end{abstract}

\section{Introduction}
Integro-differential equations, which are usually called nonlocal equations nowadays, appear naturally when studying discontinuous stochastic process.  
In a series papers of L. Caffarelli and L. Silvestre \cite{CS09, CS10, CS11}, regularities of solutions of nonlocal fully nonlinear elliptic equations such as H\"older estimates, $C^{1+\alpha}$ estimates, Cordes-Nirenberg type estimates and Evans-Krylov theorem were established.  In this paper, we shall prove Schauder estimates for nonlocal fully nonlinear elliptic equations of the type:
\be\label{eq:main schauder}
\inf_{a\in\mathcal A} \left\{\int_{\R^n} \delta u(x,y)K_a(x,y)\,\ud y\right\}=f(x)\quad\mbox{in }B_5,
\ee
where $\delta u(x,y)=u(x+y)+u(x-y)-2u(x)$, $\mathcal A$ is an index set, and each $K_a$ is a positive kernel.
We will restrict our attention to symmetric kernels which satisfy
\be\label{eq:symmetry}
K(x,y)=K(x,-y).
\ee
We also assume that the kernels are uniformly elliptic
\be\label{eq:elliptic}
\frac{(2-\sigma)\lda }{|y|^{n+\sigma}} \leq K(x,y)\leq \frac{(2-\sigma)\Lda }{|y|^{n+\sigma}}
\ee
for some $0<\lda\le\Lda<\infty$, which is an essential assumption leading to local regularizations. Finally,  we suppose that the kernels are $C^2$ away from the origin and satisfy
\be\label{eq:kernel smooth1}
|\nabla^i_y K(x,y)|\leq \frac{\Lda}{|y|^{n+\sigma+i}},\quad i=1,2.
\ee
We say that a kernel $K\in \mathscr{L}_0(\lda, \Lda, \sigma)$ if $K$ satisfies \eqref{eq:symmetry} and \eqref{eq:elliptic}, and $K\in \mathscr{L}_2(\lda, \Lda, \sigma)$ if $K$ satisfies \eqref{eq:symmetry}, \eqref{eq:elliptic} and \eqref{eq:kernel smooth1}. In this paper, all the solutions of nonlocal equations are understood in the viscosity sense, where the definitions of such solutions can be found in \cite{CS09}.  

One way to obtain Schauder estimates is that first we prove high regularity for solutions of translation invariant (or ``constant coefficients") equations, and then use perturbative arguments or approximations. In our case, the regularities for translation invariant equations should be the Evans-Krylov theorem for nonlocal fully nonlinear equations proved in \cite{CS11}, which states that: If $u$ is a bounded solution of
\[
\inf_{a\in\mathcal A} \left\{\int_{\R^n} \delta u(x,y)K_a(y)\,\ud y\right\}=0\quad\mbox{in }B_5,
\]
where every $K_a(y)\in\mathscr L_2 (\lda,\Lda, \sigma)$ with $\sigma\ge\sigma_0>0$. 
Then, $u\in C^{\sigma+\bar\al}(B_1)$ for some $\bar\al>0$. Moreover,
\be\label{eq:E-K-1}
\|u\|_{C^{\sigma+\bar\al}(B_{1})}\le N_{ek}\|u\|_{L^\infty(\R^n)},
\ee
where both $\bar\al$ and $N_{ek}$ are positive constants depending only on $n,\sigma_0,\lda, \Lda$. Note that $\bar\al$ and $N_{ek}$ do not depend on $\sigma$, and thus, do not blow up as $\sigma\to 2$. The result becomes most interesting when $\sigma$ is close to $2$ and $\sigma+\bar\al>2$. If we let $\sigma\to 2$, then it recovers the theorem of Evans and Krylov about the regularity of solutions to concave uniformly elliptic PDEs of second order. 

Throughout the paper, we will always denote $\bar\al$ as the one in \eqref{eq:E-K-1} without otherwise stated.

In the step of approximations to obtain Schauder estimates at $x=0$, it usually requires that the coefficients of the equations, which in our case are $K(x,y)$ and $f(x)$, are H\"older continuous at $x=0$ in some sense. For the right-hand side $f(x)$, we assume $f$ satisfies the standard H\"older condition that
\be\label{eq:holder f}
|f(x)-f(0)|\le M_f |x|^\al \quad \mbox{and}\quad|f(x)|\le M_f
\ee
for all $x\in B_5$, where $M_f$ is a nonnegative constant. 

For the kernel $K$, one may impose different types of H\"older conditions. Here, we focus on the (most delicate, as explained below) case that $\sigma+\bar\al-2\ge\gamma_0>0$, and we will assume the kernels satisfy
\be\label{eq:holder K}
\int_{\R^n}|K(x,y)-K(0,y)|\min(|y|^2,r^2)dy \le \Lambda |x|^{\alpha} r^{2-\sigma}
\ee
for all $r\in (0,1]$, $x\in B_5$.

For $s\in\mathbb R$, $[s]$ denotes the largest integer that is less than or equals to $s$. Our main result  is the following pointwise Schauder estimates for solutions of \eqref{eq:main schauder}. Recall that $\bar\alpha$ is the one in \eqref{eq:E-K-1}.
\begin{thm}\label{thm:schauder-nl}
Assume every $K_a(x,y)\in\mathscr L_2 (\lda,\Lda, \sigma)$ satisfies \eqref{eq:holder K} with $\al\in(0,\bar\al)$, $\sigma+\bar\al-2\ge\gamma_0>0$ and $|\sigma+\al- 2|\ge \va_0>0$. Suppose that $f$ satisfies \eqref{eq:holder f}. If $u$ is a bounded viscosity solution of \eqref{eq:main schauder}, then there exists a polynomial $P(x)$ of degree $[\sigma+\al]$ such that for $x\in B_1$,
\be\label{eq:schauder estimate}
\begin{split}
|u(x)-P(x)|&\leq C\left(\|u\|_{L^\infty(\R^n)}+ M_f\right)|x|^{\sigma+\al};\\
|\nabla^j P(0)|&\leq C\left(\|u\|_{L^\infty(\R^n)}+ M_f\right),\ j=0,\cdots, [\sigma+\al],
\end{split}
\ee
where $C$ is a positive constant depending only on $\lda,\Lda, n, \bar\al, \al, \va_0$ and $\gamma_0$.
\end{thm}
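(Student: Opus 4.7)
I would prove (\ref{eq:schauder estimate}) by a Campanato-type iteration on geometric scales $r_k=\rho^k$ for some small fixed $\rho\in(0,1)$, constructing inductively polynomials $P_k$ of degree $[\sigma+\al]$ satisfying
\[
\sup_{B_{r_k}}|u-P_k|\le C_0\, r_k^{\sigma+\al}\bigl(\|u\|_{L^\infty(\R^n)}+M_f\bigr),
\]
together with $|\nabla^j(P_{k+1}-P_k)(0)|\le C_0\, r_k^{\sigma+\al-j}(\|u\|_{L^\infty(\R^n)}+M_f)$ for $j=0,\dots,[\sigma+\al]$. Summing the telescoping series will produce the limiting polynomial $P$ satisfying both assertions of (\ref{eq:schauder estimate}). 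The fact that $\al<\bar\al$ provides the positive margin $\bar\al-\al$ that lets the geometric iteration close.

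\textbf{One-step approximation.} After normalizing $\|u\|_{L^\infty(\R^n)}+M_f\le 1$, the inductive step reduces to the following lemma: there exist $\rho\in(0,1/2)$ and $\delta>0$ such that if $M_f$ and the H\"older constant in (\ref{eq:holder K}) are both at most $\delta$, then there is a polynomial $P$ of degree $[\sigma+\al]$ with $\sup_{B_\rho}|u-P|\le\rho^{\sigma+\al}$ and $|\nabla^j P(0)|\le C$. To prove the lemma I would freeze the kernels and the data at $0$ and let $v$ solve the translation-invariant problem
\[
\inf_{a\in\mathcal A}\int_{\R^n}\delta v(x,y)K_a(0,y)\,\ud y=f(0)\quad\text{in }B_{1/2},\qquad v=u\ \text{on }B_{1/2}^c.
\]
The comparison principle via the nonlocal Pucci extremal operators of \cite{CS09}, applied to $u-v$ together with (\ref{eq:holder K}) and (\ref{eq:holder f}) tested against $\min(|y|^2,r^2)$, yields $\|u-v\|_{L^\infty(B_{1/2})}\le C\delta$. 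The Evans-Krylov estimate (\ref{eq:E-K-1}) applied to $v$ (after absorbing the constant $f(0)$ via a fixed particular solution) then gives $v\in C^{\sigma+\bar\al}(B_{1/4})$, hence a Taylor polynomial $T_v$ of degree $[\sigma+\al]$ with $\sup_{B_\rho}|v-T_v|\le N_{ek}\rho^{\sigma+\bar\al}$. Setting $P=T_v$ and first choosing $\rho$ so that $N_{ek}\rho^{\bar\al-\al}\le 1/4$, and then $\delta$ sufficiently small, closes the step.

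\textbf{Iteration and main obstacle.} Rescaling $u_k(x)=\rho^{-k(\sigma+\al)}(u-P_k)(\rho^k x)$, I must check that $u_k$ solves an equation of the same form on $B_{\rho^{-k}}$, with the H\"older constant of the rescaled kernels in (\ref{eq:holder K}) staying uniformly bounded. The relevant scaling exponent is $\sigma+\al-2$, and the assumption $|\sigma+\al-2|\ge\va_0$ is precisely what keeps this exponent away from the degenerate value $0$, so that all rescaled seminorms of kernels and right-hand side stay uniformly controlled. The central difficulty --- and the main obstacle --- arises when $\sigma+\al>2$, which occurs in the regime forced by $\gamma_0>0$ (since then $\sigma+\bar\al>2$ and the approximating polynomials acquire a nonzero quadratic part): the polynomial $P_k$ grows at infinity and cannot be inserted into the nonlocal operator directly, while the identity $\delta(u-P_k)=\delta u-\delta P_k$ combined with the sublinearity of $\inf_a$ produces Pucci-type inequalities for $u-P_k$ with an extra, nonconstant right-hand side $\inf_a L_a P_k$. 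I would handle this by smoothly truncating $P_k$ outside $B_1$ and absorbing the contribution $\inf_a L_a(\chi P_k)$ into the perturbed forcing term; the H\"older modulus at $0$ of this correction is controlled by integrating (\ref{eq:holder K}) against $\min(|y|^2,r^2)$, and the condition $\sigma+\bar\al-2\ge\gamma_0$ ensures that the second-order jet produced by the Evans-Krylov step is H\"older-continuous in the classical sense, making the extra forcing a genuinely lower-order perturbation that feeds back into the one-step lemma at the next scale.
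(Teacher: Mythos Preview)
Your outline is the natural Campanato iteration, and the one-step lemma you describe (freeze the kernels, compare $u$ with the frozen solution $v$, Taylor-expand $v$) is essentially correct and matches what the paper does at the very first scale. The genuine gap is in the \emph{iteration} in the regime $\sigma+\al>2$, and the paper devotes its entire Section~\ref{sec:EK} to filling it.

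The claim ``$u_k(x)=\rho^{-k(\sigma+\al)}(u-P_k)(\rho^k x)$ solves an equation of the same form'' is where the argument breaks. Writing $u=(u-\chi P_k)+\chi P_k$ and using that the operator is an infimum, the rescaled function $u_k$ satisfies
\[
\inf_{a}\Big\{\tilde L_a^{x} u_k(x)+\rho^{-k\al}\,(L_a^{\rho^k x}\chi P_k)(\rho^k x)\Big\}=\rho^{-k\al}f(\rho^k x),
\]
which is \emph{not} of the form $\inf_a \tilde L_a u_k=\tilde f$: the correction $h_a^{(k)}(x):=\rho^{-k\al}(L_a\chi P_k)(\rho^k x)$ sits inside the infimum and depends on $a$, so it cannot be moved to the right-hand side as a single ``perturbed forcing''. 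Your proposal to control only its H\"older modulus at $0$ is not enough, because to apply any Evans--Krylov estimate to the frozen problem $\inf_a\{\tilde L_a^{0} v+h_a^{(k)}(0)\}=\rho^{-k\al}f(0)$ (this is Theorem~\ref{thm:E-K-2}) you need $|\inf_a h_a^{(k)}(0)-\rho^{-k\al}f(0)|$ bounded uniformly in $k$. But $\inf_a L_a^{0}(\chi P_k)(0)$ differs from $f(0)$ by an $O(1)$ amount --- $\chi P_k$ agrees with the previous approximant only near $0$, while the nonlocal operator sees the whole function --- and the factor $\rho^{-k\al}$ makes this blow up. Equivalently, the rescaled tail of $u-\chi P_k$ grows like $|x|^{\sigma+\al}$ on $B_{\rho^{-k}}$, so its weighted mass $\int |u_k(y)|(1+|y|)^{-n-\sigma}\,\ud y\sim\rho^{-k\al}$ diverges, and neither the approximation lemma nor the maximum principle can be applied at step $k$ with constants independent of $k$. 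The introduction of the paper flags exactly this obstruction.

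The paper's remedy is to abandon polynomial subtraction altogether and instead build a sequence $\{w_\ell\}$ of \emph{solutions} to frozen-coefficient problems, each equal to $u-\sum_{j<\ell}w_j$ outside a shrinking ball. These are globally bounded by the maximum principle, so the tail issue disappears. The price is that at step $i+1$ the frozen equation for $v_{i+1}$ carries all the previous $v_\ell$'s inside the $\inf_a$ (see \eqref{eq:main}), and the standard Evans--Krylov theorem does not give $C^{\sigma+\bar\al}$ bounds for $v_{i+1}$ that are uniform in $i$; Remark~\ref{rem:how to appr} shows one also cannot freeze those extra terms at $x=0$. This is precisely why the paper proves the recursive Evans--Krylov theorem (Theorem~\ref{thm:E-K-3}), which is the missing ingredient in your outline.
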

Roughly speaking, Theorem \ref{thm:schauder-nl} states that if $K$ and $f$ are of $C^\al$ at $x=0$ in the sense of \eqref{eq:holder K} and \eqref{eq:holder f}, respectively, then the solution $u$ of \eqref{eq:main schauder} is precisely of $C^{\sigma+\al}$ at $x=0$. Moreover, the constant $C$ in \eqref{eq:schauder estimate} does not depend on $\sigma$, and hence, does not blow up as $\sigma\to 2$.  

Various Schauder estimates for solutions of some  nonlocal linear equations were obtained before by R.F. Bass \cite{BR}, R. Mikulevicius and H. Pragarauskas \cite{MP}, H. Dong and D. Kim \cite{DK}, B. Barrera, A. Figalli and E. Valdinoci \cite{BFV}, D. Kriventsov \cite{KD},  as well as the authors \cite{JX}. The results in \cite{BFV} have applications to nonlocal minimal surfaces. The equations considered in \cite{BR, MP, DK, KD} are of rough kernels, i.e., without the assumption \eqref{eq:kernel smooth1}. Also in \cite{KD}, D. Kriventsov proved $C^{1+\alpha}$ estimates for nonlocal fully nonlinear equations with rough kernels when the order of the equation $s>1$ by perturbative arguments. Later, J. Serra \cite{JS1} extended this result in \cite{KD} to parabolic equations and used a different method.  In \cite{KRS},  M. Kassmann, M. Rang and R. W. Schwab proved H\"older regularity results for those nonlocal equations whose ellipticity bounds are strongly directionally dependent.  Recently, X. Ros-Oton and J. Serra \cite{ROS} studied boundary regularity for nonlocal fully nonlinear equations. One may see, e.g., \cite{BCI, DK1, GS} for more regularity results on nonlocal elliptic equations.

For the H\"older condition \eqref{eq:holder K} on the kernels, one can check that it will hold if the kernels satisfy the pointwise H\"older continuous condition $|K(x,y)-K(0,y)|\le \Lda (2-\sigma)|x|^\al |y|^{-n-\sigma}$. In the case of $\sigma+\bar\al<2$, all of our arguments still work except that one needs to  change  the condition \eqref{eq:holder K} to \eqref{eq:holder K-2} or \eqref{eq:holder K-3}, since the approximation solutions will be of only $C^{\sigma+\bar\al}$; see Remark \ref{rem:less than 2} and Corollary \ref{cor:unified}.

In the case of second order partial differential equations $F(\nabla^2 u,x)=0$, to show that $u\in C^{2+\al}$, we usually use second order polynomials $p(x)$ to approximate $u$ (see \cite{Ca,CC}), in which one implicit convenience is that $\nabla^2 p(x)$ is a constant function. In the nonlocal case, to prove $C^{\sigma+\al}$ estimates of solutions to \eqref{eq:main schauder} for $\sigma+\al>2$, second order polynomial approximation does not seem to work directly, since first of all, for a second order polynomial $p(x)$, it grows too fast at infinity so that $\delta p(x,y)K(y)$ is not integrable; and secondly, in general $\int_{\R^n}\delta \tilde p(x,y)K(y)\ud y$ will not be a constant function for any cut-off $\tilde p(x)$ of $p(x)$ so that we cannot apply Evans-Krylov theorem during the approximation and will lose control of the error. Another common difficulty in approximation arguments to obtain regularities for nonlocal equations is to control the error outside of the balls in the iteration,  which may result in a slight loss of regularity as in \cite{CS10} compared to second order equations.  Instead of polynomials, we will approximate the genuine solution by solutions of ``constant coefficients" equations, which is inspired by \cite{Ca,LN}. In this way, we do not need to worry about either polynomials or the errors coming from the infinity. But a new difficulty arises for fully nonlinear equations (which does not appear in the case of linear equations): the Evans-Krylov theorem in \cite{CS11} cannot be applied to obtain the uniform estimates for the sequence of approximation solutions to those ``constant coefficients'' equations; see also Remark \ref{rem:how to appr}. This leads us to establish a recursive Evans-Krylov theorem in Theorem \ref{thm:E-K-3} to overcome this difficulty.

Our paper is organized as follows. In Section \ref{sec:EK}, we prove Theorem \ref{thm:E-K-3}, a recursive Evans-Krylov theorem for nonlocal fully nonlinear equations, where we adapt the proofs in \cite{CS11} with delicate decomposition and cut-offs arguments. In Section \ref{sec:schauder}, we will use Theorem \ref{thm:E-K-3} and perturbative arguments to prove the Schauder estimates in Theorem \ref{thm:schauder-nl}. In the Appendix, we recall some definitions and notions of nonlocal operators from \cite{CS10}, and establish two approximation lemmas for our own purposes, which are variants of that in \cite{CS10}.

After we finished our paper, we learned from Joaquim Serra that he has a preprint \cite{JS} on estimates for concave nonlocal fully nonlinear elliptic equations with rough kernels, where Schauder estimates are obtained using very different methods. 

\bigskip

\noindent\textbf{Acknowledgements:} We would like to thank Professor Luis Silvestre for many useful discussions. We also thank Professor YanYan Li for his interests and constant encouragement. Tianling Jin was supported in part by NSF grant DMS-1362525. Jingang Xiong was supported in part by the First Class Postdoctoral Science Foundation of China (No. 2012M520002) and Beijing Municipal Commission of Education for the Supervisor of Excellent Doctoral Dissertation (20131002701).

\section{A recursive Evans-Krylov theorem}\label{sec:EK}

\subsection{Statements and ideas of the proof}\label{sec:statement}

If we re-examine the proof of the nonlocal Evans-Krylov theorem in \cite{CS11}, we can show the following theorem with few modification.
\begin{thm}\label{thm:E-K-2}
Assume that every $K_a(y)\in\mathscr L_2 (\lda,\Lda, \sigma)$ with $2>\sigma\ge\sigma_0>1$ and every $b_a$ is a constant. If $w$ is a bounded solution of
\[
\inf_{a\in\mathcal A} \left\{\int_{\R^n} \delta w(x,y)K_a(y)\,\ud y+b_a\right\}=0\quad\mbox{in }B_5,
\]
then, $w\in C^{\sigma+\bar\al}(B_1)$, and there holds
\[
\|w\|_{C^{\sigma+\bar\al}(B_{1})}\le N_{ek}(\|w\|_{L^\infty(\R^n)}+|\inf_{a} b_a|),
\]
where both $\bar\al$ and $N_{ek}$ are the same as those in \eqref{eq:E-K-1}.
\end{thm}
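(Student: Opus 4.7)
The plan is to reprise the proof of the nonlocal Evans--Krylov theorem of Caffarelli and Silvestre \cite{CS11}, checking that the constants $b_a$ either cancel in the relevant difference-type estimates or appear as a bounded additive inhomogeneity controlled by $|\inf_a b_a|$. Write the operator as $F(v)(x) = \inf_{a\in\mathcal A}\{L_a v(x) + b_a\}$ with $L_a v(x) = \int_{\R^n}\delta v(x,y) K_a(y)\,\ud y$. Three structural features of $F$ drive the CS11 proof and are all preserved in our setting: $F$ is the infimum of affine operators and hence concave in $v$; $F$ is translation invariant in $x$; and each $L_a$ is uniformly elliptic of order $\sigma$ in the class $\mathscr L_2(\lda,\Lda,\sigma)$.

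\textbf{Step 1: Pucci bounds on differences.} For a unit direction $e$ and $|h|$ small, set $w_h(x) = w(x+he) - w(x)$. At each $x$, pick an index $a^*(x)$ achieving the infimum for $w$ at $x$, so that $L_{a^*(x)}w(x) = -b_{a^*(x)}$. Then
\[
L_{a^*(x)} w_h(x) = L_{a^*(x)} w(x+he) + b_{a^*(x)} \ge 0,
\]
the last inequality being the equation evaluated at $x+he$. This yields $\mathcal M^+_{\mathscr L_0} w_h(x)\ge 0$. Analogously, choosing $a^*(x+he)$ gives $L_{a^*(x+he)} w_h(x) \le 0$, hence $\mathcal M^-_{\mathscr L_0} w_h(x)\le 0$. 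The constants $b_a$ cancel pairwise in both estimates, so that first differences of $w$ satisfy the same Pucci inequalities as in the homogeneous case, independently of $|\inf_a b_a|$.

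\textbf{Step 2: Reprise the CS11 argument.} Rerun the proof of the homogeneous nonlocal Evans--Krylov theorem from \cite{CS11} for the operator $F$. The tangent-hyperplane step, at a point $x_0$ where $a_0$ is active, takes the form
\[
F(v)(x_0) - F(w)(x_0) \le L_{a_0}(v-w)(x_0),
\]
with the $b_{a_0}$'s cancelling on the right-hand side. Differences of $w$ are controlled via the H\"older theory for Pucci-type inequalities from \cite{CS09,CS10}, applied to the estimates of Step 1. The only places where $|\inf_a b_a|$ enters are the $L^\infty$-comparison barriers used for normalization and the zero-order evaluation $F(\mathrm{constant}) = \inf_a b_a$; both contribute linearly in $|\inf_a b_a|$ and combine with $\|w\|_{L^\infty(\R^n)}$ in the final bound, giving the stated estimate with the same constant $N_{ek}$ as in \eqref{eq:E-K-1}.

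\textbf{Main obstacle.} The principal difficulty is the detailed bookkeeping in Step 2: verifying that every appeal to the equation in the CS11 iteration either produces a clean cancellation of the $b_a$'s, as in Step 1, or at worst an additive term bounded by $|\inf_a b_a|$, without perturbing the universal constants $\bar\al$ and $N_{ek}$. The hypothesis $\sigma>1$ ensures that first-order Taylor expansions behave properly under rescaling in the iteration, so that the zero-order $b_a$ contribution is correctly scaled at each step and cannot overwhelm the higher-order increments of $w$.
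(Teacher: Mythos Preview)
Your proposal is correct and takes essentially the same approach as the paper: the paper gives no detailed proof of this theorem at all, simply stating that it follows from re-examining the proof in \cite{CS11} ``with few modification,'' which is precisely what you outline. Your Step~1 illustration of how the $b_a$'s cancel in first differences, and your observation that the only surviving contribution is the additive $|\inf_a b_a|$ from the zero-order evaluation $F(0)=\inf_a b_a$, capture exactly the bookkeeping the paper has in mind; one minor caveat is that in the viscosity setting the infimum need not be attained, so your choice of $a^*(x)$ should be replaced by an $\varepsilon$-optimal index, but this is routine and does not affect the argument.
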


The recursive Evans-Krylov theorem we are going to show is the following.

\begin{thm}\label{thm:E-K-3}
Assume that every $b_a$ is a constant, $K_a(y)\in\mathscr L_2 (\lda,\Lda, \sigma)$ with $2>\sigma\ge\sigma_0>1$. For each $m\in \mathbb{N}\cup\{0\}$,  let $\{v_\ell\}_{\ell=0}^m$ be a sequence of functions satisfying \be\label{eq:main}
\inf_{a\in\mathcal A} \left\{\int_{\R^n} \sum_{\ell=0}^j \rho^{-(j-\ell)(\sigma+\al)} \delta v_\ell(\rho^{j-\ell}x,\rho^{j-\ell}y)K^{(j)}_a(y)\,\ud y+\rho^{-j\al}b_a\right\}=0\quad\mbox{in }B_5
\ee
in viscosity sense for all $0\le j\le m$, where $K^{(j)}_a(x)=\rho^{j(n+\sigma)}K_a(\rho^j x)$, $\rho\in (0,1)$, $\al\in (0,\bar \al)$. Suppose that$\|v_\ell\|_{L^\infty(\R^n)}\le 1$ for all $\ell$ and $|\inf_{a\in \mathcal A}b_a|\le 1$. Then, $v_\ell\in C^{\sigma+\bar\al}(B_1)$, and there exist constants $C>0$ and $\rho_0\in (0,1/100)$, both of which depend only on $n,\sigma_0,\lda $, $ \Lda$, $\bar \al$ and $\al$, such that if $\rho\le\rho_0$ then we have
\be\label{eq:E-K-3}
\|v_\ell\|_{C^{\sigma+\bar\al}(B_{1})}\le C \quad \forall~ \ell= 0,1,\dots,m.
\ee
\end{thm}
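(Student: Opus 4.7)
The plan is to argue by induction on $j=0,1,\dots,m$, using Theorem \ref{thm:E-K-2} as the only Evans-Krylov input. The base case $j=0$ is immediate: \eqref{eq:main} reduces to $\inf_{a\in\mathcal A}\{L_a^{(0)}v_0(x)+b_a\}=0$ in $B_5$, where I set $L_a^{(\ell)}u(x):=\int_{\R^n}\delta u(x,y)K_a^{(\ell)}(y)\,\ud y$, so Theorem \ref{thm:E-K-2} gives $\|v_0\|_{C^{\sigma+\bar\al}(B_1)}\le 2N_{ek}$. For the inductive step I would assume $\|v_\ell\|_{C^{\sigma+\bar\al}(B_1)}\le C_0$ for $\ell<j$ with $C_0$ to be chosen, and derive the same bound for $v_j$. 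The substitution $z=\rho^{j-\ell}y$ rewrites the $\ell$-th summand in \eqref{eq:main} as $\rho^{(j-\ell)\sigma}L_a^{(\ell)}v_\ell(\rho^{j-\ell}x)$, so the level-$j$ equation takes the form
\[
\inf_{a\in\mathcal A}\Bigl\{L_a^{(j)}v_j(x)+G^a(x)\Bigr\}=0\ \text{in }B_5,\quad G^a(x):=\sum_{\ell=0}^{j-1}\rho^{-(j-\ell)\al}L_a^{(\ell)}v_\ell(\rho^{j-\ell}x)+\rho^{-j\al}b_a.
\]

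The key structural identity is $G^a(x)=\rho^{-\al}\bigl[\text{level-}(j{-}1)\text{ left-hand side in \eqref{eq:main} at }\rho x\bigr]$, so the level-$(j-1)$ equation forces $\inf_a G^a(x)=0$ throughout $B_5$. Splitting $G^a(x)=\tilde b_a+\hat G^a(x)$ with $\tilde b_a:=G^a(0)$ and $\hat G^a(0)=0$ gives a constant-in-$x$ piece satisfying $\inf_a\tilde b_a=0$, which fits the $b_a$-slot of Theorem \ref{thm:E-K-2}, and a genuine $a$-dependent perturbation $\hat G^a$. The inductive hypothesis combined with the $\mathscr L_2$ bounds on $K_a^{(\ell)}$ yields $[L_a^{(\ell)}v_\ell]_{C^{\bar\al}(B_1)}\le CC_0$ uniformly in $a$, and rescaling by $\rho^{j-\ell}$ together with $\bar\al>\al$ gives
\[
\bigl[L_a^{(\ell)}v_\ell(\rho^{j-\ell}\cdot)\bigr]_{C^\al(B_1)}\le 2^{\bar\al-\al}\rho^{(j-\ell)\bar\al}\,[L_a^{(\ell)}v_\ell]_{C^{\bar\al}(B_1)}\le CC_0\,\rho^{(j-\ell)\bar\al}.
\]
Multiplying by $\rho^{-(j-\ell)\al}$ and summing the resulting geometric series gives $[\hat G^a]_{C^\al(B_1)}\le C_*C_0\,\rho^{\bar\al-\al}$ uniformly in $j$ and $a$, which is small when $\rho$ is small.

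The remaining and central step is a quantitative Evans-Krylov statement with $a$-dependent forcing: if $v_j$ is a bounded viscosity solution of $\inf_a\{L_a^{(j)}v_j+\tilde b_a+\hat G^a(x)\}=0$ in $B_5$ with $\inf_a\tilde b_a=0$ and $\sup_a[\hat G^a]_{C^\al(B_1)}\le\delta$, then $\|v_j\|_{C^{\sigma+\bar\al}(B_1)}\le N_{ek}+C\delta$. I would derive this by retracing the CS11 proof of Theorem \ref{thm:E-K-2}: at every iterated rescaling, decompose $\hat G^a$ into its value at the current base point (absorbed into the $b_a$-slot) plus a residual whose $C^\al$ norm has shrunk by the rescaling factor, and apply the CS11 cutoffs that control the tail of $\delta v_j$ at infinity compatibly with this decomposition, so that the perturbation $\delta$ enters the final estimate only linearly. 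Substituting $\delta=C_*C_0\rho^{\bar\al-\al}$ yields $\|v_j\|_{C^{\sigma+\bar\al}(B_1)}\le N_{ek}+CC_*C_0\,\rho^{\bar\al-\al}$; choosing $\rho_0\in(0,1/100)$ small enough that $CC_*\rho_0^{\bar\al-\al}\le 1/2$ and setting $C_0:=2N_{ek}$ closes the induction.

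The main obstacle is precisely this adaptation of the CS11 argument to an $a$-dependent H\"older forcing $\hat G^a(x)$. Standard nonlocal Evans-Krylov arguments (including Theorem \ref{thm:E-K-2}) accommodate only constants $b_a$, whereas here $\hat G^a$ couples $a$ and $x$ simultaneously; this coupling interacts delicately with the nonlocal cutoffs used by CS11 to tame the tail of $\delta u$ at infinity, and both $v_j$ and $\hat G^a$ must be decomposed at each scale in a compatible way so that the concavity structure driving the Evans-Krylov iteration survives. This is what the authors refer to as the delicate decomposition and cut-off arguments.
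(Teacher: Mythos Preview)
Your inductive setup, the identity $\inf_a G^a\equiv 0$, and the smallness $[\hat G^a]_{C^{\bar\al}}\lesssim C_0\rho^{\bar\al-\al}$ are all correct and match the paper. The gap is in the step you yourself flag as the main obstacle, and it is sharper than you indicate. In the CS11 scheme (and in the paper's Lemma~\ref{lem:6.1}), one convolves the equation with $\phi_k(y)=\chi_{\{|y|>1/k\}}K(y)\eta(y)$ and must control
\[
h_a*\phi_k(x)-\|\phi_k\|_{L^1}h_a(x)=\tfrac12\int_{B_{3/2}\setminus B_{1/k}}\delta h_a(x,y)\,K(y)\eta(y)\,\ud y
\]
uniformly in $k$, where $h_a$ is the forcing. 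If $h_a$ is only $C^{\bar\al}$ with $\bar\al<1<\sigma$, then $|\delta h_a(x,y)|\lesssim|y|^{\bar\al}$ and the integral behaves like $\int_{1/k}^{1} r^{\bar\al-\sigma-1}\,\ud r\sim k^{\sigma-\bar\al}\to\infty$. So an abstract $C^\al$ (or even $C^{\bar\al}$) forcing is \emph{not} enough to run this step; one needs the forcing to be $C^2$, or at least H\"older of order exceeding $\sigma$. Your plan to ``decompose $\hat G^a$ into its value at the current base point plus a residual whose $C^\al$ norm has shrunk'' does not help here, because the residual is still only H\"older of order $<\sigma$.

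The paper resolves this not by treating the forcing abstractly but by exploiting its specific origin. Since $\hat G^a$ is built from $L_a^{(\ell)}v_\ell$, one splits each $v_\ell$ (independently of $a$) as $V_\ell^{(1)}+V_\ell^{(2)}$, where $V_\ell^{(1)}\in C^{\sigma+\bar\al}_c(B_{1/2})$ equals $v_\ell-p_\ell$ near $0$ (so $|V_\ell^{(1)}(x)|\lesssim|x|^{\sigma+\bar\al}$) and $V_\ell^{(2)}$ equals the Taylor polynomial $p_\ell$ near $0$. The rescaled sum $R^{(1)}_\rho$ of the $V_\ell^{(1)}$'s is then absorbed into the unknown, $\tilde v_{i+1}:=v_{i+1}+R^{(1)}_\rho$, and the residual forcing becomes $h_a=L_a^{(i+1)}R^{(2)}_\rho$, which is $C^2$ because each $V_\ell^{(2)}$ is a polynomial near the origin (Lemma~\ref{lem:derivative-1}). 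The polynomial growth $|R^{(1)}_\rho(x)|\lesssim\rho^{\bar\al-\al}|x|^{\sigma+\bar\al}$ is what makes the tail estimates for $\tilde v_{i+1}$ go through. This is precisely the ``delicate decomposition and cut-off'' the paper alludes to; without it the convolution step fails, and your sketch does not supply an alternative mechanism.
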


The rest of this section will be devoted to proving Theorem \ref{thm:E-K-3}. The regularity of $v_{i+1}$ follows from the Evans-Krylov theorem in \cite{CS11}. But if one applies the estimate (1.4) in \cite{CS11} to $v_\ell$ directly, one will get their $C^{\sigma+\bar\al}$ estimates depending on $\ell$ and $\rho$.  Our goal is to prove the estimate \eqref{eq:E-K-3} which is independent of both $\ell$ and $\rho$. 

A constant $C$ is said to be a universal constant if $C$ only depends on $n,\sigma_0,\lda $, $ \Lda, \al$ and $\bar\al$. Throughout this section, all the constants denoted as $C$ will be universal constants, and it may vary from lines to lines.

Let $M>>1$  be a universal constant which will be fixed later. Replacing $v_\ell$ by $v_\ell/M$, we may assume that
\[
\|v_\ell\|_{L^\infty(\R^n)}\le 1/M \quad  \mbox{and} \quad |\inf_{a\in \mathcal A}b_a|\le 1/M.
\]
Then our goal is to show that 
\[
\|v_\ell\|_{C^{\sigma+\bar\al}(B_{1})}\le 1 \quad \forall~ \ell=0,1,\cdots,m.
\]
The proof is by induction on $m$. When $m=0$, then by Theorem \ref{thm:E-K-2}, \eqref{eq:E-K-3} holds for $M=2N_{ek}$. We assume that Theorem \ref{thm:E-K-3} holds up to $m= i$ for some $i\ge 0$, and we are going to show that it holds for $i+1$ as well.

It follows from the induction hypothesis and the $i+1$ equations for $v_0,\dots,v_i$ that
\[
\|v_\ell\|_{C^{\sigma+\bar\al}(B_{1})}\le 1,\quad \forall~\ell=0,1,\dots,i.
\]
We are going to show
\be \label{eq:ek v i+1}
\|v_{i+1}\|_{C^{\sigma+\bar\al}(B_{1})}\le 1.
\ee

To illustrate the idea of our proof, let us first consider the second order fully nonlinear elliptic equations
\be\label{eq:2nd ek}
F(D^2 u):=\inf_{k\in\mathcal K} a_{ij}^{(k)}u_{ij}=0\quad\mbox{in }B_5,
\ee
where $\mathcal K$ is an index set, and $\lda I\le (a_{ij}^{(k)})\le \Lda I$ for all $k\in\mathcal K$. By the Evans-Krylov theorem, for every viscosity solution $u$ of \eqref{eq:2nd ek}, we have
\[
\|u\|_{C^{2+\bar\al}(B_1)}\le N_{ek}\|u\|_{L^\infty(B_5)}.
\]
Suppose that there exists a sequence of functions $\{v_\ell\}_{\ell=0}^m$ satisfying
\[
F\Big(\sum_{\ell=0}^j D^2\big(\rho^{-(j-\ell)(2+\al)}v_\ell(\rho^{j-\ell} x)\big)\Big)=0\quad\mbox{in }B_5
\]
in viscosity sense for all $0\le j\le m$, and $\|v_\ell\|_{L^\infty(B_5)}\le 1/M$ for all $\ell$. Suppose that up to $m=i$ for some $i\ge 0$ there holds
\[
\|v_\ell\|_{C^{2+\bar\al}(B_1)}\le 1\quad\mbox{for all }\ell=0,1,\dots,m.
\]
We are going to show this holds for $m=i+1$ as well.  For $\ell=0,\dots, i$, we let $P_\ell$ be the second order Taylor expansion polynomial of $v_\ell$ at $x=0$. Let
\[
\tilde v_{i+1}=v_{i+1}+\sum_{\ell=0}^{i} \rho^{-(i+1-\ell)(2+\al)}(v_\ell-P_\ell)(\rho^{i+1-\ell} x).
\]
Then
\[
G(D^2 \tilde v_{i+1}):=F\Big(D^2 \tilde v_{i+1}+\sum_{\ell=0}^i D^2\big(\rho^{-(i+1-\ell)(2+\al)}P_\ell(\rho^{i+1-\ell} x)\big)\Big)=0\quad\mbox{in }B_5.
\]
It is clear that $G(\cdot)$ is uniformly elliptic and concave. Since,
\be\label{eq:hessian constant}
\sum_{\ell=0}^i D^2\big(\rho^{-(i+1-\ell)(2+\al)}P_\ell(\rho^{i+1-\ell} x)\big) \mbox{ is a constant matrix,}
\ee
and 
\be\label{eq:F0}
F\Big(\sum_{\ell=0}^i D^2\big(\rho^{-(i+1-\ell)(2+\al)}P_\ell(\rho^{i+1-\ell} x)\big)\Big)=0,
\ee
we have $G(0)=0$. By the Evans-Krylov theorem, 
\[
\|\tilde v_{i+1}\|_{C^{2+\bar\al}(B_1)}\le N_{ek}\|\tilde v_{i+1}\|_{L^\infty(B_5)}.
\]
Since
\[
\begin{split}
&\|\sum_{\ell=0}^{i} \rho^{-(i+1-\ell)(2+\al)}(v_\ell-P_\ell)(\rho^{i+1-\ell} x)\|_{L^\infty(B_5)}\\
&\le 5^{2+\bar\al}\sum_{\ell=0}^i \rho^{(i+1-\ell)(\bar\al-\al)}\le \frac{5^3\rho^{\bar\al-\al}}{1-\rho^{\bar\al-\al}}
\end{split}
\]
and
\[
\begin{split}
&\|\sum_{\ell=0}^{i} \rho^{-(i+1-\ell)(2+\al)}(v_\ell-P_\ell)(\rho^{i+1-\ell} x)\|_{C^{2+\bar\al}(B_1)}\\
&\le 4\cdot 5^{2+\bar\al}\sum_{\ell=0}^i \rho^{(i+1-\ell)(\bar\al-\al)}\le \frac{5^4 \rho^{\bar\al-\al}}{1-\rho^{\bar\al-\al}},
\end{split}
\]
it follows that
\[
\|v_{i+1}\|_{C^{2+\bar\al}(B_1)}\le N_{ek}\Big( 1/M+ \frac{5^3}{1-\rho^{\bar\al-\al}}\rho^{\bar\al-\al}\Big)+ \frac{5^4}{1-\rho^{\bar\al-\al}}\rho^{\bar\al-\al}\le1
\]
if we choose $M$ sufficiently large and $\rho_0$ sufficiently small.

From this proof for the second order case, we see that the idea is to decompose $v_\ell$ as $(v_\ell-P_\ell)+P_\ell$, and apply Evans-Krylov theorem to the equation for $\tilde v_{i+1}$ which is $v_{i+1}$ plus those rescaled $(v_\ell-P_\ell)$.  In this step, we used \eqref{eq:hessian constant} and \eqref{eq:F0}.

In the nonlocal fully nonlinear case \eqref{eq:main}, we are going to use the same idea of decomposing $v_\ell$ and studying the equation of $\tilde v_{i+1}$. However, there is a difficulty that $\delta P_\ell(x,y)K(y)$ is not integrable and $\int_{\R^n}\delta \tilde P_\ell(x,y)K(y)\ud y$ will never be a constant for any cut-off $\tilde P_\ell$ of $P_\ell$. Thus, we are not be able to use the Evans-Krylov theorem proved in \cite{CS11}. Instead, we are going to employ the proofs in \cite{CS11} to prove the $C^{\sigma+\bar\al}$ estimate for $v_{i+1}$. A delicate part is that we need to decompose $v_\ell$ in an appropriate way. We start with some preliminaries in the following.

\subsection{Preliminaries}
For a kernel $K(y)$, we denote
\[
Lu(x)=\int_{\R^n} \delta u(x,y)K(y)\,\ud y,
\]
We will also say $L\in\mathscr L_2 (\lda,\Lda, \sigma)$ (or $\mathscr L_0 (\lda,\Lda, \sigma)$) if $K\in\mathscr L_2 (\lda,\Lda, \sigma)$ (or $\mathscr L_0 (\lda,\Lda, \sigma)$).
\begin{lem}\label{lem:derivative-1}
Suppose that $u\in C^{4}(B_2)\cap L^\infty(\R^n)$ and $K(y)\in\mathscr L_2 (\lda,\Lda, \sigma)$. Then
\[
\|Lu\|_{C^2(B_1)}\le C (\|u\|_{C^{4}(B_2)}+\|u\|_{L^\infty(\R^n)}),
\]
where $C$ is a positive constant depending only on $\al$, $\sigma_0$ and $\Lda$.
\end{lem}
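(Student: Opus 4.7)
The plan is to split $Lu$ into a near-field and a far-field piece via a smooth cut-off and treat the two by different techniques. Fix a smooth radial cut-off $\chi$ with $\chi\equiv 1$ on $B_{1/2}$ and $\chi\equiv 0$ outside $B_1$, and write $Lu=L_1u+L_2u$ where
\[
L_1u(x)=\int_{\R^n}\delta u(x,y)K(y)\chi(y)\,\ud y,\qquad L_2u(x)=\int_{\R^n}\delta u(x,y)K(y)(1-\chi(y))\,\ud y.
\]

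For $L_1u$, my approach is to differentiate under the integral sign in $x$. For $x\in B_1$ and $y\in\mathrm{supp}\,\chi\subset B_1$ we have $x\pm y\in B_2$, so $\delta u(\cdot,y)$ is $C^2$ on $B_1$, and the Taylor expansion $\delta u(x,y)=y^T\nabla^2u(x)y+O(|y|^4\|u\|_{C^4(B_2)})$ together with the analogous expansions of $\nabla_x^k\delta u(x,y)$ for $k=1,2$ yields $|\nabla_x^k\delta u(x,y)|\le|y|^2\|u\|_{C^{k+2}(B_2)}$. Combined with the pointwise bound $K(y)\le(2-\sigma)\Lda|y|^{-n-\sigma}$ from \eqref{eq:elliptic}, the integrand is dominated by $C(2-\sigma)|y|^{2-n-\sigma}\|u\|_{C^{k+2}(B_2)}$, whose integral on $B_1$ is uniformly bounded in $\sigma\in[\sigma_0,2)$ thanks to the cancellation $(2-\sigma)\int_0^1 r^{1-\sigma}\,\ud r=1$.

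For $L_2u$ the difficulty is that $u$ is only $L^\infty$ outside $B_2$, so $\delta u$ is not differentiable in $x$ on the whole support. I would instead exploit $K(-y)=K(y)$ and $\chi(-y)=\chi(y)$ to rewrite
\[
L_2u(x)=2\int_{\R^n}u(x+y)K(y)(1-\chi(y))\,\ud y-c_0u(x),\qquad c_0=2\int_{\R^n}K(y)(1-\chi(y))\,\ud y,
\]
where $c_0$ is a universally bounded constant since the integrand is supported on $|y|\ge 1/2$. The pointwise term $c_0u(x)$ contributes at most $C\|u\|_{C^2(B_2)}$ to $\|L_2u\|_{C^2(B_1)}$. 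For the convolution term I would change variables $z=x+y$ to obtain $\int u(z)K(z-x)(1-\chi(z-x))\,\ud z$, whose $x$-dependence lies entirely in the kernel, which is smooth on $|z-x|\ge 1/2$. Differentiating up to twice in $x$ produces factors of $\nabla_y^k[K(1-\chi)]$ evaluated at $z-x$, and by \eqref{eq:kernel smooth1} together with the boundedness of $\nabla^k\chi$, these are bounded by $C|z-x|^{-n-\sigma-k}$, so $|\nabla_x^kL_2u(x)|\le C\|u\|_{L^\infty(\R^n)}\int_{|y|\ge 1/2}|y|^{-n-\sigma-k}\,\ud y$, uniformly bounded in $\sigma\in[\sigma_0,2)$.

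The main subtle point will be checking that all constants are uniform in $\sigma\in[\sigma_0,2)$, which is exactly the role of the $(2-\sigma)$ factor in \eqref{eq:elliptic} (near field) and of the uniform decay of $\nabla_y^iK$ in \eqref{eq:kernel smooth1} (far field). Summing the two estimates and adding the analogous $C^0$ and $C^1$ bounds on $B_1$ yields $\|Lu\|_{C^2(B_1)}\le C(\|u\|_{C^4(B_2)}+\|u\|_{L^\infty(\R^n)})$ with a constant depending only on $n$, $\sigma_0$ and $\Lda$, completing the proof.
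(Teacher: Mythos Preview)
Your argument is correct and follows the same near/far philosophy as the paper: treat the singular part by differentiating $u$ and the tail by differentiating the kernel. The only difference is where the cut-off is placed. You cut off in the integration variable $y$, splitting the integral itself into a near piece $L_1u$ (where $x\pm y\in B_2$, so the $C^4$ regularity of $u$ is available) and a far piece $L_2u$ (where $|y|\ge 1/2$, so the $\mathscr L_2$ smoothness of $K$ kicks in). The paper instead cuts off in the function, writing $u=\eta u+(1-\eta)u$ with $\eta\in C_c^\infty(B_{3/2})$; then $\eta u\in C^4_c(\R^n)$ so that $\partial_{ij}L(\eta u)=L(\partial_{ij}(\eta u))$ holds globally, while $(1-\eta)u$ vanishes on $B_{5/4}$ so that for $x\in B_1$ the operator reduces to $\int_{\R^n\setminus B_{5/4}}(1-\eta(z))u(z)K(z-x)\,\ud z$, after which two $x$-derivatives land on $K$. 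Your decomposition avoids multiplying $u$ by a cut-off but requires the extra bookkeeping of the constant $c_0$ and the explicit convolution rewrite of $L_2u$; the paper's version avoids the $c_0u(x)$ term altogether since $(1-\eta)u$ already vanishes at $x$. Both routes use exactly the same ingredients from \eqref{eq:elliptic} and \eqref{eq:kernel smooth1} and yield constants uniform in $\sigma\in[\sigma_0,2)$ for the same reasons you identify.
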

\begin{proof}
Let $\eta\in C_c^\infty(B_{3/2})$ and $\eta\equiv 1$ in $B_{5/4}$. Then
\[
 Lu=L(\eta u)+ L((1-\eta)u).
\]
It is clear that $\pa_{ij}(L(\eta u))= L(\pa_{ij}(\eta u))$, from which it follows that
\[
\|L(\eta u)\|_{C^2(B_1)}\le C (\|u\|_{C^{4}(B_2)}+\|u\|_{L^\infty(\R^n)}).
\]
For the second term, we have $1-\eta(x)=0$ if $x\in B_1$, and thus
\[
\begin{split}
 L((1-\eta)u)(x)&=\int_{\R^n} (1-\eta(x+y))u(x+y)K(y)\ud y\\
 &=\int_{\R^n\setminus B_{5/4}} (1-\eta(y))u(y)K(y-x)\ud y.
 \end{split}
\]
The lemma follows immediately since $K(y)\in\mathscr L_2 (\lda,\Lda, \sigma)$.
\end{proof}

\begin{lem}\label{lem:derivative-2}
Suppose that $u\in C^{\sigma+\al}(\R^n)$, $0\le K(y)\le (2-\sigma)\Lda |y|^{-n-\sigma}$ and $K(y)=K(-y)$. Then
\[
\|Lu\|_{C^\al(\R^n)}\le C \|u\|_{C^{\sigma+\al}(\R^n)},
\]
and $C$ is a positive constant depending only on $\al$, $\sigma_0$ and $\Lda$.
\end{lem}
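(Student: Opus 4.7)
The plan is to prove the two estimates $\|Lu\|_{L^\infty(\R^n)}\le C\|u\|_{C^{\sigma+\al}(\R^n)}$ and $[Lu]_{C^\al(\R^n)}\le C\|u\|_{C^{\sigma+\al}(\R^n)}$ separately, each by splitting the $y$-integral at a suitable scale and combining the symmetric second-difference structure of $\delta u$ with the pointwise bound $K(y)\le(2-\sigma)\Lda|y|^{-n-\sigma}$.

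For the $L^\infty$ bound I would write $Lu(x)=\int_{B_1}\delta u(x,y)K(y)\,\ud y+\int_{\R^n\setminus B_1}\delta u(x,y)K(y)\,\ud y$. On $B_1$ the key input is the Taylor-remainder estimate $|\delta u(x,y)|\le C\|u\|_{C^{\sigma+\al}}|y|^{\sigma+\al}$: immediate for $\sigma+\al\le 1$, and for $\sigma+\al\in(1,2]$ obtained by writing $\delta u(x,y)=[u(x+y)-u(x)-y\cdot\nabla u(x)]+[u(x-y)-u(x)+y\cdot\nabla u(x)]$ and applying Taylor's theorem to each bracket. Combined with $K$ this yields an integrand of order $|y|^{\al-n}$, integrable over $B_1$. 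On the tail $\R^n\setminus B_1$ the crude $|\delta u(x,y)|\le 4\|u\|_{L^\infty}$ and the integrable decay of $K$ close the estimate.

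For the Hölder seminorm, fix $x\neq z$ with $r=|x-z|$; the case $r\ge 1$ reduces to the $L^\infty$ bound, so assume $r<1$. Setting $\psi_y(s):=\delta u(s,y)$ we have $Lu(x)-Lu(z)=\int(\psi_y(x)-\psi_y(z))K(y)\,\ud y$, which I split into the integrals over $|y|\le r$ and $|y|>r$. The inner piece is controlled by the triangle inequality together with the bound $|\psi_y|\le C\|u\|_{C^{\sigma+\al}}|y|^{\sigma+\al}$ from the $L^\infty$ step, producing the desired $Cr^\al$.

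The tail is the delicate step. The crucial observation is that $\psi_y(s)$ is itself a symmetric second difference of $u$ in the $s$-variable, so for $\sigma+\al\in(1,2]$ its gradient $\nabla_s\psi_y(s)=\nabla u(s+y)+\nabla u(s-y)-2\nabla u(s)$ is a second difference of $\nabla u\in C^{\sigma+\al-1}$ with $\|\nabla\psi_y\|_{L^\infty}\le C\|u\|_{C^{\sigma+\al}}|y|^{\sigma+\al-1}$. The mean value inequality then produces the mixed bound $|\psi_y(x)-\psi_y(z)|\le C\|u\|_{C^{\sigma+\al}}\,r\,|y|^{\sigma+\al-1}$, and integration against $|y|^{-n-\sigma}$ over $|y|\ge r$ yields $r\int_r^\infty t^{\al-2}\,\ud t=Cr^\al$ (using $\al<1$). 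For $\sigma+\al\le 1$ the direct estimate $|\psi_y(x)-\psi_y(z)|\le 4[u]_{C^{\sigma+\al}}r^{\sigma+\al}$ together with the tail of $K$ gives the same bound. The main obstacle is identifying this mixed tail estimate when $\sigma+\al>1$: a Lipschitz bound in $s$ alone integrates to $r^{1-\sigma}$, which is larger than $r^\al$, while the Taylor bound $|y|^{\sigma+\al}$ in $y$ alone is not integrable against $|y|^{-n-\sigma}$ at infinity, so one must exploit the second-difference structure in both the $s$ and $y$ variables simultaneously.
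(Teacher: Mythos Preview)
Your strategy matches the paper's --- split the $y$-integral at $r=|x-z|$, use a Taylor bound on the inner piece and a mixed bound on the tail --- but you only treat $\sigma+\al\le 2$, whereas the paper explicitly singles out $\sigma+\al\ge 2$ as ``the most difficult case'' and this is the regime in which the lemma is actually used later (where $\sigma+\bar\al>2$). The omission is not cosmetic: the pointwise bound $|\delta u(x,y)|\le C\|u\|_{C^{\sigma+\al}}|y|^{\sigma+\al}$ that drives both your $L^\infty$ step and your inner piece is \emph{false} when $\sigma+\al>2$. The second-order Taylor terms of $u(x+y)$ and $u(x-y)$ add rather than cancel, leaving $y^T\nabla^2 u(x)y$, so the best pointwise estimate is $|\delta u(x,y)|\le C\|u\|_{C^{\sigma+\al}}|y|^2$ for $|y|\le 1$. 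This is harmless for the $L^\infty$ bound, but for the inner piece of the H\"older seminorm your triangle inequality then gives only $\int_{B_r}|y|^{2-n-\sigma}\,\ud y\sim r^{2-\sigma}$, which is strictly larger than $r^\al$ precisely when $\sigma+\al>2$.

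The fix, carried out in the paper's term $I_1$, is to subtract off the Hessians before estimating:
\[
\delta u(x,y)-\delta u(z,y)=y^T\bigl(\nabla^2 u(x)-\nabla^2 u(z)\bigr)y+O(|y|^{\sigma+\al}),
\]
where the Hessian difference is bounded by $[\nabla^2 u]_{C^{\sigma+\al-2}}\,r^{\sigma+\al-2}|y|^2$; both terms then integrate over $B_r$ against $K$ to $Cr^\al$. Your tail argument, by contrast, already extends to $\sigma+\al>2$ with no change: $\nabla_s\psi_y$ is a symmetric second difference of $\nabla u\in C^{\sigma+\al-1}$, and the bound $|\delta(\nabla u)(s,y)|\le C|y|^{\sigma+\al-1}$ holds for every $\sigma+\al-1\in(0,2)$ by exactly the Taylor argument you used one order down. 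The paper obtains the same mixed bound $|x|\,|y|^{\sigma+\al-1}$ for its $I_2$ via the one-sided first-order Taylor remainder of $\nabla u$ rather than the symmetric second difference, but the two routes are equivalent.
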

\begin{proof}
First of all, it is clear that
\[
\|Lu\|_{L^\infty(\R^n)}\le C \|u\|_{C^{\sigma+\al}(\R^n)}.
\]
In the following, we are going to estimate the $C^\al$ norm of $Lu$. We first consider that $\sigma+\al\ge 2$, which is the most difficult case. Since
\[
 \begin{split}
  L u(x) &=2\int_{\R^n} (u(x+y)-u(x)-\nabla u(x)y)K(y)\ud y\\
L u(0) &=2\int_{\R^n} (u(y)-u(0)-\nabla u(0)y)K(y)\ud y,
 \end{split}
\]
we have that, for $r=|x|$
\[
 \begin{split}
&\frac{Lu(x)-L(0)}{2}\\&=\int_{B_r} ((u(x+y)-u(x)-\nabla u(x)y)-(u(y)-u(0)-\nabla u(0)y))K(y)\ud y\\
&\quad +\int_{\R^n\setminus B_r} ((u(x+y)-u(x)-\nabla u(x)y)-(u(y)-u(0)-\nabla u(0)y))K(y)\ud y\\
&\quad =I_1+I_2.
 \end{split}
\]
For $I_1$, we have that
\[
 \begin{split}
I_1&=\int_{B_r} (u(x+y)-u(x)-\nabla u(x)y-\frac 12 y^T\nabla^2 u(x)y)K(y)\ud y\\
&\quad -\int_{B_r} (u(y)-u(0)-\nabla u(0)y-\frac 12 y^T\nabla^2 u(0)y)K(y)\ud y\\
&\quad + \frac 12\int_{B_r} ( y^T\nabla^2 u(x)y-y^T\nabla^2 u(0)y)K(y)\ud y,
 \end{split}
\]
and thus
\[
 \begin{split}
|I_1|&\le 2\int_{B_r} \|u\|_{C^{\sigma+\al}(\R^n)} |y|^{\sigma+\al} K(y) \ud y + \|u\|_{C^{\sigma+\al}(\R^n)} r^{\sigma+\al-2}\int_{B_r} |y|^2K(y)\ud y\\
&\le (4\al^{-1}+\Lda)\|u\|_{C^{\sigma+\al}(\R^n)}r^\al.
 \end{split}
\]
For $I_2$, it follows from mean value theorem that
\[
 \begin{split}
|(u(x+y)-u(x)-\nabla u(x)y)-(u(y)-u(0)-\nabla u(0)y)|\le \|u\|_{C^{\sigma+\al}(\R^n)} |x||y|^{\sigma+\al-1}.
 \end{split}
\]
Thus,
\[
 |I_2|\le \|u\|_{C^{\sigma+\al}(\R^n)} |x|\int_{\R^n\setminus B_r}|y|^{\sigma+\al-1}K(y)\ud y\le (1-\al)^{-1}\|u\|_{C^{\sigma+\al}(\R^n)} |x|^\al.
\]
For the case $\sigma+\al<2$, one can prove them similarly and we omit its proof here.
\end{proof}

\begin{lem}\label{lem:derivative-3}
Suppose that $u\in C^{\sigma+\al}(B_2)\cap L^\infty(\R^n)$, $0\le K(y)\le (2-\sigma)\Lda |y|^{-n-\sigma}$, $K(y)=K(-y)$ and $|\nabla K(y)|\le \Lda |y|^{-n-\sigma-1}$. Then
\[
\|Lu\|_{C^\al(B_1)}\le C (\|u\|_{C^{\sigma+\al}(B_2)}+\|u\|_{L^\infty(\R^n)}),
\]
where
\[
L u =\int_{\R^n} \delta u(x,y)K(y)\ud y,
\]
and $C$ is a positive constant depending only on $\al$, $\sigma_0$ and $\Lda$.
\end{lem}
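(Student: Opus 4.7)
The plan is to reduce to Lemma \ref{lem:derivative-2} by a cutoff decomposition, handling the piece supported away from $B_1$ by direct kernel estimates. Concretely, pick $\eta\in C_c^\infty(B_{7/4})$ with $\eta\equiv 1$ on $B_{3/2}$, and write $u=\eta u+(1-\eta)u$, so that $Lu=L(\eta u)+L((1-\eta)u)$.

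For the first piece, $\eta u\in C^{\sigma+\al}(\R^n)$ (since $\eta$ has compact support and is smooth), and a standard product rule for H\"older spaces gives
\[
\|\eta u\|_{C^{\sigma+\al}(\R^n)}\le C\bigl(\|u\|_{C^{\sigma+\al}(B_2)}+\|u\|_{L^\infty(\R^n)}\bigr).
\]
Lemma \ref{lem:derivative-2} then yields $\|L(\eta u)\|_{C^\al(\R^n)}\le C(\|u\|_{C^{\sigma+\al}(B_2)}+\|u\|_{L^\infty(\R^n)})$, which is what we want.

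For the second piece, observe that for $x\in B_1$ we have $(1-\eta)(x)=0$, so $\delta((1-\eta)u)(x,y)=(1-\eta)u(x+y)+(1-\eta)u(x-y)$. Changing variables and using $K(-y)=K(y)$ gives the convolution-type representation
\[
L((1-\eta)u)(x)=2\int_{\R^n}(1-\eta(z))u(z)K(z-x)\,\ud z,
\]
and the integrand is supported in $\{|z|\ge 3/2\}$, so $|z-x|\ge 1/2$ for $x\in B_1$. Hence $K(z-x)$ is smooth there with the pointwise gradient bound $|\nabla_x K(z-x)|\le \Lda|z-x|^{-n-\sigma-1}$, which is integrable in $z$ against the bounded density $(1-\eta)u$ (since $|z-x|\approx|z|$ for $|z|$ large). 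The mean value theorem then gives, for $x_1,x_2\in B_1$,
\[
|L((1-\eta)u)(x_1)-L((1-\eta)u)(x_2)|\le C\|u\|_{L^\infty(\R^n)}|x_1-x_2|,
\]
and a similar $L^\infty$ bound is immediate, so $\|L((1-\eta)u)\|_{C^{0,1}(B_1)}\le C\|u\|_{L^\infty(\R^n)}$, which controls the $C^\al$ norm since $\al<1$.

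The only nontrivial point is the product-rule estimate $\|\eta u\|_{C^{\sigma+\al}(\R^n)}\le C(\|u\|_{C^{\sigma+\al}(B_2)}+\|u\|_{L^\infty(\R^n)})$: this is standard but needs a moment of care when $\sigma+\al>2$ because one must check that the full $[\sigma+\al]$ derivatives of $\eta u$ are H\"older, which follows from interpolation and Leibniz, bounding lower derivatives of $u$ on $B_2$ by $\|u\|_{C^{\sigma+\al}(B_2)}+\|u\|_{L^\infty(\R^n)}$. No other obstacle arises, and the proof is essentially a localization of Lemma \ref{lem:derivative-2} in the same spirit as Lemma \ref{lem:derivative-1}.
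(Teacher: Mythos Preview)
Your proof is correct and follows essentially the same approach as the paper: the paper also cuts off with $\eta\in C_c^\infty(B_{3/2})$, $\eta\equiv1$ on $B_{5/4}$, applies Lemma~\ref{lem:derivative-2} to $L(\eta u)$, and handles $L((1-\eta)u)$ by the same change of variables and the gradient bound $|\nabla K(y)|\le \Lda|y|^{-n-\sigma-1}$. The only differences are cosmetic (your cutoff radii and the explicit mean value theorem argument versus the paper's one-line ``follows immediately'').
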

\begin{proof}
Let $\eta\in C_c^\infty(B_{3/2})$ and $\eta\equiv 1$ in $B_{5/4}$. Then
\[
 Lu=L(\eta u)+ L((1-\eta)u).
\]
It follows from Lemma \ref{lem:derivative-2} that
\[
\|L(\eta u)\|_{C^\al(B_1)}\le C \|\eta u\|_{C^{\sigma+\al}(\R^n)} \le C (\| u\|_{C^{\sigma+\al}(B_2)}+\|u\|_{L^\infty(\R^n)}).
\]
For the second term, we have $1-\eta(x)=0$ if $x\in B_1$, and thus
\[
 L((1-\eta)u)(x)=\int_{\R^n} (1-\eta(x+y))u(x+y)K(y)\ud y=\int_{\R^n\setminus B_{5/4}} (1-\eta(y))u(y)K(y-x)\ud y.
\]
The lemma follows immediately since $|\nabla K(y)|\le \Lda |y|^{-n-\sigma-1}$.
\end{proof}

\begin{lem}\label{lem:newapp} Let $v\in C_c^{\sigma+\bar \al}(B_{1/2})$ such that $\|v\|_{C_c^{\sigma+\bar \al}(B_{1/2})}\le 1$, and  $p(x)$ be the Taylor expansion polynomial of $v$ at $x=0$ with degree $[\sigma+\bar\al]$. For every $L\in \mathscr{L}_0(\lda,\Lda, \sigma)$, there exists $P\in C_c^\infty(B_{1/2})$ such that $P(x)=p(x)$ in $B_{1/4}$, $\|P\|_{C^{4}(B_{1/2})} \le C$ and
\[
L P(0) = L v(0),
\]
where $C$ is a positive constant depending only on $n,\lda,\Lda, \sigma_0$ and $\bar\al$.
\end{lem}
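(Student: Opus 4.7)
The plan is to modify the natural cutoff of the Taylor polynomial $p$ by a single-parameter bump so as to match the prescribed value $Lv(0)$, while keeping the $C^4$-norm universally bounded. Fix once and for all $\chi\in C_c^\infty(B_{1/2})$ with $\chi\equiv 1$ on $B_{1/4}$ and $0\le\chi\le 1$, and set $P_0:=p\chi$. Since $\|v\|_{C^{\sigma+\bar\al}(B_{1/2})}\le 1$ and the degree $[\sigma+\bar\al]$ of $p$ is bounded by a universal integer, all coefficients of $p$ are universally bounded; hence $P_0\in C_c^\infty(B_{1/2})$, $P_0=p$ on $B_{1/4}$, and $\|P_0\|_{C^4(B_{1/2})}\le C$.

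Next, estimate the discrepancy $\delta:=Lv(0)-LP_0(0)=L(v-P_0)(0)$. On $B_{1/4}$ one has $v-P_0=v-p$, and Taylor's theorem together with $\|v\|_{C^{\sigma+\bar\al}}\le 1$ yields $|(v-p)(y)|\le C|y|^{\sigma+\bar\al}$; combined with \eqref{eq:elliptic} the contribution of this region to the integral defining $L(v-P_0)(0)$ is bounded by $C(2-\sigma)\int_0^{1/4} r^{\bar\al-1}\,\ud r\le C(2-\sigma)$. On the annulus $B_{1/2}\setminus B_{1/4}$, $v-P_0$ is universally bounded and, using \eqref{eq:elliptic} together with $\sigma\ge\sigma_0$, the kernel integrates to at most $C(2-\sigma)$; outside $B_{1/2}$ both $v$ and $P_0$ vanish. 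Hence $|\delta|\le C(2-\sigma)$.

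Finally, introduce a fixed correcting bump: pick once and for all an even $\psi_0\in C_c^\infty(B_{1/2})$ with $\psi_0\equiv 0$ on $B_{1/4}$, $\psi_0\ge 0$, and $\psi_0\not\equiv 0$. Since $\psi_0(0)=0$ and $K$ is symmetric, $L\psi_0(0)=2\int \psi_0(y)K(y)\,\ud y$, and the lower bound in \eqref{eq:elliptic} gives $L\psi_0(0)\ge c\lda(2-\sigma)$ for a universal $c>0$. Setting
\[
\psi:=\frac{\delta}{L\psi_0(0)}\,\psi_0,\qquad P:=P_0+\psi,
\]
produces the desired object: the $(2-\sigma)$ weights cancel in the quotient, so $\|\psi\|_{C^4(B_{1/2})}\le C$ universally; $\psi\equiv 0$ on $B_{1/4}$, so $P\in C_c^\infty(B_{1/2})$ with $P=p$ on $B_{1/4}$ and $\|P\|_{C^4(B_{1/2})}\le C$; and by linearity $LP(0)=LP_0(0)+L\psi(0)=Lv(0)$. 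The only essential subtlety, and the source of the delicacy, is this precise tracking of the $(2-\sigma)$ weights: both the numerator $|\delta|$ and the denominator $L\psi_0(0)$ scale like $2-\sigma$, and it is their exact cancellation that prevents the constant $C$ from degenerating as $\sigma\to 2$.
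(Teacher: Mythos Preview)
Your proof is correct and follows essentially the same route as the paper: construct $P$ as a cutoff of $p$ plus a scalar multiple of a fixed bump supported away from the origin, and show the scalar is universally bounded because both the discrepancy $L(v-P_0)(0)$ and the normalization $L\psi_0(0)$ carry the same factor $(2-\sigma)$. The paper's $\eta$ and $h$ play the role of your $\chi$ and $\psi_0$; the only cosmetic differences are the precise supports of the cutoffs and your (harmless, in fact unnecessary given $K(y)=K(-y)$) requirement that $\psi_0$ be even.
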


\begin{proof} Let $\eta\in C_c^\infty(B_{1/3})$ be such that $\eta\equiv 1$ in $B_{1/4}$. Let  $h(x)\in C^4_c(B_{1/2}\setminus \bar B_{1/3})$ be such that $h(x)=1$ for $B_{11/24}\setminus B_{9/24}$ and $0\le h\le 1$ in $B_{1/2}$. Let $P(x)=\eta(x)p(x)+t\cdot h(x)$, where $t=L(v-\eta p)(0)/Lh(0)$. Then we are left to show that $|t|\le C$, which depends only on $n,\lda,\Lda, \sigma_0$ and $\bar\al$. On one hand, it is clear that
\[
Lh(0)\ge (2-\sigma)C^{-1}.
\]
On the other hand, since $|v(x)-p(x)|\le C |x|^{\sigma+\bar\al}$ for $x\in B_{1/4}$, we have
\[
\begin{split}
|L(v-\eta p)(0)|&=\int_{B_{1/4}}|v(y)-p(y)|K(y)\ud y+\int_{B_{1/2}\setminus B_{1/4}}|v(y)-\eta(y)p(y)|K(y)\ud y\\
&\le C(2-\sigma)\int_{B_{1/4}}|y|^{\sigma+\bar\al-n-\sigma}\ud y +C(2-\sigma)\\
&\le C(2-\sigma),
\end{split}
\]
from which it follows that $|t|\le C$.
\end{proof}

\subsection{Decompositions}

We shall adapt the proofs in \cite{CS11} with delicate decomposition and cut-off arguments indicated in Section \ref{sec:statement} to prove Theorem \ref{thm:E-K-3}.  Recall that we are left to show \eqref{eq:ek v i+1}.

For a function $v$, we denote $v_\rho(x)=\rho^{-(\sigma+\al)}v(\rho x)$. Set
\[
R(x)=\sum_{\ell=0}^i \rho^{-(i-\ell)(\sigma+\al)} v_\ell(\rho^{i-\ell}x).
\]
By \eqref{eq:main},
\[
 \inf_{a\in\mathcal{A}}\{L_a^{(i+1)}R_\rho (x)+\rho^{-(i+1)\al}b_a\}=0 \quad \mbox{in }B_{5/\rho},
\]
where $L^{(i+1)}_{a}$ is the linear operator with kernel $K_a^{(i+1)}\in\mathscr L_2(\lda,\Lda,\sigma)$.
Hence, there exists an $\bar a\in \mathcal{A}$ such that
\be \label{eq:new7}
0\le L^{(i+1)}_{\bar a} R_\rho(0) +\rho^{-(i+1)\al}b_{\bar a}< \rho^{\bar \al-\al}.
\ee

Let $\eta_0(x)=1$ in $B_{1/4}$ and $\eta_0\in C^\infty_c(B_{1/2})$ be a fixed cut-off function. Set
\[
v_\ell(x)=v_\ell\eta_0+v_\ell(1-\eta_0)=:v_\ell^{(1)}+v_\ell^{(2)}.
\]
Let $p_\ell(x)$ be the Taylor expansion polynomial of $v_\ell^{(1)}(x)$ at $x=0$ with degree $[\sigma+\bar\al]$. By Lemma \ref{lem:newapp}, there exists $P_\ell\in C_c^\infty(B_{1/2})$ such that $P_\ell(x)=p_\ell(x)$ in $B_{1/4}$, $\|P_\ell\|_{C^{4}(B_{1/2})} \le c_0$ (a universal constant, independent of $\ell$) and
\be\label{eq:gooddc}
L^{(\ell)}_{\bar a}P_\ell(0)=L^{(\ell)}_{\bar a} v_\ell^{(1)}(0).
\ee
Set
\[
v_\ell=(v_\ell^{(1)}-P_\ell)+ (v_\ell^{(2)}+P_\ell)=:V^{(1)}_{\ell}+V^{(2)}_{\ell}.
\]
We have
\be\label{eq:W1}
\begin{split}
&\|V_\ell^{(1)}\|_{L^\infty(\R^n)}+\|V_\ell^{(2)}\|_{L^\infty(\R^n)}\le c_0+1, \quad V_\ell^{(1)}(0)=0,\\
&V_\ell^{(1)}\in C^{\sigma+\bar\al}_c(B_{1/2}),\quad \|V_\ell^{(1)}\|_{C^{\sigma+\bar\al}(\R^n)}+ \|V_\ell^{(2)}\|_{C^{\sigma+\bar\al}(B_1)}\le 4^4(c_0+1),\\
 & V_\ell^{(1)}=v_\ell-p_\ell\mbox{ in }B_{1/4},\quad  V_\ell^{(2)}=p_\ell\mbox{ in }B_{1/4},\quad |V_\ell^{(1)}(x)|\le 4^4(c_0+1)|x|^{\sigma+\bar\al}\mbox{ in }\R^n.
\end{split}
\ee
Decompose $R(x)$ as
\[
R(x)=R^{(1)}(x)+R^{(2)}(x),
\]
where
\begin{align*}
R^{(1)}(x)&=\sum_{\ell=0}^i  \rho^{-(i-\ell)(\sigma+\al)}  V_\ell^{(1)}(\rho^{i-\ell}x)
\\ R^{(2)}(x)&=\sum_{\ell=0}^i  \rho^{-(i-\ell)(\sigma+\al)}  V_\ell^{(2)}(\rho^{i-\ell}x).
\end{align*}
By change of variables, we have that for each $a\in\mathcal A$,
\be\label{eq:change of variable of V}
\begin{split}
L^{(i+1)}_{a} R_\rho^{(1)}(x)=\sum_{\ell=0}^i \rho^{-(i+1-\ell)\al}(L_a^{(\ell)}V_\ell^{(1)}) (\rho^{i+1-\ell}x),\\
L^{(i+1)}_{a} R_\rho^{(2)}(x)=\sum_{\ell=0}^i \rho^{-(i+1-\ell)\al}(L_a^{(\ell)}V_\ell^{(2)}) (\rho^{i+1-\ell}x).
\end{split}
\ee
By \eqref{eq:new7} and \eqref{eq:gooddc}, we have
\be \label{eq:d1}
\begin{split}
L^{(i+1)}_{\bar a} R_\rho^{(1)}(0)&=0,\\
 0\le  L^{(i+1)}_{\bar a} R_\rho^{(2)}(0)+\rho^{-(i+1)\al}b_{\bar a}&=L^{(i+1)}_{\bar a} R_\rho(0)+\rho^{-(i+1)\al}b_{\bar a}\le \rho^{\bar \al-\al}.
\end{split}
\ee
It follows from Lemma \ref{lem:derivative-2}, \eqref{eq:change of variable of V}, \eqref{eq:d1} and \eqref{eq:W1} that
\begin{align}
|L^{(i+1)}_{\bar a} R_\rho^{(1)}(x)|&=|L^{(i+1)}_{\bar a} R^{(1)}_\rho(x)-L^{(i+1)}_{\bar a} R^{(1)}_\rho(0)| \nonumber  \\
& \le \sum_{\ell=0}^i \rho^{-(i+1-\ell)\al}|(L_a^{(\ell)}V_\ell^{(1)}) (\rho^{i+1-\ell}x)- (L_a^{(\ell)}V_\ell^{(1)})(0)|\nonumber\\&
\le  C |x|^{\bar\al} \sum_{\ell=0}^i  \rho^{(i+1-\ell)(\bar \al-\al)}\|V_\ell^{(1)}\|_{C^{\sigma+\bar\al}(\R^n)}\nonumber \\&
\le  C |x|^{\bar\al} \rho^{\bar \al-\al}\sum_{\ell=0}^\infty  \rho^{\ell(\bar \al-\al)}\nonumber \\&
\le  C \rho^{\bar \al-\al}|x|^{\bar\al} \quad \mbox{ for }x\in \R^n.
\label{eq:r1}
\end{align}
Similarly, it follows from Lemma \ref{lem:derivative-3}, \eqref{eq:change of variable of V}and \eqref{eq:W1} that
\begin{align}
|L^{(i+1)}_{\bar a} R^{(2)}_\rho(x)-L^{(i+1)}_{\bar a} R^{(2)}_\rho(0)| & \le \sum_{\ell=0}^i \rho^{-(i+1-\ell)\al}|(L_a^{(\ell)}V_\ell^{(2)}) (\rho^{i+1-\ell}x)- (L_a^{(\ell)}V_\ell^{(2)})(0)|\nonumber\\&
\le  C |x|^{\bar\al} \sum_{\ell=0}^i  \rho^{(i+1-\ell)(\bar \al-\al)}(\|V_\ell^{(2)}\|_{C^{\sigma+\bar\al}(B_1)}+\|V_\ell^{(2)}\|_{L^{\infty}(\R^n)})\nonumber \\&
\le  C \rho^{\bar \al-\al}|x|^{\bar\al} \quad \mbox{ for }x\in B_5.
\label{eq:r2}
\end{align}
Thus, by \eqref{eq:d1}, we have
\begin{align}
         \label{eq:new9}
|L^{(i+1)}_{\bar a} R_\rho^{(2)}(x)+\rho^{-(i+1)\al}b_{\bar a}|\le C \rho^{\bar \al-\al} (|x|^{\bar\al}+1)\quad  \mbox{ for }x\in B_5.
\end{align}
Let
\[
\tilde v_{i+1}=v_{i+1}+R^{(1)}_\rho.
\]
Hence, the equation of \eqref{eq:main} involving $v_{i+1}$ is 
\[
\inf_{a}\{ L^{(i+1)}_a (v_{i+1}+R_\rho)+\rho^{-(i+1)\al}b_a\}=0,
\]
which is equivalent to
\be \label{eq:new}
\inf_{a}\{L^{(i+1)}_a (\tilde v_{i+1}+R^{(2)}_\rho)+\rho^{-(i+1)\al}b_{a}\}=0\quad\mbox{in }B_5.
\ee
It follows from \eqref{eq:r1} and \eqref{eq:new9} that
\be \label{eq:lower}
\begin{split}
L^{(i+1)}_{\bar a}  v_{i+1}(x)&\ge -C \rho^{\bar \al-\al} \quad \mbox{in }B_5,\\
L^{(i+1)}_{\bar a} \tilde v_{i+1}(x)&\ge -C \rho^{\bar \al-\al} \quad \mbox{in }B_5,
\end{split}
\ee
where $C$ is a universal positive constant.

\subsection{$C^\sigma$ estimates}
Define the maximal operators
\[
\begin{split}
\mathcal M^+_0 u(x)=\sup_{K\in \mathscr L_0(\lda,\Lda,\sigma)}\int_{\R^n}\delta u(x,y)K(y)\ud y,\\
\mathcal M^+_2 u(x)=\sup_{K\in \mathscr L_2(\lda,\Lda,\sigma)}\int_{\R^n}\delta u(x,y)K(y)\ud y.
\end{split}
\]
And one can define the extremal operators $\mathcal M^-_0$ and $\mathcal M^-_2$ similarly. Let $\eta_1\in C_c^\infty(B_{4})$ be a smooth cut-off function such that $\eta_1\equiv 1$ in $B_{3}$. We write \eqref{eq:new} as
\be \label{eq:aux0}
\inf_{a\in \mathcal{A}} \{L^{(i+1)}_a \tilde v_{i+1}+h_a(x)+\rho^{-(i+1)\al}b_{a}\}=0 \quad \mbox{in }B_3,
\ee
where
\[
h_a(x):= \eta_1(x) L^{(i+1)}_a R^{(2)}_\rho(x).
\]

\begin{lem}\label{lem:6.1} Let $K$ be a symmetric kernel satisfying $0\le K(y)\le (2-\sigma)\Lda |y|^{-n-\sigma}$. Then for every bump function $\eta$ such that
\begin{align*}
0&\le \eta(x)\le 1\quad \mbox{in }\R^n,\\
\eta(x)&=\eta(-x)\quad \mbox{in }\R^n,\\
\eta(x)&=0 \quad \mbox{in }\R^n\setminus B_{3/2},
\end{align*}
we have
\[
\mathcal M^+_2\left(\int_{\R^n}\delta \tilde v_{i+1} (x,y)K(y)\eta(y)\,\ud y\right)\ge -C\rho^{2-\al} \quad \mbox{in }B_{3/2}.
\]
\end{lem}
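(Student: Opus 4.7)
The plan is to mimic the classical Evans--Krylov trick, in which the second derivative of a concave fully nonlinear equation is a supersolution of the linearized operator, but with directional second derivatives replaced by integration against the positive kernel $K(y)\eta(y)$. At each $x_0 \in B_{3/2}$ I would select (up to any prescribed accuracy) $a_0 = a_0(x_0) \in \mathcal{A}$ achieving the infimum in \eqref{eq:aux0} at $x_0$. Since $\eta$ is supported in $B_{3/2}$, for any $y$ in the support both $x_0 \pm y$ lie in $B_3$, where the infimum also equals zero; using $a_0$ as a competitor at $x_0\pm y$ and equality at $x_0$ yields the pointwise inequality
\[
0 \le L_{a_0}^{(i+1)}[\delta \tilde v_{i+1}(x_0, y)] + \delta h_{a_0}(x_0, y),
\]
which is the nonlocal discrete analogue of $F'(D^2 u) : D^2 \partial_{ee} u \ge 0$ for concave $F$.

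Next I would integrate this inequality against $K(y)\eta(y)\,\ud y$ and apply Fubini to swap the two integrations; the positivity of the measure and the cut-off $\eta$ make this routine. The first term becomes $L^{(i+1)}_{a_0}$ applied to $w(x) := \int_{\R^n}\delta\tilde v_{i+1}(x,y) K(y)\eta(y)\,\ud y$, evaluated at $x_0$. Since $L^{(i+1)}_{a_0}\in \mathscr{L}_2(\lda,\Lda,\sigma)$, it is dominated pointwise by $\mathcal{M}^+_2$, so
\[
\mathcal{M}^+_2 w(x_0) \ge L^{(i+1)}_{a_0} w(x_0) \ge -\int_{\R^n} \delta h_{a_0}(x_0,y)\,K(y)\eta(y)\,\ud y,
\]
and the lemma reduces to bounding the remainder integral by $C\rho^{2-\al}$. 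The viscosity reformulation of this formal computation uses test functions exactly as in \cite{CS09, CS11}, and should be entirely standard.

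For the remainder estimate, note that $\eta_1\equiv 1$ on $B_3$, so throughout the integration $h_{a_0}(x) = L^{(i+1)}_{a_0} R^{(2)}_\rho(x)$. Using the change of variables \eqref{eq:change of variable of V} and the splitting $V^{(2)}_\ell = P_\ell + v_\ell^{(2)}$, I would estimate $L^{(\ell)}_{a_0} V^{(2)}_\ell$ term by term. The smooth piece $P_\ell \in C^4_c(B_{1/2})$ has uniformly bounded $C^2(B_1)$ image under $L^{(\ell)}_{a_0}$ by Lemma \ref{lem:derivative-1}. The rough piece $v_\ell^{(2)}$ is supported outside $B_{1/4}$, so for $x$ near the origin its image depends only on the kernel values at $|y|\ge 1/8$, where the smoothness assumption \eqref{eq:kernel smooth1} makes $L^{(\ell)}_{a_0} v_\ell^{(2)}$ manifestly $C^2$ near the origin with uniform bound. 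After the rescaling by $\rho^{i+1-\ell}$, each term contributes a second difference in $x$ bounded by $C\rho^{(i+1-\ell)(2-\al)}|y|^2$, and summing the geometric series yields $|\delta h_{a_0}(x_0,y)|\le C\rho^{2-\al}|y|^2$ for $|y|\le 3/2$. Finally $\int_{B_{3/2}} |y|^2 K(y)\,\ud y$ is uniformly bounded (the $2-\sigma$ factor in the kernel cancels the singularity as $\sigma\nearrow 2$), giving the claimed estimate.

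The hard part will be the remainder bound. A naive application of Lemma \ref{lem:derivative-2} delivers only $C^{\bar\al}$ regularity of $L^{(\ell)}_{a_0} V^{(2)}_\ell$, hence second differences of size $|y|^{\bar\al}$; but $\int|y|^{\bar\al}K(y)\,\ud y$ diverges near the origin unless $\bar\al>\sigma$, which is not guaranteed. The crucial refinement is to exploit that $v_\ell^{(2)}$ vanishes on $B_{1/4}$, so near the origin the operator $L^{(\ell)}_{a_0} v_\ell^{(2)}$ inherits the $C^2$ smoothness of the kernel away from $0$. This is precisely the reason for the delicate decomposition introduced in Section \ref{sec:statement}, and is the technical payoff that allows the geometric series in $\rho^{2-\al}$ to close.
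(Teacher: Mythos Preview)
Your proposal is correct and follows essentially the same approach as the paper: the paper implements your ``integrate against $K(y)\eta(y)$ and apply Fubini'' step by convolving the equation with the truncated kernels $\phi_k=\chi_{\R^n\setminus B_{1/k}}K\eta$ and passing to the limit via Lemma~5 of \cite{CS10}, which is the standard way to make that swap rigorous at the viscosity level. Your remainder estimate coincides with the paper's, which likewise uses that $V^{(2)}_\ell=p_\ell$ on $B_{1/4}$ so that $L^{(\ell)}_a V^{(2)}_\ell$ is $C^2$ near the origin by Lemma~\ref{lem:derivative-1}, and then sums the geometric series in $\rho^{2-\al}$.
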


\begin{proof} Let $\phi_k$ be the $L^1$ function $\phi_k=\chi_{\R^n\setminus B_{1/k}} K(y)\eta(y)$, where $\chi_E$ is the characteristic function of a set $E$.
For every $a\in\mathcal A$, we know from \eqref{eq:aux0} that
\[
L^{(i+1)}_a \tilde v_{i+1}(x)+h_a(x)+\rho^{-(i+1)\al}b_{a}\ge 0 \quad \forall ~x\in B_{3}.
\] 
It follows that for all $x\in B_{3/2}$,
\[
\begin{split}
0&\le (L^{(i+1)}_a \tilde v_{i+1}+h_a+\rho^{-(i+1)\al}b_{a})* \phi_k (x)\\
&\le L^{(i+1)}_a (\tilde v_{i+1}* \phi_k)(x)+h_a* \phi_k(x)+\rho^{-(i+1)\al}b_{a}\|\phi_k\|_{L^1}.
\end{split}
\]
It also follows from \eqref{eq:aux0} that 
\[
\inf_{a\in\mathcal A}\{ \|\phi_k\|_{L^1}( L^{(i+1)}_a \tilde v_{i+1}(x)+h_a(x)+\rho^{-(i+1)\al}b_{a}) \}=0\quad\forall\ x\in B_3.
\]
This implies that for all $x\in B_{3/2}$,
\[
\sup_{a\in\mathcal A}L^{(i+1)}_a (\tilde v_{i+1}* \phi_k -\|\phi_k\|_{L_1}\tilde v_{i+1})(x)+\sup_{a\in\mathcal A} \{h_a* \phi_k(x)-\|\phi_k\|_{L_1}h_a(x)\}\ge 0.
\]
For any $x\in B_{3/2}$, any $a\in\mathcal A$, by using \eqref{eq:change of variable of V} and change of variables we have
\begin{align*}
&2|h_a* \phi_k(x)-\|\phi_k\|_{L_1}h_a(x)|\\&\le |\int_{B_{3/2}\setminus B_{1/k}}\delta (L_a^{(i+1)} R^{(2)}_\rho) (x,y)K(y)\eta(y)\,\ud y|\\&
\le\sum_{\ell=0}^i \rho^{(\ell-1-i)\al}\int_{B_{3/2}\setminus B_{1/k}}|\delta (L_a^{(\ell)}V_{\ell}^{(2)})(\rho^{i+1-\ell}x, \rho^{i+1-\ell}y)|K(y)\eta(y)\,\ud y\\&
\le\sum_{\ell=0}^i \rho^{(i+1-\ell)(\sigma-\al)}\int_{B_{3\rho^{i+1-\ell}/2}\setminus B_{\rho^{i+1-\ell}/k}}|\delta (L_a^{(\ell)}V_{\ell}^{(2)})(\rho^{i+1-\ell}x, y)|K^{-(i+1-\ell)}(y)\,\ud y\\&
\le\sum_{\ell=0}^i \rho^{(i+1-\ell)(\sigma-\al)}\int_{B_{3\rho^{i+1-\ell}/2}}\|L_a^{(\ell)}V_{\ell}^{(2)}\|_{C^2(B_{1/8})}|y|^2K^{-(i+1-\ell)}(y)\,\ud y\\&
\le   \sum_{\ell=0}^i \rho^{(i+1-\ell)(\sigma-\al)}\|L_a V_\ell^{(2)}\|_{C^2(B_{1/8})} \int_{B_{3\rho^{i+1-\ell}/2}} \frac{\Lda (2-\sigma)}{|y|^{n+\sigma-2}}\,\ud y\\&
\le C   \sum_{\ell=0}^i \rho^{(i+1-\ell)(\sigma-\al)}(\|V_\ell^{(2)}\|_{C^4(B_{1/4})}+\|V_\ell^{(2)}\|_{L^\infty(\R^n)}) \rho^{(i+1-\ell)(2-\sigma)} \\&
\le C\rho^{2-\al}(\|V_\ell^{(2)}\|_{C^4(B_{1/4})}+\|V_\ell^{(2)}\|_{L^\infty(\R^n)}) \sum_{\ell=0}^\infty \rho^{\ell(2-\al)} \\&
\le C \rho^{2-\al},
\end{align*}
where $K^{-(i+1-\ell)}(y)=\rho^{-(i+1-\ell)(n+\sigma)}K(\rho^{-(i+1-\ell)} y)$, and Lemma \ref{lem:derivative-1} was used since
$V_\ell^{(2)}(x)=p_{\ell}(x)$ in $B_{1/4}$.
Consequently,
\[
\mathcal M^+_2(\tilde v_{i+1}* \phi_k -\|\phi_k\|_{L_1}\tilde v_{i+1})(x) \ge -C \rho^{2-\al}.
\]
The result follows from Lemma 5 in \cite{CS10} by taking the limit as $k\to\infty$.
\end{proof}

\begin{lem}\label{lem:7.1} Let $K$ be a symmetric kernel satisfying $0\le K(y)\le (2-\sigma)\Lda |y|^{-n-\sigma}$.
Then for every smooth bump function $\eta$ such that
\begin{align*}
0\le \eta(x)\le 1\quad \mbox{in }\R^n, \qquad  \eta(x)&=\eta(-x)\quad \mbox{in }\R^n, \\
\eta(x)=0 \quad \mbox{in }\R^n\setminus B_{4/5} ,\qquad \eta(x)&=1\quad \mbox{in }B_{3/4},
\end{align*}
we have
\[
\mathcal M^+_2\left(\eta(x)\int_{B_{1}}\delta \tilde v_{i+1}(x,y)K(y)\,\ud y\right)\ge -C(\rho^{\bar \al-\al}+\frac{1}{M}) \quad \mbox{in }B_{3/5}.
\]
\end{lem}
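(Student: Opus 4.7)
The plan is to reduce Lemma \ref{lem:7.1} to Lemma \ref{lem:6.1} by approximating the sharp cutoff $\chi_{B_1}(y)$ by a smooth $y$-bump $\tilde\eta$ equal to one on $B_1$ and supported in $B_{3/2}$, then handling (i) the annulus error thereby introduced and (ii) the $x$-cutoff $\eta$ via standard properties of the extremal operators.

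First, fix such a $\tilde\eta$. Lemma \ref{lem:6.1} applied to $\tilde\eta$ gives $\mathcal M^+_2 g(x)\ge -C\rho^{2-\al}$ on $B_{3/2}$, where $g(x):=\int_{\R^n}\delta\tilde v_{i+1}(x,y)K(y)\tilde\eta(y)\,\ud y$. Let $h(x):=\int_{B_1}\delta\tilde v_{i+1}(x,y)K(y)\,\ud y$ and $E(x):=g(x)-h(x)$; then $E$ is an integral over the annulus $B_{3/2}\setminus B_1$, bounded away from the singularity of $K$ at $y=0$. Using $\tilde v_{i+1}=v_{i+1}+R^{(1)}_\rho$, the bound $\|v_{i+1}\|_{L^\infty}\le 1/M$, and the growth estimate $|R^{(1)}_\rho(z)|\le C\rho^{\bar\al-\al}|z|^{\sigma+\bar\al}$ (which follows from the uniform $C^{\sigma+\bar\al}_c$ bound on each $V_\ell^{(1)}$ in \eqref{eq:W1} together with the geometric-sum cancellation used in \eqref{eq:r1}), I would obtain $|E(x)|\le C(1/M+\rho^{\bar\al-\al})$ uniformly on $\overline{B_2}$.

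Since $\eta h=g-(1-\eta)g-\eta E$, the sub/superadditivity of $\mathcal M^\pm_2$ and the relation $\mathcal M^-_2(-u)=-\mathcal M^+_2 u$ give
\[
\mathcal M^+_2(\eta h)(x)\ge \mathcal M^+_2 g(x)-\mathcal M^+_2\bigl((1-\eta)g\bigr)(x)-\mathcal M^+_2(\eta E)(x).
\]
For $x\in B_{3/5}$, $(1-\eta)g$ vanishes on $B_{3/4}\supset B_{3/5}$ and $\eta E$ is compactly supported in $B_{4/5}$ with $\|\eta E\|_{L^\infty}\le C(1/M+\rho^{\bar\al-\al})$. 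In each case the tail contribution to $\mathcal M^+_2$ at $x$ is bounded by the $L^\infty$ estimates against an integrable-at-infinity kernel, while the near-singularity contribution is handled by smooth mollification and limit passage as in the final paragraph. This yields $\mathcal M^+_2\bigl((1-\eta)g\bigr)(x),\,\mathcal M^+_2(\eta E)(x)\le C(1/M+\rho^{\bar\al-\al})$, and combined with $\mathcal M^+_2 g\ge -C\rho^{2-\al}$ and $\rho^{2-\al}\le\rho^{\bar\al-\al}$ (since $\bar\al<2$ and $\rho<1$) one concludes $\mathcal M^+_2(\eta h)(x)\ge -C(\rho^{\bar\al-\al}+1/M)$ on $B_{3/5}$.

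The main obstacle is the rigorous treatment of $\mathcal M^+_2$ applied to the nonlocal quantities $g$, $h$ and $E$, whose pointwise values are defined through singular integrals of $\tilde v_{i+1}$. Following the strategy in the proof of Lemma \ref{lem:6.1}, I would replace the sharp cutoff $\chi_{B_1}$ by the $L^1$ approximations $\phi_k:=\chi_{B_1\setminus B_{1/k}}K$, work with the classical convolutions $\tilde v_{i+1}*\phi_k-\|\phi_k\|_{L^1}\tilde v_{i+1}$ (where all pointwise operations are legitimate), and pass to the limit $k\to\infty$ via Lemma~5 of \cite{CS10}. Justifying these limits and interchanges alongside the decomposition above is the technical heart of the argument.
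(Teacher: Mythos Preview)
Your outline follows the same skeleton as the paper (invoke Lemma~\ref{lem:6.1}, then strip off the $x$-cutoff), but the step where you claim $\mathcal M^+_2\bigl((1-\eta)g\bigr)(x)\le C(1/M+\rho^{\bar\al-\al})$ has a genuine gap. You appeal to ``$L^\infty$ estimates against an integrable-at-infinity kernel,'' yet you have established no bound on $g(z)$ for $z\notin B_{3/4}$, and in fact $g(z)$ is not even well-defined pointwise where $v_{i+1}$ is not known to be $C^2$ (recall its regularity is only known in $B_1$, and proving it is the goal). Your final-paragraph mollification by $\phi_k$ removes the singularity of the \emph{inner} integral at $y=0$; it does nothing for the \emph{outer} tail, which is where the content lies. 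In the paper this tail is exactly the hard part: one fixes a single $\bar L\in\mathscr L_2$, writes
\[
\bar L(\eta\,T\tilde v_{i+1})(x)\ \ge\ \bar L(T\tilde v_{i+1})(x)-2\int_{\R^n}(1-\eta(x-y))\,T\tilde v_{i+1}(x-y)\,\bar K(y)\,\ud y,
\]
and then splits $T\tilde v_{i+1}=Tv_{i+1}+TR^{(1)}_\rho$. For the $v_{i+1}$ piece one uses the \emph{self-adjointness} of $T$ to move it onto the smooth weight $(1-\eta(x-\cdot))\bar K(\cdot)$, so that only $\|v_{i+1}\|_{L^\infty}\le 1/M$ is needed; this is essential precisely because $Tv_{i+1}$ is not a global pointwise object. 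For the $R^{(1)}_\rho$ piece one proves the pointwise growth $|TR^{(1)}_\rho(x)|\le C\rho^{\bar\al-\al}(1+|x|^{\bar\al})$ on all of $\R^n$ (via Lemma~\ref{lem:derivative-2} applied to each $V^{(1)}_\ell$, not the cruder bound $|R^{(1)}_\rho(z)|\le C\rho^{\bar\al-\al}|z|^{\sigma+\bar\al}$ you quote), and then integrates against $\bar K$ using $\sigma>\bar\al$. Neither mechanism is present in your sketch, and they cannot be replaced by an $L^\infty$ bound on $g$ that you do not have.

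A minor point: the smooth $y$-cutoff $\tilde\eta$ and the annulus error $E$ are unnecessary. Lemma~\ref{lem:6.1} imposes no smoothness on its bump, so one may take $\chi_{B_1}$ directly and work with $T\tilde v_{i+1}=\int_{B_1}\delta\tilde v_{i+1}(x,y)K(y)\,\ud y$ from the start, as the paper does.
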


\begin{proof}  Define
\[
Tv(x)=\int_{B_{1}} \delta v(x,y)K(y)\,\ud y.
\]
It follows from Lemma \ref{lem:6.1} that
\be \label{eq:lo1}
\mathcal M^+_2(T \tilde v_{i+1})(x) \ge -C\rho^{2-\al}\quad\mbox{in }B_{3/2}.
\ee
Let $\bar L$ be any operator with kernel $\bar K\in\mathscr{L}_2(\lda,\Lda,\sigma)$. For $x\in B_{3/5}$, we have
\begin{align}
\bar L(\eta T\tilde v_{i+1})(x)&= \int_{\R^n} \delta(T\tilde v_{i+1})(x,y)\bar K(y)\,\ud y- \int_{\R^n} \delta((1-\eta)T\tilde v_{i+1})(x,y)\bar K(y)\,\ud y \nonumber\\&
\ge \bar L(T\tilde v_{i+1})(x)-2 \int_{\R^n} (1-\eta(x-y)) T\tilde v_{i+1}(x-y) \bar K(y) \,\ud y.\label{eq:L bar esti}
\end{align}
Now we estimate the second term in the last inequality. Recall that $\tilde v_{i+1}=v_{i+1}+R^{(1)}_\rho$. It is clear that
\begin{align}
&\int_{\R^n}  T v_{i+1}(x-y) (1-\eta(x-y)) \bar K(y)\,\ud y \nonumber\\&
= \int_{\R^n}   v_{i+1}(x-y)T ((1-\eta(x-\cdot)) \bar K(\cdot))(y)\,\ud y
\le C\|v_{i+1}\|_{L^\infty}\le C/M.\nonumber
\end{align}
By change of variables, we have for all $x\in \R^n$,
\begin{align}
|T R^{(1)}_\rho(x)| &=|\int_{B_1} \delta R^{(1)}_\rho(x,y)K (y)\,\ud y|
\nonumber  \\&
=| \sum_{\ell=0}^i \int_{B_{\rho^{i+1-\ell}}}  \rho^{-(i+1-\ell)\al} \delta V_\ell^{(1)}(\rho^{i+1-\ell} x, y) K^{-(i+1-\ell)}(y) \,\ud y |,\nonumber 
\end{align}
where $K^{-(i+1-\ell)}(y)=\rho^{-(i+1-\ell)(n+\sigma)}K(\rho^{-(i+1-\ell)} y)$.

By triangle inequality, we have
\begin{align}
&|T R^{(1)}_\rho(x)|\nonumber \\ &
\le \sum_{l=0}^i \rho^{-(i+1-\ell)\al}  | \int_{B_{\rho^{i+1-\ell}}}  (\delta V_\ell^{(1)}(\rho^{i+1-\ell} x, y)-\delta V_\ell^{(1)}(0, y)) K^{-(i+1-\ell)}(y) \,\ud y |\nonumber \\&
\quad
 + \sum_{l=0}^i \rho^{-(i+1-\ell)\al}  | \int_{B_{\rho^{i+1-\ell}}}  \delta V_\ell^{(1)}(0, y) K^{-(i+1-\ell)}(y) \,\ud y |  \nonumber \\&
\le  C\sum_{\ell=0}^i \|V_\ell^{(1)} \|_{C^{\sigma+\bar \al}(\R^n)} |x|^{\bar \al}  \rho^{(i+1-\ell)(\bar \al-\al)} \nonumber \\&
\quad
+ C\sum_{l=0}^i \rho^{-(i+1-\ell)\al}  \int_{B_{\rho^{i+1-\ell}}} \frac{(2-\sigma)\Lda|y|^{\sigma+\bar\al} }{|y|^{n+\sigma}}\,\ud \zeta\nonumber\\&
\le C\rho^{\bar \al-\al}(1+|x|^{\bar \al})  \quad\mbox{ for all }x\in \R^n,
\label{eq:11}
    \end{align}
where we used Lemma \ref{lem:derivative-2} and \eqref{eq:W1} in the second inequality.

It follows that for $x\in B_{3/5}$,
\begin{align}
&\int_{\R^n} (1-\eta(x-y)) T R^{(1)}_{\rho}(x-y) \bar K(y) \,\ud y\nonumber\\&= \int_{\R^n} (1-\eta(y)) T R^{(1)}_{\rho}(y) \bar K(x-y) \,\ud y\nonumber\\&
=\int_{\R^n\setminus B_{3/4}} (1-\eta(y)) T R^{(1)}_{\rho}(y) \bar K(x-y) \,\ud y\nonumber\\&
\le C\rho^{\bar \al-\al}\int_{|y|>1/64}\frac{(2-\sigma)}{|y|^{n+\sigma-\bar \al}}\le  C\rho^{\bar \al-\al},\label{eq:estimate of T}
\end{align}
where we used that $\sigma\ge\sigma_0>1>\bar\al$. Taking the supremum of all $\bar K$ in $\mathscr{L}_2(\lda,\Lda,\sigma)$ in \eqref{eq:L bar esti} and using \eqref{eq:lo1}, we complete the proof.
\end{proof}

\begin{lem}\label{lem:6.3} We have
\[
|L^{(i+1)}_{\bar a}  v_{i+1}(x)| \le C(\rho^{\bar \al-\al}+\frac{1}{M})\quad \mbox{in }B_{1/2}.
\]
\end{lem}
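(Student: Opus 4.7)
My plan is to treat the lower and upper bounds separately. The lower bound $L^{(i+1)}_{\bar a} v_{i+1}(x)\ge -C\rho^{\bar\al-\al}$ on $B_{1/2}$ is already the first line of \eqref{eq:lower}, deduced from the equation \eqref{eq:main} tested at $\bar a$ together with the identity $L^{(i+1)}_{\bar a}R^{(1)}_\rho(0)=0$ from \eqref{eq:d1} and the decay estimates \eqref{eq:r1}, \eqref{eq:new9}. The real work is the matching upper bound $L^{(i+1)}_{\bar a} v_{i+1}(x)\le C(\rho^{\bar\al-\al}+1/M)$ on $B_{1/2}$.

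I would begin with the decomposition
\[
L^{(i+1)}_{\bar a} v_{i+1}(x)=L^{(i+1)}_{\bar a}\tilde v_{i+1}(x)-L^{(i+1)}_{\bar a} R^{(1)}_\rho(x),
\]
where the second term is controlled pointwise on $B_{1/2}$ by $C\rho^{\bar\al-\al}$ via the estimate \eqref{eq:r1}. Splitting $L^{(i+1)}_{\bar a}\tilde v_{i+1}$ into the local integral $T\tilde v_{i+1}(x):=\int_{B_1}\delta\tilde v_{i+1}(x,y)K^{(i+1)}_{\bar a}(y)\,\ud y$ plus a tail on $\R^n\setminus B_1$, the tail is of order $\rho^{\bar\al-\al}+1/M$ by combining $\|v_{i+1}\|_\infty\le 1/M$, the bounds on $V_\ell^{(1)}$ from \eqref{eq:W1}, and the uniform decay of $K^{(i+1)}_{\bar a}$ outside $B_1$. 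It therefore suffices to bound $T\tilde v_{i+1}$ from above on $B_{1/2}$.

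For this step I would apply Lemma \ref{lem:7.1} with $K=K^{(i+1)}_{\bar a}$: the function $U(x):=\eta(x)T\tilde v_{i+1}(x)$, with $\eta$ as in that lemma, is a viscosity subsolution of $\mathcal{M}^+_2 U+C_0=0$ in $B_{3/5}$ where $C_0=C(\rho^{\bar\al-\al}+1/M)$, and $U\equiv T\tilde v_{i+1}$ on $B_{1/2}\subset B_{3/4}$. I would then establish a matching $\mathcal{M}^-_2$ supersolution bound $\mathcal{M}^-_2 U\le C_0$ on $B_{3/5}$ by mirroring the proof of Lemma \ref{lem:7.1} with the roles of $\mathcal{M}^+_2$ and $\mathcal{M}^-_2$ exchanged, using that the infimum in the equation \eqref{eq:aux0} is nearly attained at each interior point. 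Together with $U\equiv 0$ outside $B_{4/5}$, the standard $L^\infty$/ABP-type estimates of \cite{CS09} for functions satisfying two-sided bounded extremal inequalities then give $|U|\le CC_0$ on $B_{3/5}$, which yields the desired upper bound on $T\tilde v_{i+1}$ on $B_{1/2}$ and completes the proof.

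The main obstacle will be extracting the companion $\mathcal{M}^-_2$ supersolution estimate at the quantitatively correct level $C(\rho^{\bar\al-\al}+1/M)$. Lemmas \ref{lem:6.1} and \ref{lem:7.1} as stated use only the easy one-sided inequality $L^{(i+1)}_a\tilde v_{i+1}+h_a+\rho^{-(i+1)\al}b_a\ge 0$ valid for every $a\in\mathcal{A}$; the reverse direction requires a careful use of the near-achievement of the infimum at each point combined with the same convolution and mollification machinery in a viscosity-compatible way, which is the technical heart of this step.
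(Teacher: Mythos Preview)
Your plan has a genuine gap at exactly the point you flag as the ``main obstacle'': the companion supersolution estimate $\mathcal M^-_2 U\le C(\rho^{\bar\al-\al}+1/M)$ cannot be obtained by mirroring the proof of Lemma~\ref{lem:6.1}. That proof works because the inequality $L_a^{(i+1)}\tilde v_{i+1}+h_a+\rho^{-(i+1)\al}b_a\ge 0$ holds for \emph{every} $a\in\mathcal A$ and every $x$, so one may fix $a$, convolve against $\phi_k$, and only then take a supremum in $a$. For the opposite sign the infimum in \eqref{eq:aux0} is achieved (approximately) by an $a=a(x)$ that \emph{varies with $x$}; there is no single $a$ for which the quantity is $\le 0$ on all of $B_3$, and the convolution step breaks down. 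Concavity of the operator is precisely what makes the two directions asymmetric, so no ``viscosity-compatible mollification'' fixes this without a new idea. A secondary issue: even granting both extremal bounds on $B_{3/5}$, an ABP/comparison argument would still require control of $U$ on $B_{4/5}\setminus B_{3/5}$, which is the unknown.

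The paper avoids the two-sided extremal bound entirely. It first exploits the self-adjointness of the fixed linear operator $L^{(i+1)}_{\bar a}$: testing against a cutoff $\eta_1$ gives $\int L^{(i+1)}_{\bar a} v_{i+1}\,\eta_1=\int v_{i+1}\,L^{(i+1)}_{\bar a}\eta_1\le C/M$, and combining this with the known lower bound from \eqref{eq:lower} yields $\|L^{(i+1)}_{\bar a} v_{i+1}\|_{L^1(B_1)}\le C(\rho^{\bar\al-\al}+1/M)$, hence an $L^1$ bound on $T^{(i+1)}_{\bar a}\tilde v_{i+1}$. The upper pointwise bound then follows from the one-sided local maximum principle, Theorem~5.1 of \cite{CS11}, which needs only $\mathcal M^+_2 U\ge -C_0$ together with an integral bound on $U$ (no $\mathcal M^-_2$ information at all). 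So the missing ingredient in your outline is this duality/$L^1$ step, which replaces the unavailable supersolution inequality.
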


\begin{proof} Let $\eta_1(x)\ge 0$ be a smooth cutoff function in $B_2$ with $\eta_1\equiv 1$ in $B_{1}$. Then
\begin{align*}
\int_{\R^n} L^{(i+1)}_{\bar a} v_{i+1}\eta_1 =\int_{\R^n} v_{i+1}L^{(i+1)}_{\bar a}\eta_1 \le C\|v_{i+1}\|_{L^\infty(\R^n)}\le C/M.
\end{align*}
By \eqref{eq:lower}, $L^{(i+1)}_{\bar a} v_{i+1}\ge -C\rho^{\bar \al-\al}$ in $B_4$, we have
\begin{align*}
\int_{B_{1}} |L^{(i+1)}_{\bar a} v_{i+1}| \le   C(\rho^{\bar \al-\al}+\frac 1M).
\end{align*}
Let 
\[
T^{(i+1)}_{\bar a} v=\int_{B_1}\delta v(x,y) K^{(i+1)}_{\bar a}(y)\,\ud y.
\]
 It is easy to see that
\[
\int_{B_{1}} |T^{(i+1)}_{\bar a} v_{i+1}| \le \int_{B_{1}} |L^{(i+1)}_{\bar a} v_{i+1}|+  C\|v_{i+1}\|_{L^\infty}\le C(\rho^{\bar \al-\al}+\frac 1M).
\]
It follows from \eqref{eq:11} that for all $x\in \R^n$, 
\be\label{eq:esti in B of T}
|T^{(i+1)}_{\bar a} R^{(1)}_\rho(x)|\le C\rho^{\bar \al-\al}(1+|x|^{\bar \al}) .
\ee
Since $\tilde v_{i+1}=v_{i+1}+R^{(1)}_\rho$, we  obtain
\be \label{eq:L1}
\int_{B_{1}}| T^{(i+1)}_{\bar a}\tilde v_{i+1}|
 \le  C(\rho^{\bar \al-\al}+\frac 1M).
\ee
Let $\eta$ be the smooth cut-off function in Lemma \ref{lem:7.1}, and denote $v(x):=\eta(x)T^{(i+1)}_{\bar a} \tilde v_{i+1}(x)$. It follows from Lemma \ref{lem:7.1} that
\[
\mathcal M^+_2  v(x) \ge -C(\rho^{\bar \al-\al}+\frac 1M)\quad\mbox{in }B_{3/5}.
\]
It follows from  \eqref{eq:L1} and Theorem 5.1 in \cite{CS11} that $v\le C(\rho^{\bar \al-\al}+\frac 1M)$ in $B_{1/2}$.  But $v=T^{(i+1)}_{\bar a} \tilde v_{i+1}$ in $B_{1/2}$, so we have proved that 
\[
 T^{(i+1)}_{\bar a} \tilde v_{i+1} \le C(\rho^{\bar \al-\al}+\frac 1M)\quad \mbox{in} B_{1/2}.
 \]
 By \eqref{eq:esti in B of T}, we have $ T^{(i+1)}_{\bar a} v_{i+1}(x) \le C(\rho^{\bar \al-\al}+\frac 1M)$ in $B_{1/2}$, and thus,  
\[
 L^{(i+1)}_{\bar a} v_{i+1}(x) \le C(\rho^{\bar \al-\al}+\frac 1M)\quad \mbox{in } B_{1/2}.
\]
 We complete the proof together with \eqref{eq:lower}.
\end{proof}

\begin{lem} \label{lem:7.2}There is a universal constant $C$ such that for every operator $L$ with a symmetric kernel $K$ satisfying $0\le K(y)\le (2-\sigma) \Lda|y|^{n+\sigma}$, we have
\[
|L v_{i+1}(x)| \le C(\rho^{\bar \al-\al}+\frac{1}{M}) \quad \mbox{in }B_{1}.
\]
\end{lem}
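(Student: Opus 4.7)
The plan is to mimic the argument of Lemma~\ref{lem:6.3}, replacing the distinguished operator $L^{(i+1)}_{\bar a}$ by an arbitrary symmetric $L$ whose kernel $K$ satisfies $0\le K(y)\le(2-\sigma)\Lda|y|^{-n-\sigma}$. First, I would split
\[
Lv_{i+1}(x)=Tv_{i+1}(x)+\int_{\R^n\setminus B_1}\delta v_{i+1}(x,y)K(y)\,\ud y,
\]
where $Tv(x)=\int_{B_1}\delta v(x,y)K(y)\,\ud y$. The tail integral is bounded pointwise by $C\|v_{i+1}\|_{L^\infty(\R^n)}\le C/M$ using only the upper bound on $K$. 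Writing $\tilde v_{i+1}=v_{i+1}+R^{(1)}_\rho$, the pointwise estimate \eqref{eq:11}, whose proof rests on Lemma~\ref{lem:derivative-2} and therefore uses only the upper bound on $K$ (no smoothness), yields $|TR^{(1)}_\rho(x)|\le C\rho^{\bar\al-\al}(1+|x|^{\bar\al})$. Hence it suffices to control $T\tilde v_{i+1}$ pointwise.

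Next, I would apply Lemma~\ref{lem:7.1}, whose hypotheses also require only that $K$ be symmetric with the prescribed upper bound, to obtain the Pucci-maximal estimate
\[
\mathcal M^+_2\bigl(\eta(x)T\tilde v_{i+1}(x)\bigr)\ge -C(\rho^{\bar\al-\al}+1/M)\quad\text{in }B_{3/5},
\]
with $\eta$ as in Lemma~\ref{lem:7.1}. To invoke Theorem 5.1 of \cite{CS11} --- as in the proof of Lemma~\ref{lem:6.3} --- and upgrade this to a pointwise $L^\infty$ bound for $\eta T\tilde v_{i+1}$ in $B_{1/2}$, I need an $L^1$ bound on $T\tilde v_{i+1}$ over $B_1$. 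The self-adjointness identity $\int Lv_{i+1}\,\eta_1\,\ud x=\int v_{i+1}\,L\eta_1\,\ud x$, valid because $K$ is symmetric and $L\eta_1\in L^\infty$ for smooth compactly supported $\eta_1$, gives $|\int Lv_{i+1}\,\eta_1|\le C/M$. To promote this to an $L^1$ estimate I would combine it with a one-sided pointwise bound on $Lv_{i+1}$ transferred from the control on $L^{(i+1)}_{\bar a}v_{i+1}$ provided by Lemma~\ref{lem:6.3}, using the concave structure of the infimum equation.

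Once Theorem 5.1 of \cite{CS11} delivers $T\tilde v_{i+1}\le C(\rho^{\bar\al-\al}+1/M)$ in $B_{1/2}$, the complementary lower bound follows by running the analogous argument in the opposite direction (the admissible class of $L$ is closed under the relevant sign reflection, and the concavity can be handled symmetrically). Recombining via the pointwise estimate on $TR^{(1)}_\rho$ and the tail bound gives $|Lv_{i+1}|\le C(\rho^{\bar\al-\al}+1/M)$ in $B_{1/2}$; patching with translates of $B_{1/2}$ contained in $B_5$ then yields the estimate on all of $B_1$. The main obstacle is to secure the one-sided pointwise lower bound on $Lv_{i+1}$ for an arbitrary admissible $L$, since the bound \eqref{eq:lower} is attached specifically to $L^{(i+1)}_{\bar a}$; bridging this gap through the concavity of the equation and Lemma~\ref{lem:6.3} is the technically delicate step of the proof.
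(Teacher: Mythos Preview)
Your proposal has a genuine gap at precisely the point you yourself flag. To run the Lemma~\ref{lem:6.3} argument you need a one-sided pointwise bound $Lv_{i+1}\ge -C(\rho^{\bar\al-\al}+1/M)$ in $B_5$ in order to convert the integral bound $\int Lv_{i+1}\,\eta_1 = \int v_{i+1}\,L\eta_1$ into an $L^1$ bound. For $L^{(i+1)}_{\bar a}$ this lower bound is \eqref{eq:lower}, which comes from the fact that $L^{(i+1)}_{\bar a}$ is one of the operators appearing inside the infimum in \eqref{eq:aux0}. An arbitrary symmetric $K$ with only an upper bound has no connection to that infimum, so concavity alone does not furnish the one-sided bound you need; Lemma~\ref{lem:6.3} gives two-sided control of $L^{(i+1)}_{\bar a}v_{i+1}$, but that does not transfer pointwise to $Lv_{i+1}$. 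Your suggestion that ``the admissible class of $L$ is closed under the relevant sign reflection'' is also not right: replacing $K$ by $-K$ destroys the nonnegativity, and the equation is genuinely one-sided because it is a concave infimum.

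The paper closes this gap with two ingredients you are missing. First, from Lemma~\ref{lem:6.3} one has $\|L^{(i+1)}_{\bar a}v_{i+1}\|_{L^2(B_{1/2})}\le C(\rho^{\bar\al-\al}+1/M)$, and Theorem~4.3 of \cite{CS11} then transfers this to an $L^2$ bound on $Lv_{i+1}$ for \emph{every} $L$ with kernel in $\mathscr L_0(\lda,\Lda,\sigma)$; this replaces the unavailable one-sided pointwise bound and feeds into Theorem~5.1 of \cite{CS11} to give the pointwise upper bound $Lv_{i+1}\le C(\rho^{\bar\al-\al}+1/M)$. Second, to obtain the matching lower bound and to cover kernels with only the upper bound $0\le K\le (2-\sigma)\Lda|y|^{-n-\sigma}$, the paper uses a linear-combination trick: the kernel $K_d=\frac{2}{\lda}K^{(i+1)}_{\bar a}-\frac{1}{\Lda}K$ lies in an $\mathscr L_0$ class with adjusted constants, so the upper bound just proved applies to $L_d v_{i+1}$, and then $Lv_{i+1}=\frac{2\Lda}{\lda}L^{(i+1)}_{\bar a}v_{i+1}-\Lda L_d v_{i+1}$ together with \eqref{eq:lower} yields the lower bound. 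The companion kernel $\tilde K_d=\frac{2}{\lda}K^{(i+1)}_{\bar a}+\frac{1}{\Lda}K$ gives the upper bound for general $K$ in the same way.
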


\begin{proof} We will prove the estimate in $B_{1/6}$, and the general estimate follows from scaling and translation arguments. By Lemma \ref{lem:6.3} we have
\[
\|L^{(i+1)}_{\bar a} v_{i+1}\|_{L^2(B_{1/2})}  \le C(\rho^{\bar \al-\al}+\frac{1}{M}).
\]
Note that $\|v_{i+1}\|_{L^1(\R^n, 1/(1+|y|^{n+\sigma}))}\le C\|v_{i+1}\|_{L^\infty}\le C/M$. From Theorem 4.3 of \cite{CS11}, we have $L^2$ estimate for every linear operator $L$ with kernel $K\in \mathscr{L}_0(\lda,\Lda,\sigma)$,
\[
\|L v_{i+1}\|_{L^2(B_{1/3})} \le C(\rho^{\bar \al-\al}+\frac{1}{M}).
\]
We split the integral of $Lv_{i+1}$ as
\[
Lv_{i+1}(x)=\int_{B_{1}} +\int_{B_{1}^c} \delta v_{i+1}(x,y) K(y)\,\ud y.
\]
It is clear that
\[
|\int_{B_{1}^c} \delta v_{i+1}(x,y) K(y)\,\ud y| \le C\|v_{i+1}\|_{L^\infty}\le C/M.
\] Hence,  we have $L^2$ estimates for the first one
\[
\left\|\int_{B_{1}} \delta v_{i+1}(x,y) K(y)\,\ud y\right\|_{L^2(B_{1/3})}\le C(\rho^{\bar \al-\al}+\frac{1}{M}).
\]
It follows from \eqref{eq:11} that
\be\label{eq:Lw est}
\begin{split}
|\int_{B_{1}} \delta R^{(1)}_\rho(x,y) K(y)\,\ud y|\le C\rho^{\bar \al-\al}\quad\mbox{for }x\in B_1.
\end{split}
\ee
By triangle inequality, we have
\be \label{eq:L2}
\left\|\int_{B_{1}} \delta \tilde v_{i+1}(x,y) K(y)\,\ud y\right\|_{L^2(B_{1/3})}\le C(\rho^{\bar \al-\al}+\frac{1}{M}).
\ee
For a smooth cut-off function $c(x)\in C_c^\infty(B_{1/3})$, $c(x)=c(-x)$, and $c(x)=1$ in $B_{1/4}$, we define
\[
v(x):=c(x)\int_{B_{1}}\delta \tilde v_{i+1}(x,y)K(y)\,\ud y.
\]
It follows from the proof of Lemma \ref{lem:7.1} that $\mathcal M^+_2 v\ge -C(1/M+\rho^{\bar \al-\al})$ in $B_{1/5}$. By \eqref{eq:L2} and Theorem 5.1 in \cite{CS11} we have
$v \le C(1/M+\rho^{\bar \al-\al})$ in $B_{1/6}$, and thus
\[
\int_{B_{1}}\delta \tilde v_{i+1}(x,y)K(y)\,\ud y \le C(1/M+\rho^{\bar \al-\al})\quad\mbox{in }B_{1/6}.
\]
Since \eqref{eq:Lw est} holds for $x\in B_{1}$, we have that
\[
\int_{B_{1}}\delta v_{i+1}(x,y)K(y)\,\ud y \le C(1/M+\rho^{\bar \al-\al})\quad\mbox{in }B_{1/6}.
\]
Consequently,
\[
Lv_{i+1}\le C(1/M+\rho^{\bar \al-\al})\quad\mbox{in }B_{1/6}.
\]

Consider the kernel
\[
K_d= \frac{2}{\lda} K^{(i+1)}_{\bar a}-\frac{1}{\Lda} K
\]
and the corresponding linear operator $L_d$, where $0\le K\le (2-\sigma)\Lda |y|^{-n-\sigma}$. The kernel $K_d$ satisfies the ellipticity condition $(2-\sigma)|y|^{-n-\sigma}\le K_d(y)\le (2-\sigma)(2\Lda/\lda)|y|^{-n-\sigma}$. The same proof as above yields that
\[
L_d  v_{i+1}\le C(1/M+\rho^{\bar \al-\al})\quad\mbox{in }B_{1/6}.
 \]Since $L^{(i+1)}_{\bar a}  v_{i+1}$ is lower bounded by \eqref{eq:lower}, we obtain a bound from below for $L$ in $B_{1/6}$
\[
L v_{i+1}=2\frac{\Lda}{\lda} L^{(i+1)}_{\bar a} v_{i+1}-\Lda L_d v_{i+1}\ge -C(1/M+\rho^{\bar \al-\al})\quad\mbox{in }B_{1/6}.
\]
Similarly, if we consider $\tilde K_d=\frac{2}{\lda} K^{(i+1)}_{\bar a}+\frac{1}{\Lda} K$, we obtain that $L v_{i+1}\le C(1/M+\rho^{\bar \al-\al})$. 
In conclusion, we obtained that $|L v_{i+1}|\le C(1/M+\rho^{\bar \al-\al})$ in  $B_{1/6}$.
\end{proof}

The above lemma immediately gives

\begin{cor}\label{cor:7.3} $\mathcal M^+_0  v_{i+1}$ and $\mathcal M^-_0 v_{i+1}$ are bounded by $C(\rho^{\bar \al-\al}+\frac{1}{M})$ in $B_{1}$.
In particular,
\be\label{eq:grad est u}
\|\nabla  v_{i+1}\|_{L^\infty(B_{1/2})}\le C(\rho^{\bar \al-\al}+\frac{1}{M}),
\ee
and consequently,
\be\label{eq:grad est}
\|\nabla \tilde v_{i+1}\|_{L^\infty(B_{1/2})}\le C(\rho^{\bar \al-\al}+\frac{1}{M}).
\ee
\end{cor}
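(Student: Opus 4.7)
The plan is to derive the corollary directly from Lemma \ref{lem:7.2} together with the standard $C^{1+\bar\beta}$ theory for the class of solutions with bounded maximal operators developed in \cite{CS09, CS11}. For the first claim, note that every kernel $K\in\mathscr L_0(\lda,\Lda,\sigma)$ automatically satisfies the upper bound $0\le K(y)\le(2-\sigma)\Lda|y|^{-n-\sigma}$ that is the only structural hypothesis required by Lemma \ref{lem:7.2}. Hence $|Lv_{i+1}(x)|\le C(\rho^{\bar\al-\al}+1/M)$ for every such $L$ and every $x\in B_1$. Taking the supremum and the infimum of $Lv_{i+1}(x)$ over the class $\mathscr L_0(\lda,\Lda,\sigma)$ gives the claimed bound on $\mathcal M^+_0 v_{i+1}$ and $\mathcal M^-_0 v_{i+1}$ in $B_1$ at once.

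For \eqref{eq:grad est u}, the plan is to invoke the interior Lipschitz/$C^{1+\bar\beta}$ regularity from \cite{CS09,CS11}: a function satisfying $\mathcal M^+_0 v_{i+1}\ge -C_0$ and $\mathcal M^-_0 v_{i+1}\le C_0$ in $B_1$ with $\|v_{i+1}\|_{L^\infty(\R^n)}\le 1/M$ is $C^{1+\bar\beta}$ in $B_{1/2}$ with a norm bound of the form $C(C_0+\|v_{i+1}\|_{L^\infty(\R^n)})$, and crucially the constant does not blow up as $\sigma\to 2$ since we are in the regime $\sigma\ge\sigma_0>1$. Applied with $C_0=C(\rho^{\bar\al-\al}+1/M)$ from the first step, this yields \eqref{eq:grad est u}.

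It remains to upgrade the estimate to $\tilde v_{i+1}=v_{i+1}+R_\rho^{(1)}$, and the plan is to control $\nabla R_\rho^{(1)}$ directly by a geometric-series argument of the type already used in Lemmas \ref{lem:6.1}--\ref{lem:7.1}. By \eqref{eq:W1}, each $V_\ell^{(1)}=v_\ell^{(1)}-P_\ell$ vanishes to order $[\sigma+\bar\al]\ge 1$ at the origin and satisfies $\|V_\ell^{(1)}\|_{C^{\sigma+\bar\al}(\R^n)}\le C$, which gives $|\nabla V_\ell^{(1)}(y)|\le C|y|^{\sigma+\bar\al-1}$. Differentiating
\[
R_\rho^{(1)}(x)=\sum_{\ell=0}^i\rho^{-(i+1-\ell)(\sigma+\al)}V_\ell^{(1)}(\rho^{i+1-\ell}x)
\]
and substituting this pointwise bound produces, for $x\in B_{1/2}$,
\[
|\nabla R_\rho^{(1)}(x)|\le C\sum_{\ell=0}^i\rho^{(i+1-\ell)(\bar\al-\al)}|x|^{\sigma+\bar\al-1}\le C\rho^{\bar\al-\al},
\]
where the sum is controlled by $\rho^{\bar\al-\al}/(1-\rho^{\bar\al-\al})$ as soon as $\rho\le\rho_0$. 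Adding this to \eqref{eq:grad est u} produces \eqref{eq:grad est}.

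The main obstacle is conceptual rather than computational: the $C^{1+\bar\beta}$ estimate invoked in the second paragraph must be the version in which the constant is uniform in $\sigma\in[\sigma_0,2)$, so that combining it with the $\sigma$-independent bounds from Lemma \ref{lem:7.2} preserves the universal character of all constants; this is precisely the formulation of the theory in \cite{CS09, CS11}. Everything else is a book-keeping exercise with the same geometric series $\sum_\ell\rho^{\ell(\bar\al-\al)}$ that governs the entire recursive scheme.
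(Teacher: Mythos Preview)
Your proposal is correct and follows essentially the same route as the paper's own proof: the bound on $\mathcal M^\pm_0 v_{i+1}$ is read off directly from Lemma \ref{lem:7.2}, the gradient estimate \eqref{eq:grad est u} is then obtained from the $C^{1+\bar\beta}$ theory of \cite{CS09} (the paper simply says this ``follows immediately since $\sigma\ge\sigma_0>1$''), and the passage to $\tilde v_{i+1}$ is done by the very same pointwise bound $|\nabla V_\ell^{(1)}(x)|\le C|x|^{\sigma+\bar\al-1}$ together with the geometric-series estimate on $\nabla R_\rho^{(1)}$. Your write-up is in fact slightly more explicit than the paper's in naming the $C^{1+\bar\beta}$ input, but the argument is the same.
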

\begin{proof}
The first conclusion is clear, from which \eqref{eq:grad est u} also follows immediately since  $\sigma\ge\sigma_0>1$. To prove \eqref{eq:grad est}, we notice that $V_\ell^{(1)}=v^{(1)}_\ell-P_\ell \in C^{\sigma+\bar \al}_c(B_{1/2})$, and $V_\ell^{(1)}=v^{(1)}_\ell-p_\ell $ in $B_{1/4}$ where $p_\ell$ is the Taylor expansion polynomial of $v^{(1)}_\ell$ at $x=0$ with degree $[\sigma+\bar\al]$. Hence, $|\nabla V_\ell^{(1)}(x)|\le C|x|^{\sigma+\bar\al-1}$ in $B_{1/2}$. Thus, for all $x\in B_{1/2}$,
\begin{align*}
|\nabla  R^{(1)}_\rho(x)|&= |\nabla \sum_{\ell=0}^i \rho^{-(i+1-\ell)(\sigma+\al)} V^{(1)}_\ell(\rho^{i+1-\ell}x)|\\&
\le C\sum_{\ell=0}^i \rho^{-(i+1-\ell)(\sigma+\al-1)} |\rho^{i+1-\ell}x|^{\sigma+\bar\al-1}
\le C\rho^{\bar \al-\al} .
\end{align*}
Thus, \eqref{eq:grad est} follows immediately.
\end{proof}

\begin{thm} \label{thm:sigma} We have
\[
\int_{\R^n}|\delta v_{i+1}(x,y)| \frac{(2-\sigma)}{|y|^{n+\sigma}}\,\ud y \le C(\rho^{\bar \al-\al} +\frac{1}{M})\quad \mbox{in }B_{1}.
\]
\end{thm}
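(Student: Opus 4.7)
The plan is to derive this weighted $L^1$ bound on $\delta v_{i+1}$ by dualizing the pointwise bound of Lemma \ref{lem:7.2} against sign-adapted kernels. The point is that, while $\int_{\R^n}(2-\sigma)|\delta v_{i+1}(x,y)|/|y|^{n+\sigma}\,\ud y$ is not itself a linear operator applied to $v_{i+1}$, it can be written as a difference of two such operators whose kernels both lie in the class covered by Lemma \ref{lem:7.2}.

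Concretely, I would fix $x\in B_1$ and split $\R^n$ into the sets $A^\pm(x):=\{y\in\R^n:\pm\delta v_{i+1}(x,y)>0\}$. Since $y\mapsto \delta v_{i+1}(x,y)$ is even, both $A^\pm(x)$ are symmetric under $y\mapsto -y$, so (after absorbing a factor of $\Lda$ into the constant if $\Lda<1$) the kernels
\[
K^\pm(y):=\frac{(2-\sigma)\,\chi_{A^\pm(x)}(y)}{|y|^{n+\sigma}}
\]
are symmetric and satisfy the upper bound $0\le K^\pm(y)\le (2-\sigma)\Lda/|y|^{n+\sigma}$ required by Lemma \ref{lem:7.2}. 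Writing $L^\pm$ for the corresponding linear operators, Lemma \ref{lem:7.2} then gives $|L^\pm v_{i+1}(x)|\le C(\rho^{\bar\al-\al}+1/M)$ at our chosen point $x$, and subtraction yields
\[
\int_{\R^n}\frac{(2-\sigma)|\delta v_{i+1}(x,y)|}{|y|^{n+\sigma}}\,\ud y \;=\; L^+v_{i+1}(x)-L^- v_{i+1}(x)\;\le\; 2C\Big(\rho^{\bar\al-\al}+\tfrac{1}{M}\Big).
\]

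Nothing here should be delicate: the only point to note is that the kernels $K^\pm$ depend on the base point $x$, which is allowed because the bound in Lemma \ref{lem:7.2} is uniform over \emph{all} admissible symmetric kernels of the prescribed form. In effect, this theorem is simply the translation of the bound on $\mathcal M^\pm_0 v_{i+1}$ from Corollary \ref{cor:7.3} into a weighted $L^1$ estimate on $\delta v_{i+1}$; all the real work has already been carried out in the proofs leading to Lemma \ref{lem:7.2}, and the remaining passage amounts to choosing sign-optimal kernels at each $x$.
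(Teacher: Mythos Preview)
Your proof is correct and is essentially the argument the paper defers to (Theorem 7.4 in \cite{CS11}): once Lemma \ref{lem:7.2} bounds $|Lv_{i+1}|$ uniformly over all symmetric kernels satisfying only the upper bound, choosing the sign-adapted kernels $K^\pm$ at each $x$ immediately yields the weighted $L^1$ estimate. In \cite{CS11} the same idea is phrased via the bounds on $\mathcal M^\pm_0$ from Corollary \ref{cor:7.3} (solving the resulting $2\times 2$ linear system for $\int(\delta v_{i+1})^\pm$), but your direct appeal to Lemma \ref{lem:7.2} is equivalent and arguably cleaner.
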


\begin{proof} Given Lemma \ref{lem:7.2} and Corollary \ref{cor:7.3},  it follows from the same proof as that of Theorem 7.4 in \cite{CS11}.
\end{proof}

\subsection{$C^{\sigma+\bar\al}$ estimates}

For brevity, we write 
\[
u=v_{i+1} \quad\mbox{and}\quad \tilde u =\tilde v_{i+1}.
\]
 in this subsection.

Let $\eta$ be a bump function as in Lemma \ref{lem:7.1}. For each measurable set $A$ with $-A=A$, we write
\[
w_A(x)=\eta(x)\int_{B_{1}} (\delta \tilde u (x,y)-\delta \tilde u(0,y) )K_A(y)\,\ud y,
\]
where
\[
K_A(y)=\frac{(2-\sigma)}{|y|^{n+\sigma}} \chi_A(y).
\]
For $x\in B_{1}$, by Lemma \ref{lem:derivative-2} and change of variables, we have
\begin{align}
&|\int_{B_{1}} (\delta R^{(1)}_\rho(x,y) -\delta R^{(1)}_\rho(0,y))K_A(y)\,\ud y|\nonumber\\&
=|\sum_{\ell=0}^i \rho^{-(i+1-\ell)(\sigma+\al)}\int_{B_{1}}(\delta V_\ell^{(1)}(\rho^{i+1-\ell}x, \rho^{i+1-\ell}y)-\delta V_\ell^{(1)}(0, \rho^{i+1-\ell}y))K_A(y)\,\ud y|\nonumber\\&
=|\sum_{\ell=0}^i \rho^{-(i+1-\ell)\al}\int_{B_{\rho^{i+1-\ell}}}(\delta V_\ell^{(1)}(\rho^{i+1-\ell}x, y)-\delta V_\ell^{(1)}(0, y))K^{(\ell-1-i)}_A(y)\,\ud y|\nonumber\\&
\le \sum_{\ell=0}^i \rho^{-(i+1-\ell)\al}\|V^{(1)}_\ell\|_{C^{\sigma+\bar\al}(\R^n)}\rho^{(i+1-\ell)\bar\al}|x|^{\bar\al}\nonumber\\&
\le C\rho^{\bar \al-\al}|x|^{\bar \al}.\label{eq:r1 holder}
\end{align}
Then it follows from Theorem \ref{thm:sigma} that 
\be\label{eq:bound wa}
|w_A| \le C(\rho^{\bar \al-\al}+1/M)\quad\mbox{in}\quad\R^n. 
\ee
Also, it follows from Lemma \ref{lem:7.2} as well as  \eqref{eq:11} that
\[
 |\int_{B_{1}} \delta \tilde u(0,y)K_A(y)\,\ud y|\le C(\rho^{\bar \al-\al}+1/M).
\]
Together with Lemma \ref{lem:7.1},  we have
\[
\mathcal M^+_2 w_A \ge -C(\rho^{\bar \al-\al}+1/M) \quad \mbox{in }B_{3/5} \mbox{ uniformly in } A.
\]
As in \cite{CS11}, we define
\[
N^+(x):=\sup_{A} w_A(x)=\eta(x)\int_{B_{1}} (\delta \tilde u(x,y)- \delta \tilde u(0,y))^+\frac{(2-\sigma)}{|y|^{n+\sigma}}\,\ud y,
\]
\[
N^-(x):=\sup_{A} -w_A(x)=\eta(x)\int_{B_{1}} (\delta \tilde u(x,y)- \delta \tilde u(0,y))^-\frac{(2-\sigma)}{|y|^{n+\sigma}}\,\ud y.
\]

\begin{lem}\label{lem:9.1} For all $x\in B_{1/4}$, we have
\[
\frac{\lda}{\Lda} N^-(x)-C(\rho^{\bar \al-\al}+1/M) |x| \le N^+(x) \le \frac{\Lda}{\lda} N^-(x)+C(\rho^{\bar \al-\al}+1/M) |x|.
\]
\end{lem}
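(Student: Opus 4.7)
The plan is to exploit the concavity of the infimum equation \eqref{eq:aux0} satisfied by $\tilde u$, combined with the uniform ellipticity of the kernels, to transfer this to a pointwise comparison of the tails $N^+$ and $N^-$. I will prove the upper bound $N^+(x)\le (\Lda/\lda)N^-(x) + C(\rho^{\bar\al-\al}+1/M)|x|$; the lower bound is entirely symmetric, obtained by swapping the roles of the two evaluation points $x$ and $0$ when choosing the near-minimizer.

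Fix $x\in B_{1/4}$ and $\va>0$. By \eqref{eq:aux0}, pick $\bar a^*=\bar a^*(x,\va)\in\mathcal A$ with
\[
L^{(i+1)}_{\bar a^*}\tilde u(x) + h_{\bar a^*}(x) + \rho^{-(i+1)\al}b_{\bar a^*} \le \va,
\]
while the same relation at $x=0$ holds with $\ge 0$. Subtracting and splitting the difference $L^{(i+1)}_{\bar a^*}\tilde u(x)-L^{(i+1)}_{\bar a^*}\tilde u(0)$ into its contributions over $B_1$ and $B_1^c$ gives
\[
\int_{B_1}(\delta\tilde u(x,y)-\delta\tilde u(0,y))K^{(i+1)}_{\bar a^*}(y)\,\ud y \le \va + (h_{\bar a^*}(0)-h_{\bar a^*}(x)) - \mathcal T(x),
\]
where $\mathcal T(x):=\int_{B_1^c}(\delta\tilde u(x,y)-\delta\tilde u(0,y))K^{(i+1)}_{\bar a^*}(y)\,\ud y$. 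Since $\eta(x)=1$ on $B_{1/4}$, decomposing the left-hand integrand into its positive and negative parts and invoking the two-sided bound $\lda(2-\sigma)/|y|^{n+\sigma}\le K^{(i+1)}_{\bar a^*}(y)\le \Lda(2-\sigma)/|y|^{n+\sigma}$ yields
\[
\lda N^+(x)-\Lda N^-(x) \le \int_{B_1}(\delta\tilde u(x,y)-\delta\tilde u(0,y))K^{(i+1)}_{\bar a^*}(y)\,\ud y.
\]
Combining these, it remains to prove that both $|h_{\bar a^*}(x)-h_{\bar a^*}(0)|$ and $|\mathcal T(x)|$ are dominated by $C(\rho^{\bar\al-\al}+1/M)|x|$ uniformly in $\bar a^*$.

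For the oscillation of $h_{\bar a^*}$, $\eta_1\equiv 1$ on $B_{1/4}$, so $h_{\bar a^*}=L^{(i+1)}_{\bar a^*}R^{(2)}_\rho$ there, and by construction each $V^{(2)}_\ell$ coincides with the polynomial $p_\ell$ on $B_{1/4}$. Applying Lemma \ref{lem:derivative-1} to each $L^{(\ell)}_{\bar a^*}V^{(2)}_\ell$ produces uniform $C^2$ bounds on $B_{1/8}$, and the telescoping/change-of-variables computation used in the proof of Lemma \ref{lem:6.1} then gives $|h_{\bar a^*}(x)-h_{\bar a^*}(0)|\le C\sum_{\ell=0}^i \rho^{-(i+1-\ell)\al}\rho^{i+1-\ell}|x|\le C\rho^{1-\al}|x|\le C\rho^{\bar\al-\al}|x|$. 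For the tail $\mathcal T(x)$, I split $\tilde u=v_{i+1}+R^{(1)}_\rho$: the $v_{i+1}$-piece is bounded using the global $L^\infty$-bound $\|v_{i+1}\|_\infty\le 1/M$ together with the gradient bound $\|\nabla v_{i+1}\|_{L^\infty(B_{1/2})}\le C(\rho^{\bar\al-\al}+1/M)$ from Corollary \ref{cor:7.3}; the $R^{(1)}_\rho$-piece is handled by rescaling/telescoping, using that each $V^{(1)}_\ell\in C^{\sigma+\bar\al}_c(B_{1/2})$ is compactly supported and satisfies $V^{(1)}_\ell(0)=\nabla V^{(1)}_\ell(0)=0$ from the Taylor-expansion construction of $P_\ell$ in Lemma \ref{lem:newapp}.

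Sending $\va\to 0$ and dividing by $\lda$ gives the upper bound; the mirror argument (choosing $\bar a^*$ to near-attain the infimum at $0$ instead of at $x$) yields the lower bound, where the ellipticity split reverses and produces $\Lda N^+(x)-\lda N^-(x)\ge -C(\rho^{\bar\al-\al}+1/M)|x|$. The main obstacle is obtaining the \emph{linear-in-$|x|$} rate rather than the more immediate Hölder rate $C|x|^{\bar\al}$ that would follow directly from Lemma \ref{lem:derivative-2} or Lemma \ref{lem:derivative-3}; this Lipschitz improvement is precisely what is needed for the subsequent $C^{\sigma+\bar\al}$-iteration, and it is made possible only by the extra flatness at $0$ encoded in the decomposition $v_\ell=V^{(1)}_\ell+V^{(2)}_\ell$, namely the vanishing of the Taylor jet of $V^{(1)}_\ell$ at $0$ and the local polynomial structure of $V^{(2)}_\ell$ near $0$.
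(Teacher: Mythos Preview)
Your proposal is correct and follows essentially the same route as the paper's proof. The only cosmetic difference is that the paper packages the concavity step through the extremal operators $\mathcal M^{\pm}_2(\tilde u_x-\tilde u)(0)$, whereas you pick an explicit near-minimizer $\bar a^*\in\mathcal A$ at $x$ (resp.\ at $0$); these are equivalent, and your estimates for $|h_{\bar a^*}(x)-h_{\bar a^*}(0)|$ and for the tail $\mathcal T(x)$ over $\R^n\setminus B_1$ coincide with the paper's computations based on Lemma~\ref{lem:derivative-1} and \eqref{eq:error delta}/\eqref{eq:grad est}.
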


\begin{proof} For some $x\in B_{1/2}$, let $\tilde u_x(z)=\tilde u(x+z)$. It follows from \eqref{eq:aux0} that
\[
\mathcal M^+_2(\tilde u_x-\tilde u)(0)\ge -\sup_{a}(h_a(x)-h_a(0)), \quad \mathcal M^-_2(\tilde u_x-\tilde u)(0)\le \sup_{a}(h_a(0)-h_a(x)).
\]
Note that for $x\in B_3$,
\begin{align*}
h_a(x)= L_a^{(i+1)} R_\rho^{(2)}(x)=\sum_{\ell=0}^i \rho^{-(i+1-\ell)\al}(L_a^{(\ell)}V_\ell^{(2)}) (\rho^{i+1-\ell}x)\end{align*}
and thus  for $\rho x\in B_{1/4}$
\begin{align*}
|h_a(x)-h_a(0)|&
=| \sum_{\ell=0}^i \rho^{-(i+1-\ell)\al} ( L_a ^{(\ell)} V_\ell^{(2)}(\rho^{(i+1-\ell)} x) - L_a ^{(\ell)} V_\ell^{(2)}(0) )|\\&
\le C \sum_{\ell=0}^i \rho^{-(i+1-\ell)\al}  (\|V_\ell^{(2)}\|_{C^4(B_{1/2})}+\|V_\ell^{(2)}\|_{L^\infty(\R^n)})|\rho ^{(i+1-\ell)} x|\\&
\le C\rho^{1-\al}  \sum_{\ell=0}^\infty \rho^{\ell(1-\al)} |x|,
\end{align*}
where Lemma \ref{lem:derivative-1} was used in the first inequality.
Hence, we have
\be\label{eq:pucci}
\mathcal M^+_2(\tilde u_x-\tilde u)(0)\ge -C\rho^{1-\al} |x|, \quad \mathcal M^-_2(\tilde u_x-\tilde u)(0)\le C\rho^{1-\al}|x|.
\ee

For every kernel $K\in \mathscr{L}_2(\lda,\Lda,\sigma)$, we have
\begin{align*}
L(\tilde u_x-\tilde u)(0)&=\int_{\R^n} (\delta \tilde u(x,y)-\delta \tilde u(0,y))K(y)\,\ud y\\&
=\int_{B_{1}} (\delta \tilde u(x,y)-\delta \tilde u(0,y))K(y)\,\ud y\\&
\quad + \int_{\R^n \setminus B_{1}} (\delta \tilde u(x,y)-\delta \tilde u(0,y))K(y)\,\ud y.
\end{align*}
Now we estimate the second term of right hand side: for $x\in B_{1/4}$

\be\label{eq:error delta}
\begin{split}
& \frac 12\int_{\R^n \setminus B_{1}} (\delta \tilde u(x,y)-\delta \tilde u(0,y))K(y)\,\ud y\\&
=\int_{\R^n} \tilde u(y)(K(y-x) \chi_{B_{1}^c}(y-x)\\
&\quad-K(y)\chi_{B_{1}^c}(y))\,\ud y
-(\tilde u(x)-\tilde u(0))\int_{\R^n\setminus B_{1}}K(y)\,\ud y\\&
\le \int_{\R^n\setminus B_{1+|x|}} | \tilde u(y)||K(y-x)-K(y)|\,\ud y\\
&\quad+\|\tilde u\|_{L^\infty(B_{1+|x|})}\int_{B_{1+|x|} \setminus B_{1-|x|}}K(y)\,\ud y+C(\rho^{\bar \al-\al}+\frac{1}{M})|x|\\&
\le C(\rho^{\bar \al-\al}+\frac{1}{M}) |x|,
\end{split}
\ee
where in the first inequality we have used \eqref{eq:grad est}, and in the last we used that $|\nabla K(y)|\le(2-\sigma)\Lda|y|^{-n-\sigma-1} $ and
\[
\begin{split}
 |\tilde u(y)| &\le \|v_{i+1}\|_{L^\infty(\R^n)} + |R^{(1)}_\rho(y)| \\&
  \le \frac{1}{M}+C\sum_{\ell=0}^i \rho^{-(i+1-\ell)(\sigma+\al)}|V_\ell^{(1)}(\rho^{i+1-\ell}y)|\\&
 \le \frac{1}{M}+C\sum_{\ell=0}^i \rho^{-(i+1-\ell)(\sigma+\al)}|\rho^{i+1-\ell} y|^{\sigma+\bar\al}\\&
 \le \frac{1}{M}+C\rho^{\bar \al-\al} |y|^ {\sigma+\bar\al}.
\end{split}
\]
Therefore, for every kernel $K\in \mathscr{L}_2(\lda,\Lda,\sigma)$, we have
\[
\int_{\R^n} (\delta \tilde u(x,y)-\delta \tilde u(0,y))K(y)\,\ud y \le \int_{B_{1}} (\delta \tilde u(x,y)-\delta \tilde u(0,y))K(y)\,\ud y+C
(\rho^{\bar \al-\al}+\frac{1}{M})|x|.
\]
Taking the supremum and using \eqref{eq:pucci}, we obtain
\[
-C\rho^{1-\al}|x|\le \mathcal M^+_2(\tilde u_x-\tilde u)\le \sup_{K}   \int_{B_{1}} (\delta \tilde u(x,y)-\delta \tilde u(0,y))K(y)\,\ud y +C(\rho^{\bar \al-\al}+\frac{1}{M})|x|.
\]
In particular, if we take the supremum over all kernels $K\in\mathscr{L}_0(\lda,\Lda,\sigma)$, we still have
\[
\sup_{\frac{\lda (2-\sigma)}{|y|^{n+\sigma}}\le K \le\frac{\Lda (2-\sigma)}{|y|^{n+\sigma}}}   \int_{B_{1}} (\delta \tilde u(x,y)-\delta \tilde u(0,y))K(y)\,\ud y \ge -C(\rho^{\bar \al-\al}+\frac{1}{M})|x|,
\]
which is equivalent to
\[
\Lda N^+(x)-\lda N^-(x) \ge -C(\rho^{\bar \al-\al}+\frac{1}{M}) |x|.
\]
The same computation with $\mathcal M^-_2(\tilde u_x-\tilde u)(0)\le C(\rho^{\bar \al-\al}+\frac{1}{M}) |x|$ provides the other inequality.
\end{proof}

One may consider $\bar w_A=(C(1/M+\rho^{\bar\al-\al}))^{-1}w_A(rx)$, where $C$ is the constant in \eqref{eq:bound wa}.  For every $\va_1$ small, we can choose $r$ smaller so that
\begin{align}
&\mbox{for evry set } A:~ |w_A| \le 1 \quad \mbox{in }\R^n, \nonumber\\&
\mbox{for evry set } A:~ \mathcal M^+_2 w_A \ge -\va_1 \quad \mbox{in }B_{1}\nonumber\\&
\frac{\lda}{\Lda} N^-(x)-\va_1|x| \le N^+(x)\le \frac{\Lda}{\lda} N^-(x)+\va_1|x|. \label{eq: holder key esti}
\end{align}
Note that $w_A$ and $\bar w_A$ share the same H\"older exponent.
\begin{lem}\label{lem:sigma+al}
We have for $x\in B_{1/4}$,
\[
N^+(x)\le C(1/M+\rho^{\bar\al-\al})|x|^{\bar\al}.
\]
\end{lem}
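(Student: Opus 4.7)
The plan is to adapt the Hölder-at-a-point argument of Caffarelli--Silvestre \cite{CS11} Section 9, exploiting the three properties of $\bar w_A$ established just before the lemma: $|\bar w_A|\le 1$ on $\R^n$, $\mathcal M^+_2 \bar w_A \ge -\va_1$ in $B_1$ (so the $\bar w_A$ are admissible for a weak Harnack inequality), and the two-sided coupling \eqref{eq: holder key esti} between $\bar N^+$ and $\bar N^-$. Since $w_A$ and $\bar w_A$ share the same Hölder exponent, it suffices to prove that the normalized $\bar N^+$ satisfies $\bar N^+(x)\le C|x|^{\bar\al}$ on $B_{1/4}$; undoing the rescaling recovers the claim with the factor $C(1/M+\rho^{\bar\al-\al})$ coming back as an overall prefactor.

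The heart of the proof is a single-scale diminish-of-oscillation estimate: there exist universal constants $\theta\in(0,1/4)$ and $\mu\in(0,1)$ with $\mu\le \theta^{\bar\al}$ such that
\[
\sup_{B_\theta}\bar N^+ \;\le\; \mu \sup_{B_1}\bar N^+ + C\va_1.
\]
Following \cite{CS11}, the key step is a dichotomy: for any fixed $A$, the function $\sup_{B_1}\bar N^+ - \bar w_A$ is a nonnegative supersolution of $\mathcal M^-_2$ with a controlled Pucci error, so the weak Harnack inequality gives either that $\bar w_A$ is uniformly bounded away from $\sup_{B_1}\bar N^+$ on $B_{1/2}$, or that it attains a value close to $\sup_{B_1}\bar N^+$ only on a set of small measure. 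Taking suprema over $A$ yields a decay for $\bar N^+$ in $B_\theta$ \emph{unless} the supremum is saturated by many different sets $A$ simultaneously; in that case, \eqref{eq: holder key esti} forces $\bar N^-$ to be comparable to $\bar N^+$, so applying the same weak Harnack argument to the family $\{-\bar w_A\}$ produces the missing decay.

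Once the one-step inequality is established, the standard geometric iteration on dyadic scales $r_k=\theta^k$ gives
\[
\sup_{B_{\theta^k}}\bar N^+ \;\le\; \mu^k \sup_{B_1}\bar N^+ + C\va_1 \sum_{j=0}^{k-1}\mu^j \;\le\; C\,\theta^{k\bar\al},
\]
provided $\va_1$ is chosen universally small so that the error series is bounded. Interpolating between consecutive scales yields $\bar N^+(x)\le C|x|^{\bar\al}$ for $x\in B_{1/4}$, and unscaling produces the stated bound. The iteration at scale $r_k$ uses the Pucci-invariant rescaling $w\mapsto r_k^{-\bar\al} w(r_k\cdot)$, which preserves the normalization $|\cdot|\le 1$ by the induction hypothesis and strictly improves the Pucci error.

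The main obstacle is the diminish-of-oscillation step, because $N^+$ is only a pointwise supremum over $A$ of subsolutions of the Pucci equation, not a subsolution itself; the coupling \eqref{eq: holder key esti} plays exactly the role that concavity of $F$ plays in the classical Evans--Krylov proof, synchronizing the decay of $\bar N^+$ and $\bar N^-$ so that the dichotomy closes. A secondary technical point is that the error terms $\va_1|x|$ in \eqref{eq: holder key esti} and the $C(\rho^{\bar\al-\al}+1/M)$ contributions from the recursive setup must not accumulate across iterations, which is ensured by taking $\va_1$ small enough that $C\va_1/(1-\mu)<1$ and by the fact that these terms have already been absorbed into the normalization of $\bar w_A$.
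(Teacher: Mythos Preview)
Your proposal is correct and takes essentially the same approach as the paper: the paper's proof consists of the single sentence ``It follows from exactly the same proof of Lemma 9.2 in \cite{CS11},'' and what you have written is a reasonable outline of that argument, using the three normalized properties established for $\bar w_A$ just before the lemma together with the $N^+/N^-$ coupling to run the Evans--Krylov-type diminish-of-oscillation iteration.
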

\begin{proof}
It follows from exactly the same proof of Lemma 9.2 in \cite{CS11}.
\end{proof}

\begin{proof}[Proof of Theorem \ref{thm:E-K-3}]
For $x\in B_{1/4}$, we have
\[
\begin{split}
& |-\Delta)^{\sigma/2}\tilde v_{i+1}(x)-(-\Delta)^{\sigma/2}\tilde v_{i+1}(0)|\\
&\quad =C\left|N^+(x)-N^-(x)+\int_{\R^n \setminus B_{1}} (\delta \tilde v_{i+1}(x,y)-\delta \tilde v_{i+1}(0,y))K(y)\,\ud y\right|\\
&\quad\le C(\rho^{\bar \al-\al}+\frac{1}{M}) |x|^{\bar\al}+C(\rho^{\bar \al-\al}+\frac{1}{M}) |x|\\&
\quad\le C(\rho^{\bar \al-\al}+\frac{1}{M})|x|^{\bar\al},
\end{split}
\]
where in the first inequality we used Lemma \ref{lem:sigma+al}, Lemma \ref{lem:9.1} and \eqref{eq:error delta}.

On the other hand, it follows from the computations in \eqref{eq:r1 holder} and Lemma \ref{lem:derivative-2} that
\[
|(-\Delta)^{\sigma/2} R^{(1)}_\rho(x)-(-\Delta)^{\sigma/2} R^{(1)}_\rho(0)|\le  C(\rho^{\bar \al-\al}+\frac{1}{M}) |x|^{\bar\al}.
\]
Thus,
\[
|(-\Delta)^{\sigma/2} v_{i+1}(x)-(-\Delta)^{\sigma/2} v_{i+1}(0)|\le C(\rho^{\bar \al-\al}+\frac{1}{M}) |x|^{\bar\al}.
\]
It follows from Lemma \ref{lem:7.2}, standard translation arguments and Schauder estimates for $(-\Delta)^{\sigma/2}$ that
\[
\|v_{i+1}\|_{C^{\sigma+\bar\al}(B_{1})}\le C(\rho^{\bar \al-\al}+\frac{1}{M}).
\]
This finishes the proof of Theorem \ref{thm:E-K-3} provided that $\rho_0^{\bar \al-\al}\le 1/(2C)$ and $M\ge 2C$.
\end{proof}

Lastly, let us discuss the case $0<\sigma_0\le\sigma\le 1$. In this case, the Evans-Krylov theorem in \cite{CS11} does not provide any improvement with respect to the $C^{1,\al}$ estimate in \cite{CS09}. However, we do not know how to use the incremental quotients method as in \cite{CS09} to prove our Theorem \ref{thm:E-K-3}.  But we still can find some $\bar\al>0$ so that Theorem \ref{thm:E-K-3} holds. Recall that in the proof of Theorem \ref{thm:E-K-3} above, there are two places where we used $\sigma>1$:
\begin{itemize}
\item [(i):] In \eqref{eq:estimate of T}, we used $\sigma\ge\sigma_0>1>\bar\al$ so that the integral there is universally bounded;

\item [(ii):] In \eqref{eq:grad est u}, we have the gradient estimate for $v_{i+1}$ when $\sigma\ge\sigma_0>1$. This was used in proving \eqref{eq:error delta} in the proof of Lemma \ref{lem:9.1} and \eqref{eq: holder key esti}.
\end{itemize}
It is clear that the use in (i) is not essential, since we can assume that $\bar\al < \sigma_0$ when $0<\sigma_0\le\sigma\le 1$. The use in (ii) is not essential, either,  since we can proceed using the H\"older estimates in \cite{CS09} that
\be\label{eq:holder vi+1}
\|v_{i+1}\|_{C^{\beta}(B_{1/2})}\le C(\rho^{\bar \al-\al}+\frac{1}{M})
\ee
instead of \eqref{eq:grad est u}, where $\beta\in (0,1)$ is a constant depending only on $n,\sigma_0,\lda,\Lda$. Consequently, the statement of  Lemma \ref{lem:9.1} becomes
\[
\frac{\lda}{\Lda} N^-(x)-C(\rho^{\bar \al-\al}+1/M) |x|^\beta \le N^+(x) \le \frac{\Lda}{\lda} N^-(x)+C(\rho^{\bar \al-\al}+1/M) |x|^\beta\quad\forall \ x\in B_{1/4},
\]
and \eqref{eq: holder key esti} becomes
\[
\frac{\lda}{\Lda} N^-(x)-\va_1|x|^{\beta} \le N^+(x)\le \frac{\Lda}{\lda} N^-(x)+\va_1|x|^{\beta}.
\]
The same proof of Lemma 9.2 in \cite{CS11} will give that there exists some $\bar\beta>0$ depending only on $\sigma_0,n,\lda,\Lda$ such that
\[
N^+(x)\le C(1/M+\rho^{\bar\al-\al})|x|^{\bar\beta}\quad\forall \ x\in B_{1/4},
\]
and we will choose $\bar\al=\bar\beta$ (which might be smaller than the one in \eqref{eq:E-K-1} when $\sigma<1$ if one consider the best possible one due to the $C^{1,\al}$ estimates in \cite{CS09} even for $\sigma$ very small). 

Thus, we can prove that 
\begin{thm}\label{thm:E-K-3-1}
For $\sigma_0\in (0,2)$ and $\sigma\in [\sigma_0,2)$, there exists a constant $\bar\al\in (0,1)$ depending only on $n,\sigma_0,\lda $ and $ \Lda$ so that Theorem \ref{thm:E-K-3} holds.
\end{thm}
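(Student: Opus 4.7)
The plan is to revisit the proof of Theorem \ref{thm:E-K-3} and locate precisely the steps that required $\sigma>1$, then substitute them with tools available in the low-order regime. A quick audit of the preceding subsections identifies exactly two uses of $\sigma>1$. First, in the tail bound \eqref{eq:estimate of T}, where integrability of $(2-\sigma)|y|^{\bar\al-n-\sigma}$ on $\{|y|>1/64\}$ was ensured by $\sigma>\bar\al$. Second, in \eqref{eq:grad est u}, where Corollary \ref{cor:7.3} is promoted from a bound on $\mathcal M_0^\pm v_{i+1}$ to a Lipschitz estimate on $v_{i+1}$, which is then transmitted to $\tilde v_{i+1}$ through \eqref{eq:grad est} and ultimately used when bounding the tail difference \eqref{eq:error delta} inside Lemma \ref{lem:9.1}.

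The first obstruction is purely cosmetic: since $\bar\al$ is a universal constant we are free to choose, imposing $\bar\al<\sigma_0$ from the outset restores the integrability, and this constraint costs us nothing because we only care about existence of some positive $\bar\al$. For the second obstruction, I would retain $C^\beta$ control in place of $C^1$ control. Concretely, Corollary \ref{cor:7.3} (which shows that for every $L\in\mathscr L_0(\lda,\Lda,\sigma)$ the quantity $|Lv_{i+1}|$ is bounded by $C(\rho^{\bar\al-\al}+1/M)$ on $B_1$) combined with the H\"older estimate of Caffarelli--Silvestre \cite{CS09} for solutions of $\mathcal M^\pm_0 u=\mbox{(bounded)}$ gives
\[
\|v_{i+1}\|_{C^{\beta}(B_{1/2})}\le C\bigl(\rho^{\bar\al-\al}+\tfrac{1}{M}\bigr)
\]
for some universal $\beta\in(0,1)$ depending only on $n,\sigma_0,\lda,\Lda$. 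This replaces \eqref{eq:grad est u} and, after the usual bound on $\nabla R_\rho^{(1)}$ is replaced by the corresponding $C^\beta$ bound using $|V_\ell^{(1)}(x)|\le C|x|^{\sigma+\bar\al}$, also replaces \eqref{eq:grad est}.

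With $|x|$ promoted (or rather demoted) to $|x|^\beta$, the proof of Lemma \ref{lem:9.1} goes through with $|x|$ systematically replaced by $|x|^\beta$ inside both \eqref{eq:error delta} and the final conclusion, so that \eqref{eq: holder key esti} becomes
\[
\frac{\lda}{\Lda}N^-(x)-\va_1|x|^\beta\le N^+(x)\le \frac{\Lda}{\lda}N^-(x)+\va_1|x|^\beta.
\]
Then the iteration lemma underlying Lemma \ref{lem:sigma+al} (Lemma 9.2 in \cite{CS11}) still applies and produces a universal exponent $\bar\beta>0$ depending only on $n,\sigma_0,\lda,\Lda$ with $N^+(x)\le C(1/M+\rho^{\bar\al-\al})|x|^{\bar\beta}$ on $B_{1/4}$. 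Setting $\bar\al:=\min(\bar\beta,\sigma_0/2)$ and closing the argument with Schauder estimates for $(-\Delta)^{\sigma/2}$ exactly as at the end of the previous subsection yields $\|v_{i+1}\|_{C^{\sigma+\bar\al}(B_1)}\le 1$ once $\rho_0$ is chosen small and $M$ large, completing the induction.

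The main obstacle I anticipate is bookkeeping: one must verify that no intermediate estimate (in particular the Pucci-type bounds \eqref{eq:pucci}, the split of $Lv_{i+1}$ into a ball and its complement, and the $L^\infty$ bounds on $\tilde u$) tacitly used a Lipschitz modulus that a H\"older modulus cannot support. Each instance of $|x|$ arising from a mean value argument must be replaced by $|x|^\beta$ and the constants re-tracked to confirm they remain universal; this is where errors typically creep in. Once that is verified, universality of $\bar\beta$ from \cite{CS09} is exactly what ensures the final $\bar\al$ depends only on $n,\sigma_0,\lda,\Lda$ as asserted.
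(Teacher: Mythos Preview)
Your proposal is correct and follows essentially the same route as the paper: the paper also isolates exactly the two uses of $\sigma>1$ (the tail integral \eqref{eq:estimate of T} and the Lipschitz bound \eqref{eq:grad est u}), handles the first by taking $\bar\al<\sigma_0$, replaces the second by the $C^\beta$ estimate \eqref{eq:holder vi+1} from \cite{CS09}, and then reruns Lemma \ref{lem:9.1} and the iteration from \cite{CS11} with $|x|^\beta$ in place of $|x|$ to obtain a universal $\bar\beta$, finally setting $\bar\al=\bar\beta$. Your choice $\bar\al=\min(\bar\beta,\sigma_0/2)$ is slightly more explicit than the paper's in enforcing both constraints simultaneously, but the argument is otherwise identical.
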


\section{Schauder estimates}\label{sec:schauder}
In this section, we will prove the Schauder estimates in Theorem \ref{thm:schauder-nl}. We start with a lemma. It follows quickly from comparison principles and we omit the proof here.
\begin{lem}\label{lem:mp}
Suppose that every $K_a(y)\in\mathscr L_2(\lda,\Lda,\sigma)$ with $\sigma\ge\sigma_0>0$, $c_0$ is a constant. Let $u$ be the viscosity solution of
\[
\begin{split}
\inf_{a\in\mathcal A}\int_{\R^n} \delta u(x,y)K_a(y)\,\ud y&=c_0\quad\mbox{in }B_1\\
u&=g\quad\mbox{in }\R^n\setminus B_1.
\end{split}
\]
Then there exists a constant $C$ depending only on $\lda$, $\Lda$, $n$ and $\sigma_0$ such that
\[
 \|u\|_{L^\infty(\R^n)}\le C( \|g\|_{L^\infty(\R^n\setminus B_1)}+ |c_0|).
\]
\end{lem}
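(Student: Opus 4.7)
The plan is to reduce the equation to two one-sided Pucci inequalities and then apply the standard nonlocal comparison principle against an explicit barrier.

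\emph{Reduction to extremal inequalities.} Since every $K_a\in\mathscr L_2(\lda,\Lda,\sigma)\subset\mathscr L_0(\lda,\Lda,\sigma)$, each operator $L_{K_a}$ lies in the extremal class defining the Pucci operators $\mathcal M^{\pm}_0$. Bounding the infimum in \eqref{eq:main} in the trivial ways $\mathcal M^-_0 w\le L_{K_a}w\le\mathcal M^+_0 w$ immediately gives, in the viscosity sense,
\[
\mathcal M^+_0 u\ge c_0\quad\text{and}\quad \mathcal M^-_0 u\le c_0\quad\text{in }B_1.
\]

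\emph{Comparison.} For the upper bound I would use a comparison function of the form $\Phi^+(x):=\|g\|_{L^\infty(\R^n\setminus B_1)}+A\,\Psi(x)$, where $\Psi$ is a fixed, radial, nonnegative, compactly supported $C^2$ barrier with $\Psi\ge 1$ on $\overline{B_1}$ and satisfying
\[
\mathcal M^+_0 \Psi(x)\le -c_1<0\quad\text{for }x\in B_1,
\]
with $c_1$ depending only on $n,\lda,\Lda,\sigma_0$. Such a barrier is standard in the nonlocal literature: one takes a smoothed truncated paraboloid, or the fractional cap $(R^2-|x|^2)_+^{\sigma/2}$ with $R$ sufficiently large; the local contribution $\int_{|y|\le r_0}\delta\Psi(x,y)K(y)\,\ud y$ is bounded above by a negative universal constant using $\delta\Psi(x,y)\le -c|y|^2$ near the origin, $K(y)\ge(2-\sigma)\lda|y|^{-n-\sigma}$, and the cancellation identity $(2-\sigma)\int_0^1 r^{1-\sigma}\,\ud r=1$. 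Choosing $A=|c_0|/c_1$, one has $\Phi^+\ge\|g\|_{L^\infty(\R^n\setminus B_1)}\ge g=u$ on $\R^n\setminus B_1$ and
\[
\mathcal M^+_0 \Phi^+=\tfrac{|c_0|}{c_1}\mathcal M^+_0 \Psi\le -|c_0|\le c_0\quad\text{in }B_1.
\]
The viscosity comparison principle for the extremal Pucci operator $\mathcal M^+_0$ (see \cite{CS09}) then yields $u\le\Phi^+$ in $B_1$, so $\sup_{B_1}u\le C(\|g\|_{L^\infty(\R^n\setminus B_1)}+|c_0|)$. A symmetric argument using $\mathcal M^-_0 u\le c_0$ and $-\Phi^+$ produces the matching lower bound; combining the two and using $\|u\|_{L^\infty(\R^n\setminus B_1)}=\|g\|_{L^\infty(\R^n\setminus B_1)}$ completes the proof.

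\emph{Main obstacle.} The only nontrivial point is ensuring the barrier constant $c_1$ does not deteriorate as $\sigma\to 2$ (or as $\sigma\to\sigma_0$). This is exactly what the $(2-\sigma)$ prefactor in \eqref{eq:elliptic} is designed to handle, and once this uniformity is in hand, the argument is entirely routine viscosity comparison --- consistent with the authors' remark that the proof follows quickly from comparison principles.
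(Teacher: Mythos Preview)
Your proposal is correct and matches the paper's intended approach: the authors explicitly write that the lemma ``follows quickly from comparison principles'' and omit the details, and your barrier-plus-comparison argument is precisely the standard way to unpack that remark. The only point worth keeping an eye on is the one you already flag---the uniformity of the barrier constant $c_1$ in $\sigma\in[\sigma_0,2)$---which is handled by the $(2-\sigma)$ normalization in the kernel class and is treated in \cite{CS09}.
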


\begin{proof}[Proof of Theorem \ref{thm:schauder-nl}]
The strategy of the proof is to find a sequence of approximation solutions which are sufficiently regular, and the error between the genuine solution and the approximation solutions can be controlled in a desired rate. We divide the proof into four steps.

\medskip

\emph{Step 1}: Normalization and rescaling.

\medskip
Let $w_0$ be the viscosity solution of
\[
\begin{split}
I_0 w_0(x):=\inf_{a\in\mathcal A}\int_{\R^n} \delta w_0(x,y)K_a(0,y)\,\ud y-f(0)&=0\quad\mbox{in }B_4\\
w_0&=u\quad\mbox{in }\R^n\setminus B_4.
\end{split}
\]
Then by Lemma \ref{lem:mp} we have that
\[
 \|w_0\|_{L^\infty(\R^n)}\le C( \|u\|_{L^\infty(\R^n)}+ \|f\|_{L^\infty(B_5)}).
\]
Thus by normalization, we may assume that
\[
  \|w_0\|_{L^\infty(\R^n)}\le 1/2, \ \|u\|_{L^\infty(\R^n)}+ \|f\|_{L^\infty(B_5)}\le 1/2.
\]

For some universal small positive constant $\gamma<1$, which will be chosen later in \eqref{eq:choice of eta}, we may also assume that $|f(x)-f(0)|\le\gamma |x|^\al$ and
\be\label{eq:K holder small}
 \int_{\R^n}|K_a(x,y)-K_a(0,y)|\min(|y|^2,r^2)dy \le \gamma |x|^\al r^{2-\sigma}
\ee
for all $a\in\mathcal A, r\in (0,1]$, $x\in B_5$. This can be achieved by the scaling
for $s<1$ small that if we let
\be\label{eq:rescaling procedure}
\begin{split}
\tilde K_a(x,y)&= s^{n+\sigma}K_a(sx,sy)\in\mathscr L_2(\lda,\Lda,\sigma),\\
 \tilde u(x)&=u(sx), \\
 \tilde f(x)&=s^{\sigma}f(sx),
\end{split}
\ee
then we see that
\[
\tilde I \tilde u(x)=\inf_{a\in\mathcal A}\tilde L_a\tilde u(x)=\tilde f(x)\quad \mbox{in }B_5,
\]
where
\[
\tilde L_a \tilde u(x):=\int_{\R^n} \delta \tilde u(x,y) \tilde K_a(x,y)\,\ud y.
\]
It follows that if we choose $s$ sufficiently small, then
\[
|\tilde f(x)-\tilde f(0)|\leq M_f s^{\sigma+\al}|x|^\al\leq  \gamma |x|^\al\le 5\gamma,
\]
and
\[
 \int_{\R^n}|\tilde K_a(x,y)-\tilde K_a(0,y)|\min(|y|^2,r^2)\ud y \le 2\Lda s^\al |x|^\al r^{2-\sigma}\le\gamma |x|^\al r^{2-\sigma}
\]
for all $a\in\mathcal A, r\in (0,1]$, $x\in B_5$. Thus, we may consider the equation of $\tilde u$ instead.

Consequently, it follows from \eqref{eq:K holder small} that ($\|\cdot\|_*$ is defined in \eqref{eq:norm star} in the Appendix)
\[
 \| I- I_0\|_*\le 25 \gamma.
\]
Indeed, if $x\in B_5, h\in C^2(x), \|h\|_{L^\infty(\R^n)}\le M, |h(y)-h(x)-(y-x)\cdot \nabla h(x)|\le \frac{M}{2}|x-y|^2$  for every $y\in B_1(x)$, we have
\begin{align}
\| I- I_0\|_* &\leq \sup_{x,a,h}\frac{1 }{1+M} \int_{\R^n} |\delta h(x,y)|| K_a (x,y)- K_a(0,y)|\,\ud y\nonumber\\&
\le \sup_{a}\frac{M}{1+M} \left( \int_{B_1}  |y|^2 | K_a (x,y)- K_a(0,y)| +4\int_{\R^n\setminus B_1}   | K_a (x,y)- K_a(0,y)|\right)\nonumber\\&
<5\gamma|x|^\al\le 25\gamma.\label{eq:small operator-1}
\end{align}

\medskip

\emph{Step 2}: From now on, we denote
\[
\rho=\rho_0\mbox{ as the one in Theorem \ref{thm:E-K-3}, which is a universal constant}.
\]
We claim that we can find a sequence of functions $w_i$, $i=0,1,2,\cdots$, such that for all $i$,
\be\label{eq:for k-nl}
\inf_{a\in\mathcal A}\int_{\R^n} \sum_{\ell=0}^{i}\delta w_\ell(x,y)K_a(0,y)\ud y=f(0)\quad \mbox{in }B_{4\cdot \rho^{i}},
\ee
and
\be\label{eq:zero outside-nl}
(u-\sum_{l=0}^i w_\ell)(\rho^{i}x)=0\quad\mbox{for all }x\in \R^n\setminus B_4,
\ee
and
\be\label{eq:est for k-nl}
\begin{split}
\|w_i\|_{L^\infty(\R^n)}&\le \rho^{(\sigma+\al)i},\\
\|D w_i\|_{L^\infty(B_{(4-\tau)\cdot \rho^{i}})}&\le c_2 \rho^{(\sigma+\al-1)i}\tau^{-1},\\
\|D^2 w_i\|_{L^\infty(B_{(4-\tau)\cdot \rho^{i}})}&\le c_2 \rho^{(\sigma+\al-2)i}\tau^{-2},\\
[D^2 w_i]_{C^{\sigma+\bar\al-2}(B_{(4-\tau)\cdot \rho^{i}})}&\le c_2 \rho^{(\al-\bar\al)i}\tau^{-4},\\
\end{split}
\ee
and
\be\label{eq:close app-nl}
\|u-\sum_{\ell=0}^i w_\ell\|_{L^\infty(\R^n)}\le \rho^{(\sigma+\al)(i+1)},
\ee
and
\be\label{eq:holer close app-nl}
[ u-\sum_{\ell=0}^i w_\ell]_{C^{\al_1}( B_{(4-3\tau)\cdot \rho^{i}})}\le 8c_1 \rho^{(\sigma+\al-\al_1)i}\tau^{-4},
\ee
where $\tau$ is an arbitrary constant in $(0,1]$, $\al_1$ and $c_1$ are positive constants depending only on $n$, $\lda$, $\Lda$, $\gamma_0$ and $\bar\al$, and $c_2$ is the constant in \eqref{eq:E-K-3}.

Then Theorem \ref{thm:schauder-nl} will follow from this claim and standard arguments. Indeed, we have,  when $1<\sigma+\al<2$ and for $\rho^{i+1}\le |x|<\rho^{i}$,
\[
\begin{split}
&|u(x,0)-\sum_{\ell=0}^\infty w_\ell(0,0)-\sum_{\ell=0}^\infty \nabla_x w_\ell(0,0)\cdot x|\\
&\le |u(x,0)-\sum_{\ell=0}^i w_\ell(x,0)|+|\sum_{\ell=0}^i w_\ell(x,0)-\sum_{\ell=0}^i w_\ell(0,0)-\sum_{\ell=0}^i \nabla_x w_\ell(0,0)\cdot x|\\
&\quad+|\sum_{\ell=i+1}^\infty w_\ell(0,0)|+|\sum_{\ell=i+1}^\infty \nabla_x w_\ell(0,0)\cdot x|\\
&\le \rho^{(\sigma+\al)(i+1)}+c_2 |x|^2\sum_{\ell=0}^i\rho^{(\sigma+\al-2)\ell}+\sum_{\ell=i+1}^\infty  \rho^{(\sigma+\al)\ell}+|x|\sum_{\ell=i+1}^\infty c_2 \rho^{(\sigma+\al-1)\ell}\\
&\le C_2|x|^{\sigma+\al}.
\end{split}
\]

When $\sigma+\al>2$ and for $\rho^{i+1}\le |x|<\rho^{i}$,
\[
\begin{split}
&|u(x)-\sum_{\ell=0}^\infty w_\ell(0)-\sum_{\ell=0}^\infty Dw_\ell(0)\cdot x-\sum_{\ell=0}^\infty \frac 12 x^TD^2w_\ell(0)x|\\
&\le |u(x)-\sum_{\ell=0}^i w_\ell(x)|+|\sum_{\ell=0}^i w_\ell(x)-\sum_{\ell=0}^i w_\ell(0)-\sum_{\ell=0}^i Dw_\ell(0)\cdot x-\sum_{\ell=0}^i \frac 12 x^TD^2w_\ell(0)x|\\
&\quad+|\sum_{\ell=i+1}^\infty w_\ell(0)|+|\sum_{l=i+1}^\infty Dw_\ell(0)\cdot x|+\frac 12 |\sum_{\ell=i+1}^\infty x^T D^2w_\ell(0)x|\\
&\le \rho^{(\sigma+\al)(i+1)}+2 c_2 |x|^{\sigma+\bar\al}\sum_{\ell=0}^i\rho^{(\al-\bar\al)\ell}+\sum_{\ell=i+1}^\infty  \rho^{(\sigma+\al)\ell}+|x|\sum_{\ell=i+1}^\infty c_2 \rho^{(\sigma+\al-1)\ell}\\
&\quad+|x|^2\sum_{\ell=i+1}^\infty c_2 \rho^{(\sigma+\al-2)\ell}\\
&\le C_3|x|^{\sigma+\al}.
\end{split}
\]
This proves the estimate \eqref{eq:schauder estimate}.

Now we are left to prove this claim. Before we provide the detailed proof, we would like to first mention the idea and the structure of \eqref{eq:for k-nl}-\eqref{eq:holer close app-nl}:
\begin{itemize}
\item Solving \eqref{eq:for k-nl} and \eqref{eq:zero outside-nl} inductively is how we construct this sequence of functions $\{w_i\}$.

\item \eqref{eq:close app-nl} will follow from the approximation lemmas in the appendix, where \eqref{eq:holer close app-nl} will be used. 

\item \eqref{eq:est for k-nl} will follow from \eqref{eq:close app-nl}, maximum principles and the recursive Evans-Krylov theorem, Theorem \ref{thm:E-K-3}.

\end{itemize}

\medskip

\emph{Step 3}: Prove the claim for $i=0$.

\medskip

Let $u$ be a viscosity solution of \eqref{eq:main schauder}.
It follows from the H\"older estimates in \cite{CS09}, standard scaling and covering (contributing at most a factor of $4/\tau$) arguments that there exist constants $\al_1\in (0,1), c_1>0$, depending only on $n,\lda, \Lda, \gamma_0, \bar\al$,  such that for $\tau\in (0,1]$
\be \label{eq:holder}
\|u\|_{C^{\al_1}(B_{4-\tau})} \leq c_1\tau^{-1-\al_1}\left(\|u\|_{L^\infty(\R^n)}+\|f\|_{L^\infty(B_4)}\right).
\ee

Let $w_0$ be the one in Step 1 and $c_2$ be the constant in \eqref{eq:E-K-3}. Then by Theorem \ref{thm:E-K-3}, standard scaling, translation and covering arguments that
\be\label{eq:w0 esti}
\begin{split}
\|w_0\|_{L^\infty(\R^n)}\leq 1, \quad &\|D w_0\|_{L^\infty(B_{4-\tau})} \leq c_2 \tau^{-1}, \\
\quad \|D^2 w_0\|_{L^\infty(B_{4-\tau})} \leq c _2 \tau^{-2},\quad &[D^2 w_0]_{C^{\sigma+\bar\al-2}(B_{4-\tau})} \leq c _2\tau^{-4}.
\end{split}
\ee

Let us set up to apply the approximation lemma, Lemma \ref{lem:appr0}, in the Appendix. Let $\va=\rho^{3}\le\rho^{\sigma+\al}$ and $M=1$. Let us fixed a modulus continuity $\omega_1(r)=r^{\al_1}$. Then for these $\omega_1,\va, M$, there exist $\eta_1$ (small) and $R$ (large) so that Lemma \ref{lem:appr0} holds. We can assume that the rescaling in \eqref{eq:rescaling procedure} make the equation  hold in a very large ball containing $B_{2R}$ and $|u(x)-u(y)|\le\omega_1(|x-y|)$ for every $x\in B_R\setminus B_4$ and $y\in\R^n\setminus B_4$. The latter one can be done due to \eqref{eq:holder}. We will choose $\gamma<\eta_1/25$ in \eqref{eq:choice of eta}.
Then by the rescaling in Step 1, we can conclude from Lemma \ref{lem:appr0} that
\[
\|u-w_0\|_{L^\infty(B_4)}\leq \va\le \rho^{\sigma+\al},
\]
and thus,
\[
\|u-w_0\|_{L^\infty(\R^n)}\le \|u-w_0\|_{L^\infty(B_4)}\leq \va\le \rho^{\sigma+\al}.
\]
This proves that \eqref{eq:for k-nl}, \eqref{eq:zero outside-nl}, \eqref{eq:est for k-nl} and \eqref{eq:close app-nl} hold for $i=0$.

Let $
v(x)=u(x)-w_{0}(x).
$
Since $w_0\in C^{\sigma+\bar\al}$, $v$ is a solution of
\[
\begin{split}
I^{(0)} v:&=\inf_{a\in \mathcal A}\int_{\R^n}\delta v(x,y)K_a (x,y)+\delta w_{0}(x,y)K_a(x,y)\ud y-  f(0)\\
&= f( x)-  f(0)\quad\mbox{in }B_{4}.
\end{split}
\]
It is clear that $I^{(0)}$ is elliptic with respect to $\mathscr L_0(\lda,\Lda,\sigma)$. Moreover, for $x\in B_{4-2\tau}$,
\begin{align}
|I^{(0)} 0|:&=|\inf_{a\in \mathcal A}\int_{\R^n}\delta w_{0}(x,y)K_a(x,y)\ud y- f(0)|\nonumber\\
&=|\inf_{a\in \mathcal A}\int_{\R^n}\delta (w_0(x,y))K_a(x,y)\ud y- \inf_{a\in \mathcal A}\int_{\R^n}\delta (w_0(x,y))K_a(0,y)\ud y|\nonumber\\
&\le \sup_{a\in \mathcal A} \int_{\R^n}|\delta w_0( x,y)||K_a( x,y)-K_a(0,y)|\ud y\nonumber\\
&\le  \sup_{a\in \mathcal A}\left(\int_{B_\tau}c_2\tau^{-2}|y|^2|K_a(x,y)-K_a(0,y)|\ud y +4\int_{\R^n\setminus B_\tau}|K_a( x,y)-K_a(0,y)|\ud y\right)\nonumber\\
&\le \gamma\left(c_2+4\right)|x|^\al\tau^{-\sigma}\le \gamma4\left(c_2+4\right)\tau^{-\sigma}\le\tau^{-\sigma},\label{eq:cal error}
\end{align}
where \eqref{eq:w0 esti} was used in the second inequality, and \eqref{eq:K holder small} was used in the third inequality, and \eqref{eq:choice of eta} was used in the last inequality. It follows from H\"older estimates established in \cite{CS09}, standard scaling and covering arguments (contributing at most a factor of $4/\tau$) we have
\[
\|v\|_{C^{\al_1}(B_{4-3\tau})}\le c_1\tau^{-\al_1-1}(\tau^{-\sigma}+4\gamma +1)\le 8c_1\tau^{-4},
\]
and thus,
\[
[u-w_0]_{C^{\al_1}(B_{4-3\tau})}\le 8c_1\tau ^{-4}.
\]
This finishes the proof of \eqref{eq:holer close app-nl} for $i=0$. 

\medskip

\emph{Step 4}: We assume all of \eqref{eq:for k-nl}, \eqref{eq:zero outside-nl}, \eqref{eq:est for k-nl}, \eqref{eq:close app-nl} and \eqref{eq:holer close app-nl} hold up to $i\ge 0$, and we will show that they all hold for $i+1$ as well. 

\medskip

Let
\[
\begin{split}
W(x)&=\rho^{-(i+1)(\sigma+\al)}\left(u-\sum_{\ell=0}^i w_\ell\right)(\rho^{i+1}x),\\
v_\ell&=\rho^{-(\sigma+\al)\ell}w_{\ell}(\rho^\ell x),
\end{split}
\]
and
\[
K^{(i+1)}(x,y)=\rho^{(n+\sigma)(i+1)}K(\rho^{i+1}x,\rho^{i+1}y).
\]
Since $w_\ell\in C^{\sigma+\bar\al}$ for each $\ell$, then $W$ is a solution of
\[
\begin{split}
I^{(i+1)}W=\rho^{-(i+1)\al}f(\rho^{i+1}x)-\rho^{-(i+1)\al}f(0)\quad\mbox{in } B_{4/\rho},
\end{split}
\]
where
\[
\begin{split}
&I^{(i+1)}W\\
&:= \inf_{a\in\mathcal A}\int_{\R^n} \left(\delta W(x,y)+\sum_{\ell=0}^{i}\rho^{-(i+1)(\sigma+\al)}\delta w_\ell(\rho^{i+1}x,\rho^{i+1}y)\right)K^{(i+1)}_a(x,y)\ud y\\
&\quad\quad\quad-\rho^{-(i+1)\al}f(0)\\
&= \inf_{a\in\mathcal A}\int_{\R^n} \left(\delta W(x,y)+\sum_{\ell=0}^{i}\rho^{-(i+1-\ell)(\sigma+\al)}\delta v_\ell(\rho^{i+1-\ell}x,\rho^{i+1-\ell}y)\right)K^{(i+1)}_a(x,y)\ud y\\
&\quad\quad\quad-\rho^{-(i+1)\al}f(0).
\end{split}
\]
It is clear that $I^{(i+1)}$ is elliptic with respect to $\mathscr L_0(\lda,\Lda,\sigma)$. Denote
\[
\begin{split}
&I^{(i+1)}_0v\\
&:=\inf_{a\in\mathcal A}\int_{\R^n} \left(\delta v(x,y)+\sum_{\ell=0}^{i}\rho^{-(i+1)(\sigma+\al)}\delta w_\ell(\rho^{i+1}x,\rho^{i+1}y)\right)K^{(i+1)}_a(0,y)\ud y\\
&\quad\quad\quad-\rho^{-(i+1)\al}f(0)\\
&=\inf_{a\in\mathcal A}\int_{\R^n} \left(\delta v(x,y)+\sum_{\ell=0}^{i}\rho^{-(i+1-\ell)(\sigma+\al)}\delta v_\ell(\rho^{i+1-\ell}x,\rho^{i+1-\ell}y)\right)K^{(i+1)}_a(0,y)\ud y\\
&\quad\quad\quad-\rho^{-(i+1)\al}f(0),
\end{split}
\]
which is also elliptic with respect to $\mathscr L_0(\lda,\Lda,\sigma)$.
Let $v_{i+1}$ be the solution of
\[
\begin{split}
I^{(i+1)}_0v_{i+1}&=0\quad\mbox{in }B_4\\
v_{i+1}&=W\quad\mbox{in }\R^n\setminus B_4.
\end{split}
\]
It follows that
\be\label{eq:uniform bound for sequence}
\|v_{i+1}\|_{L^\infty(\R^n)}\le \|W\|_{L^\infty(\R^n)}\le 1.
\ee
Indeed, we first know from the nonlocal Evans-Krylov theorem that $v_{i+1}\in C^{\sigma+\bar\al}$ and thus $I^{(i+1)}_0v_{i+1}$ can be calculated point-wisely. Since $I^{(i+1)}_0 0=0$ which follows from \eqref{eq:for k-nl}, we have for $x\in B_4$,
\[
 \inf_{a\in\mathcal A}\int_{\R^n}\delta v_{i+1}(x,y)K^{(i+1)}_a(0,y)\ud y\le I^{(i+1)}_0v_{i+1}(x)\le \sup_{a\in\mathcal A}\int_{\R^n}\delta v_{i+1}(x,y)K^{(i+1)}_a(0,y)\ud y.
\]
 We also know from then boundary regularity in \cite{CS10} that $v_{i+1}\in C(\overline B_4)$. Suppose that there exists $x_0\in B_4$ so that $v_{i+1}(x_0)=\max_{\overline B_4}v_{i+1} >\|W\|_{L^\infty(\R^n\setminus B_4)}$. Then
\[
 \sup_{a\in\mathcal A}\int_{\R^n}\delta v_{i+1}(x_0,y)K^{(i+1)}_a(0,y)\ud y<0,
\]
which is a contradiction to $I^{(i+1)}_0v_{i+1}(x_0)=0$. Similarly, we have $v_{i+1}(x)\ge -\|W\|_{L^\infty(\R^n\setminus B_4)}$ for $x\in B_4$. This proves \eqref{eq:uniform bound for sequence}.

Again, by our induction hypothesis \eqref{eq:for k-nl}, it follows that for all $m=0,1,\cdots,i,$
\[
\inf_{a\in\mathcal A}\int_{\R^n} \left(\sum_{\ell=0}^{m}\rho^{-(m-\ell)(\sigma+\al)}\delta v_\ell(\rho^{m-\ell}x,\rho^{m-\ell}y)\right)K^{(m)}_a(0,y)\ud y=\rho^{-m\al}f(0)\quad\mbox{in }B_{4}.
\]
It follows from Theorem \ref{thm:E-K-3} and standard scaling arguments that 
\[
\begin{split}
\|D v_{i+1}\|_{L^\infty(B_{4-\tau})}&\le c_2\tau^{-1},\\
 \|D^2 v_{i+1}\|_{L^\infty(B_{4-\tau})}&\le c_2 \tau^{-2},\\
 [D^2 v_{i+1}]_{C^{\sigma+\bar\al-2}(B_{4-\tau})}&\le c_2 \tau^{-4}.
\end{split}
\]

We want to apply Lemma \ref{lem:appr} to the equations of $W$ and $v_{i+1}$ so that we have $|W-v_{i+1}|\le \rho^{\sigma+\al}$ in $B_4$. 

First of all, $|W|\le 1$ in $\R^n$, $W\equiv 0$ in $\R^n\setminus B_{4/\rho}$, and $[W]_{C^{\al_1}(B_{(4-3\tau)/\rho})}\le 8c_1\rho^{\al_1-\sigma-\al}\tau^{-4}\le 8c_1\rho^{-3}\tau^{-4}$. Secondly, it follows from similar computations in \eqref{eq:r1 holder}, and making use of \eqref{eq:est for k-nl} and Lemma \ref{lem:derivative-3} that
\[
[L^{(i+1)}_a R_\rho]_{C^{\bar\al}(B_4)}\le M_0\quad\forall~a\in\mathcal A,
\]
where $M_0$ is a universal constant independent of $i$,
\[
\begin{split}
L^{(i+1)}_a v=\int_{\R^n}\delta v(x,y)K^{(i+1)}_a(0,y)\,\ud y,\quad
R_\rho(x)=\sum_{\ell=0}^i \rho^{-(i+1-\ell)(\sigma+\al)} v_\ell(\rho^{i+1-\ell}x).
\end{split}
\]
Lastly, we are going to show that we can choose $\gamma$ sufficiently small so that
\be\label{eq:small norm i+1}
\|I^{(i+1)}-I_0^{(i+1)}\|_*\le\eta_2\quad\mbox{in }B_4
\ee
and we can apply Lemma \ref{lem:appr}, where $\eta_2$ is the one in \eqref{lem:appr} with $\va=\rho^{3}\le\rho^{\sigma+\al}$, $M_0$ as above, $M_1=1, M_2=8c_1\rho^{-3}$, $M_3=c_2$. 

For $x\in B_4, h\in C^2(x), \|h\|_{L^\infty(\R^n)}\le M, |h(y)-h(x)-(y-x)\cdot \nabla h(x)|\le \frac{M}{2}|x-y|^2$  for every $y\in B_1(x)$, we have
\[
\begin{split}
&\|I^{(i+1)}-I_0^{(i+1)}\|_*\\
&\quad\le \sup_{a, h, x}|\int_{\R^n}\delta h(x,y)(K^{(i+1)}_a(x,y)-K^{(i+1)}_a(0,y))\ud y|\\
&\quad\quad+\sum_{\ell=0}^{i}\sup_{a\in\mathcal A}|\int_{\R^n}\rho^{-(i+1)(\sigma+\al)}\delta w_\ell(\rho^{i+1}x,\rho^{i+1}y)(K^{(i+1)}_a(x,y)-K^{(i+1)}_a(0,y))\ud y|\\
&\quad=I_1+I_2.\\
\end{split}
\]
It follows from the same computations in \eqref{eq:small operator-1} that
\[
|I_1|\le25\gamma.
\]
For $a\in\mathcal A$, $\ell=0,1,\cdots, i$ and for $x\in B_{(4-2\tau)/\rho}$, we have, similar to \eqref{eq:cal error},
\be\label{eq:acc error}
\begin{split}
&|\int_{\R^n} \delta w_\ell (\rho^{i+1}x,\rho^{i+1}y)(K_a^{(i+1)}(0,y)-K_a^{(i+1)}(x,y))\ud y|\\
&\le\rho^{\sigma(i+1)}\int_{\R^n}|\delta w_\ell(\rho^{i+1}x,y)||K_a(0,y)-K_a(\rho^{i+1}x,y)|\ud y\\
&\le \rho^{\sigma(i+1)}\int_{B_{\rho^{\ell}\tau}}c_2\rho^{(\sigma+\al-2)\ell}\tau^{-2}|y|^2|K_a(0,y)-K_a(\rho^{i+1}x,y)|\ud y\\
&\quad\quad +\rho^{\sigma(i+1)}\int_{\R^n\setminus B_{\rho^{\ell}\tau}}\rho^{(\sigma+\al)\ell}4|K_a(0,y)-K_a(\rho^{i+1}x,y)|\ud y\\
&\le \rho^{(\sigma+\al)(i+1)}\gamma(c_2+4)\rho^{\al \ell}\tau^{-\sigma}|x|^\al,
\end{split}
\ee
where we used \eqref{eq:est for k-nl} in the second inequality. We choose $\gamma$ such that 
\be\label{eq:choice of eta}
\left(25+(c_2+4)4\sum_{\ell=0}^\infty\rho^{\al \ell}\right)\gamma\le\min(\eta_1/25, \eta_2).
\ee
It follows that \eqref{eq:small norm i+1} holds (here we can choose $\tau=1$). By Lemma \ref{lem:appr}  we have that
\[
\|W-v_{i+1}\|_{L^\infty(\R^n)}=\|W-v_{i+1}\|_{L^\infty(B_4)}\le\va\le\rho^{\sigma+\al}.
\]
Let
\[
w_{i+1}(x)=\rho^{(\sigma+\al)(i+1)}v_{i+1}(\rho^{-(i+1)}x).
\]
Thus, we have shown in the above that all of \eqref{eq:for k-nl}, \eqref{eq:zero outside-nl}, \eqref{eq:est for k-nl}, \eqref{eq:close app-nl} hold for $i+1$. In the following, we shall show that \eqref{eq:holer close app-nl} hold for $i+1$ as well. Let
\[
V=W-v_{i+1}=\rho^{-(i+1)(\sigma+\al)}\left(u-\sum_{\ell=0}^{i+1} w_\ell\right)(\rho^{i+1}x).
\]
Thus, for $x\in B_{4}$
\[
\begin{split}
&I^{(i+1)}V\\
:&=\inf_{a\in\mathcal A}\int_{\R^n} [\delta V(x,y)+\sum_{\ell=0}^{i+1}\rho^{-(i+1)(\sigma+\al)}\delta w_\ell(\rho^{i+1}x,\rho^{i+1}y)]K^{(i+1)}_a(x,y)\ud y-\rho^{-(i+1)\al}f(0)\\
&=\rho^{-(i+1)\al}f(\rho^{i+1}x)-\rho^{-(i+1)\al}f(0).
\end{split}
\]
Moreover, for $x\in B_{4-2\tau}$,
\[
\begin{split}
|I^{(i+1)}0|&=|\inf_{a\in\mathcal A}\int_{\R^n} [\sum_{\ell=0}^{i+1}\rho^{-(i+1)(\sigma+\al)}\delta w_\ell(\rho^{i+1}x,\rho^{i+1}y)]K^{(i+1)}_a(x,y)\ud y-\rho^{-(i+1)\al}f(0)|\\
&=|\inf_{a\in\mathcal A}\int_{\R^n} [\sum_{\ell=0}^{i+1}\rho^{-(i+1)(\sigma+\al)}\delta w_\ell(\rho^{i+1}x,\rho^{i+1}y)]K^{(i+1)}_a(x,y)\ud y\\
&\quad-\inf_{a\in\mathcal A}\int_{\R^n} [\sum_{\ell=0}^{i+1}\rho^{-(i+1)(\sigma+\al)}\delta w_\ell(\rho^{i+1}x,\rho^{i+1}y)]K^{(i+1)}_a(0,y)\ud y|\\
&\le\sup_{a\in\mathcal A}\sum_{l=0}^{i+1}\int_{\R^n} \rho^{-(i+1)(\sigma+\al)}|\delta w_\ell(\rho^{i+1}x,\rho^{i+1}y)||K^{(i+1)}_a(x,y)-K^{(i+1)}_a(0,y)|\ud y\\
&\le \eta_2 \tau^{-\sigma},
\end{split}
\]
where in the last inequality we have used \eqref{eq:acc error} and the choice of $\eta_2$ in \eqref{eq:choice of eta}. Thus, by standard scaling and covering arguments,
\[
[V]_{C^{\al_1}(B_{4-3\tau})}\le 8c_1\tau^{-4}.
\]
Hence, \eqref{eq:holer close app-nl} holds for $i+1$.

This finishes the proof of the claim in Step 2. Therefore, the proof of Theorem \ref{thm:schauder-nl} is completed.
\end{proof}

\begin{rem}\label{rem:how to appr}
In the step of approximation, one cannot use
\[
\begin{split}
&\tilde I^{(i+1)}_0v\\
&:=\inf_{a\in\mathcal A}\int_{\R^n} \left(\delta v(x,y)+\sum_{\ell=0}^{i}\rho^{-(i+1-\ell)(\sigma+\al)}\delta v_\ell(0,\rho^{i+1-\ell}y)\right)K^{(i+1)}_a(0,y)\ud y-\rho^{-(i+1)\al}f(0)
\end{split}
\]
to approximate $I^{(i+1)}W$, since one can check that $\tilde I^{(i+1)}_0$ will not be close to $I^{(i+1)}$. This is the main reason why we need Theorem \ref{thm:E-K-3}.
\end{rem}

\begin{rem}\label{rem:less than 2}
 In the case of $\sigma\ge \sigma_0>0$ and $\sigma+\bar\al\le2-\gamma_0$ for some $\gamma_0>0$, our approximation solutions $\{w_\ell\}$ are of only $C^{\sigma+\bar\al}$ but may not be $C^2$. Thus, instead of \eqref{eq:holder K}, we need the following (stronger) assumption on $K_a$:
\be\label{eq:holder K-2}
\int_{\R^n}|K_a(x,y)-K_a(0,y)|\min(|y|^{\sigma+\bar\al},r^{\sigma+\bar\al})dy \le \Lambda |x|^{\alpha} r^{\bar\al},
\ee
which will be used in \eqref{eq:cal error} and \eqref{eq:acc error}. Then, with the help of Theorem \ref{thm:E-K-3-1}, for $|\sigma+\bar\al-1|\ge\gamma_0 $, $\al\in (0,\bar\al)$ and $|\sigma+\al- 1|\ge \va_0$,  the same proof shows that the Schauder estimate \eqref{eq:schauder estimate} holds under the conditions \eqref{eq:holder K-2} and \eqref{eq:holder f}, where the  constant $C$ there will additionally depend on $\sigma_0$.
\end{rem}

Let $\sigma_0\in (0,2)$. A unified H\"older condition on the kernels $K$ for all $\sigma\in [\sigma_0,2)$, which is slightly stronger than both \eqref{eq:holder K} and \eqref{eq:holder K-2}, would be
\be\label{eq:holder K-3}
\int_{B_{2r}\setminus B_r}|K(x,y)-K(0,y)|dy\le (2-\sigma)\Lambda|x|^\al r^{-\sigma}
\ee
for all $r>0$, $x\in B_5$.

Combining Theorem \ref{thm:schauder-nl} and Remark \ref{rem:less than 2}, we have this corollary.
\begin{cor}\label{cor:unified}
Let $\sigma_0\in (0,2)$. There exists $\bar\al\in (0,1)$ depending only on $n,\lda,\Lda$ and $\sigma_0$ such that the following statement holds: Assume every $K_a(x,y)\in\mathscr L_2 (\lda,\Lda, \sigma)$ satisfies \eqref{eq:holder K-3} with $\sigma\in [\sigma_0,2)$, $\al\in(0,\bar\al)$, $|\sigma+\bar\al-j|\ge\gamma_0>0$ and $|\sigma+\al- j|\ge \va_0>0$ for $j=1,2$. Suppose that $f$ satisfies \eqref{eq:holder f}. If $u$ is a bounded viscosity solution of \eqref{eq:main schauder}, then there exists a polynomial $P(x)$ of degree $[\sigma+\al]$ such that \eqref{eq:schauder estimate} holds for $x\in B_1$, where $C$ in \eqref{eq:schauder estimate} is a positive constant depending only on $\lda,\Lda, n, \sigma_0, \bar\al, \al, \va_0$ and $\gamma_0$.

\end{cor}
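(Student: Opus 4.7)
The plan is to deduce Corollary~\ref{cor:unified} from Theorem~\ref{thm:schauder-nl} and Remark~\ref{rem:less than 2} after first showing that the annular condition \eqref{eq:holder K-3} implies both of the integrated conditions \eqref{eq:holder K} and \eqref{eq:holder K-2} uniformly in $\sigma\in[\sigma_0,2)$. The exponent $\bar\al\in(0,1)$ will be taken as the uniform one furnished by Theorem~\ref{thm:E-K-3-1}, and the separation assumptions $|\sigma+\bar\al-j|\ge\gamma_0$, $|\sigma+\al-j|\ge\va_0$ for $j=1,2$ guarantee that every admissible $\sigma$ falls strictly on one side or the other of the thresholds excluded by the earlier results.

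First I would verify the two implications by a dyadic decomposition $\R^n=\bigsqcup_{k\in\mathbb Z}A_k$ with $A_k=B_{2^{k+1}}\setminus B_{2^k}$. On each shell, \eqref{eq:holder K-3} gives
\[
\int_{A_k}|K(x,y)-K(0,y)|\,\ud y\le (2-\sigma)\Lda|x|^\al (2^k)^{-\sigma}.
\]
Splitting at $2^k\sim r$ and bounding $\min(|y|^2,r^2)$ by $r^2$ on the outer shells and by $4(2^k)^2$ on the inner shells reduces the estimate to the two geometric series $\sum_{2^k\ge r}(2^k)^{-\sigma}\le C\,r^{-\sigma}/(1-2^{-\sigma})$ and $\sum_{2^k<r}(2^k)^{2-\sigma}\le C\,r^{2-\sigma}/(1-2^{-(2-\sigma)})$. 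Both series are uniformly summable on $[\sigma_0,2)$: the lower bound $\sigma\ge\sigma_0$ keeps the first denominator away from zero, while the prefactor $(2-\sigma)$ in \eqref{eq:holder K-3} exactly cancels the blow-up of $(1-2^{-(2-\sigma)})^{-1}$ as $\sigma\uparrow 2$. This produces \eqref{eq:holder K} with a constant depending only on $\sigma_0$. Replacing the exponent $2$ by $\sigma+\bar\al$ throughout, the analogous computation uses $\bar\al>0$ to sum the outer series and $\sigma+\bar\al<2$ to sum the inner one, and yields \eqref{eq:holder K-2} with a constant depending only on $\sigma_0$ and $\bar\al$.

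With both implications in hand, the proof concludes by a case split based on where $\sigma+\bar\al$ sits with respect to $2$. If $\sigma+\bar\al-2\ge\gamma_0$, Theorem~\ref{thm:schauder-nl} applies directly through \eqref{eq:holder K-3}$\Rightarrow$\eqref{eq:holder K} and produces \eqref{eq:schauder estimate}. If instead $\sigma+\bar\al\le 2-\gamma_0$, I would invoke Remark~\ref{rem:less than 2}, which is precisely the version of the Schauder argument that runs with \eqref{eq:holder K-2} in place of \eqref{eq:holder K} and with Theorem~\ref{thm:E-K-3-1} in place of Theorem~\ref{thm:E-K-3}; the separation at $j=1$ is what is consumed there, reflecting the fact that the approximating solutions $w_\ell$ are only of class $C^{\sigma+\bar\al}$ when $\sigma+\bar\al<2$, so the perturbative scheme must be run at the Taylor order dictated by $[\sigma+\bar\al]$. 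The main point requiring care throughout is the uniformity of the constants in $\sigma\in[\sigma_0,2)$, which is guaranteed by the $(2-\sigma)$ weighting built into \eqref{eq:holder K-3}; once this is in place the dependencies of the final constant are inherited directly from Theorem~\ref{thm:E-K-3-1}, Theorem~\ref{thm:schauder-nl}, and Remark~\ref{rem:less than 2}.
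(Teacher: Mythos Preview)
Your approach is correct and matches the paper's—the paper simply states that the corollary follows by combining Theorem~\ref{thm:schauder-nl} and Remark~\ref{rem:less than 2}, while you additionally supply the dyadic verification that \eqref{eq:holder K-3} implies both \eqref{eq:holder K} and \eqref{eq:holder K-2}. One small slip in that verification: after replacing the exponent $2$ by $\sigma+\bar\al$, the outer series $\sum_{2^k\ge r}(2^k)^{-\sigma}$ is unchanged and still sums because $\sigma\ge\sigma_0$, while it is the inner series $\sum_{2^k<r}(2^k)^{\bar\al}$ that requires $\bar\al>0$; the condition $\sigma+\bar\al<2$ plays no role in that estimate.
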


An application of our Schauder estimates is another proof of the following Evans-Krylov type estimates for viscosity solutions of nonlocal fully nonlinear parabolic equations:
\be\label{eq:parabolic}
u_t(x,t)=\inf_{a\in\mathcal A} \left\{\int_{\R^n} \delta u(x,y;t)K_a(y)\,\ud y\right\}\quad\mbox{in }B_2\times (-2,0],
\ee
where $\delta u(x,y;t)=u(x+y,t)+u(x-y,t)-2u(x,t)$, $\mathcal A$ is an index set, and each $K_a\in\mathscr{L}_2(\lda, \Lda, \sigma)$. These estimates for more general nonlocal parabolic equations have been established by H. Chang Lara and G. Davila \cite{LD4}.  The definition of viscosity solutions to nonlocal parabolic equations and their many properties can be found in \cite{LD1,LD2}.  
\begin{thm}\label{thm:parabolic evans-krylov}
 Let $u: \R^n\times[-2,0]\to \R$ be a viscosity solution of \eqref{eq:parabolic}. Suppose that $u$ is Lipschitz continuous in $t$ in $(\R^n\setminus B_2)\times [-2,0]$ and $\|\mathcal M_{0}^{\pm} u(\cdot,-2)\|_{L^\infty(\R^n)}\le C_0$. Then there exists $\bar\beta\in (0,1)$ depending only on $n,\lda,\Lda$ such that for $\sigma+\bar\beta-2\ge\gamma_0>0$ we have
\be\label{eq:ek parabolic}
\begin{split}
 \|u_t\|_{C^{\bar\al}_{x,t}(B_1\times [-1,0])} & + \|\nabla_x^2 u\|_{C^{\bar\al}_{x,t}(B_{1}\times [-1,0])}\\
& \le C(\|u\|_{L^\infty(\R^n\times[-2,0])}+\|u_t\|_{L^\infty((\R^n\setminus B_2)\times [-2,0])}+C_0),
\end{split}
\ee
where $\bar\al=\gamma_0\bar\beta/2$ and $C$ is a positive constant depending only on $n,\lda,\Lda$ and $\gamma_0$.
\end{thm}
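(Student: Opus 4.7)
The plan is to bootstrap the nonlocal parabolic Evans--Krylov estimate from (a) parabolic Hölder estimates for $u_t$ combined with (b) the elliptic Schauder estimate of Theorem \ref{thm:schauder-nl} applied slicewise in $t$, and (c) a classical interpolation to transfer spatial smoothness into temporal smoothness of $\nabla^2_x u$.

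\emph{Step 1: Hölder continuity of $u_t$.} Because \eqref{eq:parabolic} is translation invariant in both $x$ and $t$, the difference quotients $v_h(x,t):=h^{-1}(u(x,t+h)-u(x,t))$ satisfy the extremal parabolic inequalities
\[
\mathcal M^-_0 v_h-(v_h)_t\le 0\le \mathcal M^+_0 v_h-(v_h)_t\quad\mbox{in }B_2\times(-2+h,0],
\]
by the sub/super-additivity of $\inf_a L_a$ with respect to sums. The Lipschitz-in-$t$ hypothesis on $u$ outside $B_2$ bounds $v_h$ on the exterior, while $\|\mathcal M_0^{\pm}u(\cdot,-2)\|_{L^\infty}\le C_0$ bounds $v_h$ at the initial time. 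The parabolic Hölder estimates for nonlocal equations of Chang-Lara and Dávila \cite{LD1,LD2,LD4} then produce a universal $\bar\beta\in(0,1)$, depending only on $n,\lda,\Lda$, such that $u_t\in C^{\bar\beta}_{x,t}(B_{3/2}\times[-3/2,0])$ with norm controlled by the right-hand side of \eqref{eq:ek parabolic}. By further shrinking $\bar\beta$ if necessary, we may assume $\bar\beta\le 1/2$.

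\emph{Step 2: Slicewise elliptic Schauder.} For each fixed $t\in[-3/2,0]$, the function $u(\cdot,t)$ is a viscosity solution of
\[
\inf_{a\in\mathcal A}\int_{\R^n}\delta u(\cdot,y;t)K_a(y)\,\ud y=u_t(\cdot,t)\quad\mbox{in }B_{3/2}.
\]
Because the kernels are translation invariant in $x$, condition \eqref{eq:holder K} is trivially satisfied (with constant zero), and Step 1 shows that $f_t(x):=u_t(x,t)$ satisfies \eqref{eq:holder f} with exponent $\bar\beta$ and constant uniform in $t$. Taking the Evans--Krylov exponent in Theorem \ref{thm:schauder-nl} to be the same universal $\bar\beta$ (permissible by Theorem \ref{thm:E-K-3-1}, which allows $\bar\alpha$ to be any sufficiently small universal constant), the hypothesis $\sigma+\bar\beta-2\ge\gamma_0$ is exactly our standing assumption. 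Choosing $\alpha\in(0,\bar\beta)$ close to $\bar\beta$ so that $\sigma+\alpha-2\ge\gamma_0/2$, Theorem \ref{thm:schauder-nl} and a standard translation/covering argument over base points in $B_1$ give, uniformly in $t\in[-1,0]$,
\[
\|u(\cdot,t)\|_{C^{\sigma+\alpha}_x(B_1)}\le C,
\]
where $C$ depends on $n,\lda,\Lda,\gamma_0$ and the right-hand side of \eqref{eq:ek parabolic}. In particular $\nabla^2_x u(\cdot,t)$ is Hölder in $x$ of exponent $\sigma+\alpha-2\ge\gamma_0/2$, uniformly in $t$.

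\emph{Step 3: Temporal Hölder regularity via interpolation.} For $t_1,t_2\in[-1,0]$, set $w(x):=u(x,t_1)-u(x,t_2)$. Since $u_t\in L^\infty$ by Step 1, $\|w\|_{L^\infty(\R^n)}\le C|t_1-t_2|$, while Step 2 yields $\|w\|_{C^{\sigma+\alpha}_x(B_1)}\le C$. The standard interpolation inequality
\[
\|\nabla^2_x w\|_{L^\infty(B_{1/2})}\le C\|w\|_{L^\infty(\R^n)}^{1-2/(\sigma+\alpha)}\|w\|_{C^{\sigma+\alpha}_x(B_1)}^{2/(\sigma+\alpha)}
\]
then gives $\|\nabla^2_x u(\cdot,t_1)-\nabla^2_x u(\cdot,t_2)\|_{L^\infty(B_{1/2})}\le C|t_1-t_2|^{(\sigma+\alpha-2)/(\sigma+\alpha)}$. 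Because $(\sigma+\alpha-2)/(\sigma+\alpha)\ge \gamma_0/(2(\sigma+\alpha))\ge \gamma_0\bar\beta/2=\bar\alpha$, and since the spatial Hölder exponent of $\nabla^2_x u$ is also $\ge\bar\alpha$, combining the $x$- and $t$-Hölder estimates by the triangle inequality yields the claimed joint estimate on $\nabla^2_x u$. The corresponding estimate for $u_t$ is already provided by Step 1 (since $\bar\beta\ge\bar\alpha$).

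\emph{Main obstacle.} The delicate point is matching exponents so that the standing hypothesis $\sigma+\bar\alpha-2\ge\gamma_0$ of Theorem \ref{thm:schauder-nl} is consistent with the parabolic Hölder exponent produced in Step 1. Theorem \ref{thm:E-K-3-1} is used precisely to absorb this by identifying the Evans--Krylov exponent with the parabolic Krylov--Safonov exponent (both can be taken to be the same small universal $\bar\beta$). A secondary technical point is the slicewise application of the pointwise Schauder estimate to obtain a uniform $C^{\sigma+\alpha}_x$ bound on $B_1$; this is handled by re-centering Theorem \ref{thm:schauder-nl} at each point of $B_1$ using a standard covering argument.
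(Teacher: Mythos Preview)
Your three-step strategy---parabolic H\"older regularity for $u_t$ via \cite{LD1,LD2}, slicewise application of Theorem~\ref{thm:schauder-nl}, then interpolation to upgrade the spatial $C^{\sigma+\alpha}_x$ bound to temporal H\"older continuity of $\nabla^2_x u$---is exactly the paper's approach; the paper cites Lemma~3.1 on p.~78 of \cite{LSU} for the interpolation (applied to $\nabla_x u$, yielding the exponent $\bar\beta(\sigma+\bar\beta-2)/(\sigma+\bar\beta-1)$), whereas you interpolate directly on $u$, but both are equivalent for the purpose at hand. One minor slip: your chain $(\sigma+\alpha-2)/(\sigma+\alpha)\ge \gamma_0/(2(\sigma+\alpha))\ge \gamma_0\bar\beta/2$ needs $\sigma+\alpha\le 1/\bar\beta$, and since $\sigma+\alpha<2+\bar\beta$ this requires $\bar\beta(2+\bar\beta)\le 1$, which fails for $\bar\beta=1/2$; simply shrink $\bar\beta$ to, say, $\bar\beta\le 1/3$ and the arithmetic goes through.
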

\begin{proof}
 It follows from Theorem 6.2 in \cite{LD1} and Theorem 4.1 in \cite{LD2} that there exists some $\bar\beta\in (0,1)$ depending only on $n,\lda,\Lda$ such that
\[
 \nabla_{x,t} u\in C^{\bar\beta}_{x,t}(B_1\times [-1,0]).
\]
In particular, the right hand side of \eqref{eq:parabolic} is H\"older in $x$. By the Schauder estimates in Theorem \ref{thm:schauder-nl} (and adjusting $\bar\beta$ if necessary), when $\sigma+\bar\beta-2\ge\gamma_0>0$, we have for all $t\in [-1,0]$
\[
 \nabla^2_x u(\cdot, t)\in C^{\sigma+\bar\beta-2}_x(B_1).
\]
By Lemma 3.1 on page 78 in \cite{LSU}, we have for all $x\in B_1$,
\[
  \nabla^2_x u(x, \cdot)\in C^{\bar\beta(\sigma+\bar\beta-2)/(\sigma+\bar\beta-1)}_t([-1,0])\subset C^{\bar\alpha}_t( [-1,0]).
\]
Thus, $\nabla^2_x u\in C^{\bar\alpha}_{x,t}(B_1\times [-1,0])$, and the estimate \eqref{eq:ek parabolic} follows from the estimates in Theorem 6.2 in \cite{LD1}, Theorem 4.1 in \cite{LD2} and the Schauder estimates we proved. This finishes the proof.
\end{proof}

Note that Example 2.4.1 in \cite{LD2} shows that the assumption of the Lipschitz continuity on $u$ in $(\R^n\setminus B_2)\times [-2,0]$ is necessary to obtain H\"older continuity of $u_t$ in $B_1\times [-1,0]$. The estimate \eqref{eq:ek parabolic} is not written in the scaling invariant form for the purpose of convenience in its proof. The constant $C$ in \eqref{eq:ek parabolic} does not depend on $\sigma$, and thus, does not blow up as $\sigma\to 2$.

One also can replace the condition on the initial data $u(\cdot,-2)$ in Theorem \ref{thm:parabolic evans-krylov} by the following global Lipschitz type assumption:
\be\label{eq:assuption global lip}
[u]_{C^{0,1}((t_1,t_2]; L^1(\omega_\sigma))}:=\sup_{(t-\tau,t]\subset(t_1,t_2]}\frac{\|u(\cdot, t)-u(\cdot,t-\tau)\|_{L^1(\omega_\sigma)}}{\tau}<\infty,
\ee
where
$
\|v\|_{L^1(\omega_\sigma)}=\int_{\R^n}|v(y)|\min(1,|y|^{-n-\sigma})\ud y.
$
\begin{thm}\label{thm:parabolic evans-krylov2}
 Let $u: \R^n\times[-2,0]\to \R$ be a viscosity solution of \eqref{eq:parabolic} and satisfy \eqref{eq:assuption global lip}. Then there exists $\bar\beta\in (0,1)$ depending only on $n,\lda,\Lda$ such that for $\sigma+\bar\beta-2\ge\gamma_0>0$ we have
\[
\begin{split}
 \|u_t\|_{C^{\bar\al}_{x,t}(B_1\times [-1,0])} & + \|\nabla_x^2 u\|_{C^{\bar\al}_{x,t}(B_{1}\times [-1,0])}\\
& \le C(\|u\|_{L^\infty(\R^n\times[-2,0])}+[u]_{C^{0,1}((t_1,t_2]; L^1(\omega_\sigma))}),
\end{split}
\]
where $\bar\al=\gamma_0\bar\beta/2$ and $C$ is a positive constant depending only on $n,\lda,\Lda$ and $\gamma_0$.
\end{thm}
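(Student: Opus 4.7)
The plan is to reduce Theorem \ref{thm:parabolic evans-krylov2} to the proof of Theorem \ref{thm:parabolic evans-krylov} by establishing interior H\"older regularity of $u_t$ directly from the weighted $L^1$ time-Lipschitz assumption \eqref{eq:assuption global lip}. The pointwise exterior Lipschitz-in-$t$ hypothesis of Theorem \ref{thm:parabolic evans-krylov} served only to make $u_t$ a meaningful object with interior regularity via \cite{LD1, LD2}, so it suffices to obtain the same interior regularity of $u_t$ through a different route.

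First, for each small $\tau>0$, I would consider the time-difference quotient
\[
w_\tau(x,t) := \tau^{-1}[u(x,t)-u(x,t-\tau)]
\]
on $\R^n \times [-2+\tau, 0]$. Since the operator on the right-hand side of \eqref{eq:parabolic} is a pointwise infimum of linear operators in $\mathscr{L}_2(\lda,\Lda,\sigma)$, hence concave in the unknown, a routine verification shows that $w_\tau$ satisfies the extremal inequalities
\[
\mathcal M^-_2 w_\tau \le \partial_t w_\tau \le \mathcal M^+_2 w_\tau
\]
in the viscosity sense in $B_2 \times (-2+\tau, 0]$. By the hypothesis \eqref{eq:assuption global lip},
\[
\sup_{t}\|w_\tau(\cdot,t)\|_{L^1(\omega_\sigma)} \le [u]_{C^{0,1}((t_1,t_2];L^1(\omega_\sigma))}
\]
uniformly in $\tau$. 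Applying the interior H\"older estimate for viscosity sub- and supersolutions of nonlocal parabolic extremal operators from \cite{LD1,LD2} in its weighted-$L^1$ form---where the exterior tail contribution of $w_\tau$ is controlled by its $L^1(\omega_\sigma)$ norm rather than by an $L^\infty$ bound---we obtain
\[
\|w_\tau\|_{C^{\bar\beta}_{x,t}(B_{3/2}\times [-3/2, 0])} \le C\bigl(\|u\|_{L^\infty(\R^n\times [-2,0])} + [u]_{C^{0,1}((t_1,t_2]; L^1(\omega_\sigma))}\bigr)
\]
uniformly in $\tau$, with $\bar\beta \in (0,1)$ and $C$ depending only on $n,\lda,\Lda$. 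Passing to the limit $\tau\to 0$ via Arzel\`a--Ascoli and stability of viscosity solutions produces $u_t \in C^{\bar\beta}_{x,t}(B_{3/2}\times [-3/2, 0])$ with the analogous bound.

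With the interior H\"older regularity of $u_t$ in hand, the argument of Theorem \ref{thm:parabolic evans-krylov} applies essentially verbatim: the right-hand side of \eqref{eq:parabolic} is H\"older in $x$ at each time slice, so the Schauder estimates of Theorem \ref{thm:schauder-nl} give $\nabla^2_x u \in C^{\sigma+\bar\beta-2}_x$ under the assumption $\sigma + \bar\beta - 2 \ge \gamma_0$, and then the parabolic H\"older interpolation Lemma 3.1 on page~78 of \cite{LSU} promotes this to joint H\"older regularity $\nabla^2_x u \in C^{\bar\al}_{x,t}$ with $\bar\al = \gamma_0\bar\beta/2$. The main obstacle is the weighted-$L^1$ version of the H\"older estimate used in Step~2: I must check that the interior arguments of \cite{LD1,LD2} remain valid when the exterior data of the difference quotient is only bounded in $L^1(\omega_\sigma)$. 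This should be a routine modification of the existing proofs, since the exterior tail always enters through an integral against a weight comparable to $\omega_\sigma$; nevertheless, it is the only technical point beyond a direct transcription of the proof of Theorem \ref{thm:parabolic evans-krylov}.
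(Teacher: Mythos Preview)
Your proposal is correct and follows essentially the same route as the paper. The paper's proof is a single sentence: it is identical to the proof of Theorem~\ref{thm:parabolic evans-krylov} except that Corollary~7.2 and Corollary~7.4 of \cite{LD3} are invoked in place of Theorem~6.2 in \cite{LD1} and Theorem~4.1 in \cite{LD2}. In other words, the ``weighted-$L^1$ version of the H\"older estimate'' that you correctly isolate as the only nontrivial step is already established in \cite{LD3}, so your sketch of the difference-quotient argument and the tail control via $L^1(\omega_\sigma)$ is precisely the content of those cited corollaries rather than something you need to redo by hand.
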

\begin{proof}
It is the same as the proof of Theorem \ref{thm:parabolic evans-krylov}, except that we use Corollary 7.2 and Corollary 7.4 in \cite{LD3} instead of  Theorem 6.2 in \cite{LD1} and Theorem 4.1 in \cite{LD2},
\end{proof}

\appendix
\section{Appendix: approximation lemmas}

Our proof of Schauder estimates uses perturbative arguments, and we need the following two approximation lemmas, which are variants of Lemma 7 in [6]. We will do a few modifications for our own purposes, and we decide to include them in this appendix for completeness and convenience.

To start with, we recall some definitions and notations about nonlocal elliptic operators, which can be found in \cite{CS09,CS10}. Let $\sigma_0\in (0,2)$ be fixed, and $\omega(y)=(1+|y|^{n+\sigma_0})^{-1}$. We say $u\in L^1(\R^n,\omega)$ if $\int_{\R^n}|u(y)|\omega(y)\ud y<\infty.$ Let $\Omega$ be an open subset of $\R^n$. Let us recall Definition 21 in \cite{CS10} for nonlocal operators. A nonlocal operator $I$ in $\Omega$ is a rule that assigns a function $u$ to a value $I(u,x)$ at every point $x\in\Omega$ satisfying the following assumptions:
\begin{itemize}
 \item $I(u,x)$ is well-defined as long as $u\in C^2(x)$ and $u\in L^1(\R^n,\omega)$;
 \item If $u\in C^2(\Omega)\cap L^1(\R^n,\omega)$, then $I(u,x)$ is continuous in $\Omega$ as a function of $x$.
\end{itemize}
Here $u\in C^2(x)$ we mean that there is a quadratic polynomial $p$ such that $u(y)=p(y)+o(|y-x|^2)$ for $y$ close to $x$. An operator is translation invariant  if $\tau_z Iu=I(\tau_z u)$ where $\tau_z$ is the translation operator $\tau_zu(x)=u(x-z)$.

Given such a nonlocal operator $I$, one can defined a norm $\|I\|$ as in Definition 22 in \cite{CS10}.
We also define a (weaker) norm $\|I\|_{*}$ for our own purpose:
\be\label{eq:norm star}
 \begin{split}
  \|I\|_*:=& \sup\{|I(u,x)|/(1+M):x\in\Omega, u\in C^2(x), \|u\|_{L^\infty(\R^n)}\le M,\\
          &|u(y)-u(x)-(y-x)\cdot \nabla u(x)|\le \frac{M}{2}|x-y|^2 \mbox{ for every }y\in B_1(x)\}.
 \end{split}
\ee
We say that a nonlocal operator $I$ is uniformly elliptic with respect to $\mathscr{L}_0(\lda,\Lda,\sigma)$, which will be written as $\mathscr{L}_0(\sigma)$ for short, if
\[
\mathcal M^-_{\mathscr L_0(\sigma)} v(x)\le I(u+v,x)-I(u,x)\le \mathcal M^+_{\mathscr L_0(\sigma)} v(x),
\]
where
\[
\begin{split}
 \mathcal M^-_{\mathscr L_0(\sigma)} v(x)=\inf_{L\in \mathscr L_0(\sigma)}Lv(x)=(2-\sigma)\int_{\R^n}\frac{\lda\delta v(x,y)^+-\Lda\delta v(x,y)^-}{|y|^{n+\sigma}}\ud y\\
\mathcal M^+_{\mathscr L_0(\sigma)} v(x)=\sup_{L\in \mathscr L_0(\sigma)}Lv(x)=(2-\sigma)\int_{\R^n}\frac{\Lda\delta v(x,y)^+-\lda\delta v(x,y)^-}{|y|^{n+\sigma}}\ud y.\\
\end{split}
\]
It is also convenient to define the limit operators when $\sigma\to 2$ as
\[
 \begin{split}
  \mathcal  M^-_{\mathscr L_0(2)} v(x)=\lim_{\sigma\to 2}\mathcal M^-_{\mathscr L_0(\sigma)} v(x)\\
\mathcal M^+_{\mathscr L_0(2)} v(x)=\lim_{\sigma\to 2}\mathcal M^+_{\mathscr L_0(\sigma)} v(x).
 \end{split}
\]
It has been explained in \cite{CS10} that $\mathcal M^+_{\mathscr L_0(2)}$ is a second order uniformly elliptic operator, whose ellipticity constants $\tilde \lda$ and $\tilde \Lda$ depend only $\lda,\Lda$ and the dimension $n$. Moreover, $\mathcal M^+_{\mathscr L_0(2)} v\le \mathcal M^+(\nabla^2 v)$, where $\mathcal M^+(\nabla^2 v)$ is the second order Pucci operator with ellipticity constants $\tilde \lda$ and $\tilde \Lda$. Similarly, we also have corresponding relations for $ \mathcal M^-_{\mathscr L_0(2)}$.

Our approximation lemmas will be proved by compactness arguments, where we need the concepts of the weak convergence of nonlocal operators in Definition 41 in \cite{CS10}. We say that a sequence of nonlocal operators $I_k\rightharpoonup I$ weakly in $\Omega$ if, for every $x_0\in\Omega$ and for every function $v$ of the form
\[
 v(x)=\begin{cases}
       p(x)\quad\mbox{if }|x-x_0|\le r;\\
       u(x)\quad\mbox{if }|x-x_0|> r,
      \end{cases}
\]
where $p$ is a polynomial of degree two and $u\in L^1(\R^n,\omega)$, we have $I_k(v,x)\to I(v,x)$ uniformly in $B_{r/2}(x_0)$.
\begin{lem}\label{lem:appr0}
For some $\sigma\ge \sigma_0>0$ we consider nonlocal operators $I_0$, $I_1$ and $I_2$ uniformly elliptic with respect to $\mathscr{L}_0(\sigma)$. Assume also that $I_0$ is translation invariant and $I_0 (0)=1$.

 Given $M>0$, a modulus of continuity $\omega_1$ and $\va>0$, there exists $\eta_1$ (small, independent of $\sigma$) and $R$ (large, independent of $\sigma$) so that if $u,v,I_0,I_1$ and $I_2$ satisfy
\[
\begin{split}
I_0 (v,x)=0, \quad 
I_1 (u,x)\ge -\eta_1, \quad 
I_2 (u,x)\le \eta_1 \quad \mbox{in }B_1
\end{split}
\]
in viscosity sense, and
\[
\begin{split}
\|I_1-I_0\|_*\le\eta_1,\quad \|I_2-I_0\|_*\le\eta_1\quad \mbox{in }B_1,\\
\end{split}
\]
and
\[
\begin{split}
u&=v  \quad \mbox{in }\R^n\setminus B_1,\\
|u(x)|&\le M \quad \mbox{in }\R^n,\\
|u(x)-u(y)|&\le \omega_1(|x-y|) \quad \mbox{for every }x\in B_R\setminus B_1\mbox{ and }y\in \R^n\setminus B_{1},\\
\end{split}
\]
then $|u-v|\le\va$ in $B_1$.
\end{lem}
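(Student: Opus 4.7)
The plan is to argue by contradiction in the standard compactness style used for Lemma 7 of \cite{CS10}, adapted to the asymmetric setting (Pucci-type inequalities instead of a two-sided equation) and to the weaker norm $\|\cdot\|_*$. Assume the lemma fails for some fixed $M$, $\omega_1$ and $\varepsilon>0$. Then there exist sequences $\sigma_k\in[\sigma_0,2)$, nonlocal operators $I_0^k,I_1^k,I_2^k$ uniformly elliptic with respect to $\mathscr L_0(\sigma_k)$, with $I_0^k$ translation invariant and $I_0^k(0)=0$, numbers $\eta_k\to 0$ and $R_k\to\infty$, and functions $u_k,v_k$ satisfying all the listed hypotheses but $\|u_k-v_k\|_{L^\infty(B_1)}>\varepsilon$. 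The goal is to extract subsequential limits and reach a contradiction via a comparison principle.

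For the compactness step, uniform ellipticity with respect to $\mathscr L_0(\sigma_k)$ together with the inequalities $\mathcal M^-_{\mathscr L_0(\sigma_k)}u_k\le C$, $\mathcal M^+_{\mathscr L_0(\sigma_k)}u_k\ge -C$ (obtained from $I_j^k u_k$ being bounded, $\|I_j^k-I_0^k\|_*\le\eta_k$, and $I_0^k(0)=0$) and $\|u_k\|_{L^\infty(\R^n)}\le M$ yield uniform interior $C^{\beta}$ estimates on $u_k$ in $B_{3/4}$ via the Caffarelli--Silvestre Hölder theorem \cite{CS09}, with exponent $\beta$ and constant independent of $\sigma_k$. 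The same applies to $v_k$ using its equation. Combined with the uniform modulus of continuity $\omega_1$ on the annulus $B_{R_k}\setminus B_1$ and the uniform bound $|u_k|,|v_k|\le M$ at infinity, Arzelà--Ascoli together with a diagonal argument produces limits $u_k\to u_\infty$, $v_k\to v_\infty$ locally uniformly on $\R^n$ and in the weighted space $L^1(\R^n,\omega)$, with $u_\infty=v_\infty$ on $\R^n\setminus B_1$. One may also assume $\sigma_k\to\sigma_\infty\in[\sigma_0,2]$.

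For the limit operator, uniform ellipticity plus a selection argument (as in Lemma 42 of \cite{CS10}) extracts a subsequence along which $I_0^k$ converges weakly to a translation invariant operator $I_\infty$, uniformly elliptic with respect to $\mathscr L_0(\sigma_\infty)$ if $\sigma_\infty<2$, and to a second order Pucci-type operator with ellipticity constants $\tilde\lambda,\tilde\Lambda$ if $\sigma_\infty=2$. The key stability check is that $\|I_j^k-I_0^k\|_*\to 0$ combined with uniform ellipticity is enough to pass the viscosity inequalities to the limit: for any admissible test polynomial $p$ touching $u_\infty$ at $x$ from above, the standard computation shows $I_j^k(p_k,x_k)\to I_\infty(p,x)$ along perturbed contact points $(x_k,p_k)$, because the functions that arise in the test are $C^2$ at the point with controlled second order data, placing them exactly in the class used to define $\|\cdot\|_*$. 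Hence both $u_\infty$ and $v_\infty$ are viscosity solutions of $I_\infty w=0$ in $B_1$ with identical exterior data. The comparison principle for $I_\infty$ (Theorem 5.9 of \cite{CS09} for $\sigma_\infty<2$, or the classical second order comparison for $\sigma_\infty=2$) gives $u_\infty\equiv v_\infty$ on $B_1$, contradicting $\|u_k-v_k\|_{L^\infty(B_1)}>\varepsilon$.

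The main obstacle, and the only place where the weaker norm $\|\cdot\|_*$ really matters, is the stability step: one must verify that $\|I_j^k-I_0^k\|_*\to 0$ is still strong enough to commute with viscosity limits. This reduces to checking that when a $C^{1,1}$ paraboloid touches $u_\infty$ at an interior point and is combined with the exterior portion of $u_\infty$ (which is only uniformly continuous), the resulting test function still fits the admissibility class in the definition of $\|\cdot\|_*$; here one uses the interior $C^\beta$ estimates to produce a suitable truncation whose $L^\infty$ norm and quadratic approximation on a small ball are controlled by a universal constant $M$, so that the error term is bounded by $\eta_k(1+M)\to 0$. The remaining boundary case $\sigma_\infty=2$ and the convergence of nonlocal Pucci operators to their second order counterparts is handled exactly as in Section 5 of \cite{CS10}.
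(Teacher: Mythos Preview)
Your overall strategy---contradiction plus compactness, weak convergence of the operators via Theorem~42 of \cite{CS10}, and stability of viscosity solutions in the spirit of Lemma~5 of \cite{CS10}---is exactly the paper's approach, and your discussion of why the weaker norm $\|\cdot\|_*$ suffices (the test functions are uniformly bounded by $M$ and $C^{1,1}$ at the contact point) is correct and matches what the paper remarks more briefly.

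There is, however, one genuine gap. You claim that interior H\"older estimates in $B_{3/4}$ together with the exterior modulus $\omega_1$ on $B_{R_k}\setminus B_1$ yield convergence of $u_k$ and $v_k$ \emph{locally uniformly on $\R^n$}. This is not justified: interior estimates give equicontinuity only on compact subsets of the open ball $B_1$, and $\omega_1$ controls things only outside $\overline{B_1}$, so nothing in your argument provides equicontinuity on compact sets that meet $\partial B_1$ from the inside. Without uniform convergence on $\overline{B_1}$, you cannot pass from $\sup_{B_1}|u_k-v_k|>\varepsilon$ to $\sup_{B_1}|u_\infty-v_\infty|\ge\varepsilon$; the points where the supremum is nearly attained could drift to $\partial B_1$, where $u_k-v_k$ vanishes, and the contradiction evaporates. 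The paper closes this gap by invoking the boundary regularity result, Theorem~32 of \cite{CS10}, which uses precisely the assumed exterior modulus $\omega_1$ to produce a uniform modulus of continuity for $u_k$ and $v_k$ on the closed ball $\overline{B_1}$. Once you insert that step, your argument is complete and coincides with the paper's.
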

\begin{proof}
 It follows from the proof of Lemma 7\footnote{The statements of Lemma 7 and Lemma 8 in \cite{CS10} should be read under the condition that $I_0$ is translation invariant (see \cite{LS}), which does not affect their applications in \cite{CS10}.} in \cite{CS10} with modifications. We argue by contradiction. Suppose the above lemma was false. Then there would be sequences $\sigma_k$, $I_0^{(k)}$, $I_1^{(k)}$, $I_2^{(k)}$, $\eta_k$, $u_k$, $v_k$ such that $\sigma_k\to \sigma\in [\sigma_0,2]$, $\eta_k\to 0$ and all the assumptions of the lemma are valid, but $\sup _{B_1}|u_k-v_k|\ge\va$.

Since $I_0^{(k)}$ is a sequence of uniformly elliptic translation invariant operators with respect to $\mathscr L(\sigma_k)$, by Theorem 42 in \cite{CS10} that we can take a subsequence, which is still denoted as $I_0^{(k)}$, that converges weakly to some nonlocal operator $I_0$, and $I_0$ is also translation invariant, and  uniformly elliptic with respect to the class $\mathscr{L}_0(\sigma)$.

It follows from the boundary regularity Theorem 32 in \cite{CS10} that $u_k$ and $v_k$ have a modulus of continuity, uniform in $k$, in the closed unit ball $\overline B_1.$ Thus, $u_k$ and $v_k$ have a uniform (in $k$) modulus of continuity on $B_{R_k}$ with $R_k\to\infty$. We can subsequences of $\{u_k\}$ and $\{v_k\}$, which will be still denoted as $\{u_k\}$ and $\{v_k\}$, which converges locally uniformly in $\R^n$ to $u$ and $v$, respectively. Moreover, $u=v$ in $\R^n\setminus B_1$, and $\sup_{B_1}|u-v|\ge\va$.

In the following, we are going to show that
\be\label{eq:limit same}
 I_0(u,x)=0=I_0(v,x) \quad\mbox{in } B_1,
\ee
from which we can conclude that $u\equiv v$ in $B_1$, since $I_0$ is translation invariant. But we know that $\sup_{B_1}|u-v|\ge\va$. This reaches a contradiction.

The second equality of \eqref{eq:limit same} follows from Lemma 5 in \cite{CS10}. The first equality actually follows almost identically from the proof of Lemma 5 in \cite{CS10}: we only need to notice that the sequence $\{u_k\}$ is uniformly bounded by $M$, and thus the conditions that $I_1^{(k)}(u_k,x)\ge-\eta_k$, $I_2^{(k)}(u_k,x)\le\eta_k$, $\|I_1^{(k)}-I_0^{(k)}\|_*\to 0$ and $\|I_2^{(k)}-I_0^{(k)}\|_*\to 0$ are sufficient to show $I_0(u,x)=0$ in $B_1$ as in the proof of Lemma 5 in \cite{CS10}.
\end{proof}

\begin{lem}\label{lem:appr}
For some $\sigma\ge \sigma_0>0$ we consider nonlocal operators $I_0$, $I_1$ and $I_2$ uniformly elliptic with respect to $\mathscr{L}_0(\sigma)$. Assume also that
\[
I_0 v(x):=\inf_{a\in\mathcal A}\left\{ \int_{\R^n} \delta v(x,y)K_a(y)\ud y+h_a(x)\right\} \quad\mbox{in }B_4,
\]
where each $K_a\in\mathscr L_2(\sigma)$ and for some constant $\beta\in (0,1)$,
\[
[h_a]_{C^\beta(B_4)} \le M_0\quad\mbox{and}\quad\inf_{a\in\mathcal A} h_a(x)=0~\forall~x\in B_4.
\]
Given $M_0,M_1, M_2, M_3>0$, $R_0>5$, $\beta, \nu\in (0,1)$, and $\va>0$, there exists $\eta_2$ (small, independent of $\sigma$) so that if $u,v,I_0,I_1$ and $I_2$ satisfy
\[
\begin{split}
I_0 (v,x)=0, \quad 
I_1 (u,x)\ge -\eta_2, \quad 
I_2 (u,x)\le \eta_2 \quad \mbox{in }B_4,\\
\end{split}
\]
in viscosity sense, and
\[
\|I_1-I_0\|_*\le\eta_2,\quad
\|I_2-I_0\|_*\le\eta_2\quad \mbox{in }B_4,\\
\]
and
\[
\begin{split}
u&=v  \quad \mbox{in }\R^n\setminus B_4,\\
u&\equiv 0 \quad \mbox{in }\R^n\setminus B_{R_0},\\
|u|&\le M_1 \quad \mbox{in }\R^n,\\
[u]_{C^\nu(B_{R_0-\tau})}&\le M_2\tau^{-4}\quad\forall~\tau\in (0,1),\\
\|v\|_{C^{\sigma+\beta}(B_{4-\tau})}&\le M_3\tau^{-4}\quad\forall~\tau\in (0,1),\\
\end{split}
\]
then $|u-v|\le\va$ in $B_4$.
\end{lem}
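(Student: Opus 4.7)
The plan is to argue by contradiction and compactness, paralleling the proof of Lemma \ref{lem:appr0}. Suppose the statement fails. Then there exist sequences $\sigma_k \in [\sigma_0,2)$, operators $I_0^{(k)}, I_1^{(k)}, I_2^{(k)}$ (with $I_0^{(k)}$ of the prescribed infimum form and with coefficients $h_a^{(k)}$ uniformly bounded in $C^\beta(B_4)$ and satisfying $\inf_a h_a^{(k)}=0$), functions $u_k,v_k$, and $\eta_k\downarrow 0$ satisfying all the hypotheses but with $\sup_{B_4}|u_k-v_k|\ge \va$.

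First I would extract convergent subsequences. After passing to a subsequence, $\sigma_k\to \sigma_\infty\in[\sigma_0,2]$. By the compactness of uniformly elliptic nonlocal operators (Theorem 42 in [CS10]), $I_0^{(k)}\rightharpoonup I_0$ weakly for some operator $I_0$ uniformly elliptic with respect to $\mathscr L_0(\sigma_\infty)$. Because each $h_a^{(k)}$ is uniformly $C^\beta$ bounded with $\inf_a h_a^{(k)}=0$, the limiting operator $I_0$ retains an infimum-of-linear-plus-continuous structure and thus enjoys the standard comparison principle for viscosity solutions with prescribed exterior data. Next, the bound $|u_k|\le M_1$, the uniform interior bound $[u_k]_{C^\nu(B_{R_0-\tau})}\le M_2\tau^{-4}$, and the support condition $u_k\equiv 0$ outside $B_{R_0}$ yield via Arzela-Ascoli a locally uniformly convergent subsequence $u_k\to u$. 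Similarly, the uniform bound $\|v_k\|_{C^{\sigma+\beta}(B_{4-\tau})}\le M_3\tau^{-4}$ together with $v_k=u_k$ on $\R^n\setminus B_4$ produces a locally uniformly convergent subsequence $v_k\to v$, with $v=u$ in $\R^n\setminus B_4$ and $\sup_{B_4}|u-v|\ge\va$.

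The core of the argument is to pass to the limit in the equations. Using $\|I_j^{(k)}-I_0^{(k)}\|_*\le\eta_k$ and the uniform $L^\infty$ bound on $u_k$, any quadratic test function touching $u_k$ at a point of $B_4$ fits within the class used in the definition of $\|\cdot\|_*$, so the viscosity inequalities $I_1^{(k)}(u_k,x)\ge-\eta_k$ and $I_2^{(k)}(u_k,x)\le\eta_k$ translate into viscosity inequalities for $u_k$ under $I_0^{(k)}$ with error $O(\eta_k)\to 0$. The stability result for viscosity solutions under weak operator convergence (Lemma 5 in [CS10], applied exactly as in the proof of Lemma \ref{lem:appr0}) then yields that both $u$ and $v$ are viscosity solutions of $I_0 w=0$ in $B_4$. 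Since $u=v$ on $\R^n\setminus B_4$, the comparison principle for $I_0$ forces $u\equiv v$ in $B_4$, contradicting $\sup_{B_4}|u-v|\ge\va$.

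The main technical obstacle is the passage to the limit in Step 3: confirming that weak convergence $I_0^{(k)}\rightharpoonup I_0$ together with the $\|\cdot\|_*$-closeness is enough to transfer the viscosity inequalities for $u_k$ under $I_j^{(k)}$ to the limit equation $I_0u=0$. The norm $\|\cdot\|_*$ in \eqref{eq:norm star} is designed precisely for this, as it incorporates the $L^\infty$ bound that is uniform along our sequence, so the argument from Lemma 5 in [CS10] extends with only notational changes. Some care is required near $\partial B_4$, where $u$ has only Hölder regularity, but this is handled by exploiting the $C^{\sigma+\beta}$ regularity of $v$ (and hence of $u=v$ on the complement) inherited in the limit, and by testing at interior contact points only.
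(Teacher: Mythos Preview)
Your overall compactness-and-contradiction strategy matches the paper's, but there are two genuine gaps, and both correspond to the new content that distinguishes this lemma from Lemma~\ref{lem:appr0}.

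First, you invoke Theorem~42 of \cite{CS10} to get weak convergence of $I_0^{(k)}$. That theorem, however, is stated (and proved) for \emph{translation invariant} operators; here $I_0^{(k)}$ is not translation invariant because of the $x$-dependent terms $h_a^{(k)}(x)$. This is precisely the point where the uniform bound $[h_a]_{C^\beta(B_4)}\le M_0$ must be used: one has to redo the equicontinuity argument underlying Theorem~42 (i.e.\ Lemma~6 of \cite{CS10}) and check that for a fixed test function $\varphi$ of the required form, $I_0^{(k)}\varphi(x)-I_0^{(k)}\varphi(y)$ picks up an extra term bounded by $M_0|x-y|^\beta$. The paper isolates this as a separate claim; you have skipped it.

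Second, your final step appeals to a ``standard comparison principle'' for $I_0$, justified by the assertion that $I_0$ ``retains an infimum-of-linear-plus-continuous structure.'' This is not established: the weak limit $I_0$ is obtained abstractly and there is no reason it must be of the same explicit form (the index set $\mathcal A$ and the data $K_a^{(k)},h_a^{(k)}$ vary with $k$). Comparison between two merely viscosity solutions of a general uniformly elliptic nonlocal operator is delicate. The paper avoids this entirely: the hypothesis $\|v_k\|_{C^{\sigma+\beta}(B_{4-\tau})}\le M_3\tau^{-4}$ is there not only for compactness but so that the limit $v$ lies in $C^{\sigma+\beta-\mu}_{\mathrm{loc}}(B_4)$ and is therefore a \emph{classical} solution of $I_0 v=0$. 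Comparison of a viscosity solution $u$ against a classical solution $v$ with the same exterior data is then immediate. You used the $C^{\sigma+\beta}$ bound only for Arzel\`a--Ascoli and missed its real role.
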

\begin{proof}
This lemma can be proved similarly to Lemma \ref{lem:appr0}. Suppose the above lemma was false. Then there would be sequences $\sigma_k$, $I_0^{(k)}$, $I_1^{(k)}$, $I_2^{(k)}$, $\eta_k$, $u_k$, $v_k$ such that $\sigma_k\to\sigma\in [\sigma_0,2]$, $\eta_k\to 0$ and all the assumptions of the lemma are valid, but $\sup _{B_1}|u_k-v_k|\ge\va$.

By our assumptions, it is clear that, up to a subsequence, $u_k$ converges locally uniformly in $B_{R_0}$. Since $u_k\equiv 0$ in $\R^n\setminus B_{R_0}$, it converges almost everywhere to some function $u$ in $\R^n$. Since $v_k$ is bounded and has a modulus continuity on $B_5\setminus B_4$, then by the boundary regularity Theorem 32 in \cite{CS10}, there is another modulus continuity that extends to the closed unit ball $\overline B_4$, and thus, $v_k$ converges uniformly in $\overline B_4$, as well as in $C^{\sigma+\beta-\mu}_{loc}(B_4)$ for any arbitrarily small $\mu>0$. Therefore, $v_k$ converges to some function $v\in C^{\sigma+\beta-\mu}_{loc}(B_4)$ almost everywhere in $\R^n$. Moreover, $u=v$ in $\R^n\setminus B_4$, and $\sup_{B_4}|u-v|\ge\va$.

We are going to show that there exists a subsequence of $\{I_0^{(k)}\}$, which is still denoted as $I_0^{(k)}$, that converges weakly in $B_4$ to some nonlocal operator $I_0$, and $I_0$ is uniformly elliptic with respect to the class $\mathscr{L}_0(\sigma)$. Then it follows from the proof of \eqref{eq:limit same} that $u$ and $v$ solve the same equation $I_0(u,x)=I_0(v,x)=0$ in $B_4$ in viscosity sense. Since $v\in C^{\sigma+\beta-\mu}_{loc}(B_4)$ is a classical solution and $u=v$ in $\R^n\setminus B_4$, we have $u=v$ in $B_4$, which is a contradiction.

The proof of that there exists a subsequence of $\{I_0^{(k)}\}$ weakly converges in $B_4$ will basically follow from the proofs of Lemma 6 and Theorem 42 in \cite{CS10}.

\emph{Claim 1:} Let $\varphi$ be a function
\[
\varphi(x)=\begin{cases}
p(x)\quad\mbox{in }B_r\\
\Phi(x)\quad\mbox{in }\R^n\setminus B_r,
\end{cases}
\]
where $r>0$, $p(x)$ is a second order polynomial, and $\Phi\in L^1(\R^n,\omega)$. Then there exists a subsequence $\{I_0^{(k_j)}\}$ such that $f_{k_j}(x):=I_0^{(k_j)}\varphi(x)$ converges uniformly in $B_{r/2}$.

\emph{Proof of Claim 1:} Since $I_0^{(k)}(0)=0$, by uniformly ellipticity, $f_{k}$ is uniformly bounded in $\overline B_{r/2}$. We are going to find a uniform modulus of continuity for $f_k$ in $\overline B_{r/2}$ so that Claim 1 follows from Arzela-Ascoli theorem. 

Recall $\tau_z\varphi(x)=\varphi(x+z)$. Given $x,y\in B_{r/2}$ with $|x-y|<r/8$, we have
\[
f_k(x)-f_k(y)\le \mathcal M^+_{\mathscr L(\sigma_k)}(v-\tau_{y-x}v, x)+M_0|x-y|^\beta,
\]
where the first term has a modulus of continuity depends on $\varphi$ but not $I_0^{(k)}$ as shown in the proof of Lemma 6 in \cite{CS10}. This finishes the proof of Claim 1.

As long as we have Claim 1, it follows from the proof of Theorem 42 identically that there exists a subsequence of $\{I_0^{(k)}\}$, which is still denoted as $I_0^{(k)}$, that converges weakly in $B_4$ to some nonlocal operator $I_0$, and $I_0$ is uniformly elliptic with respect to the class $\mathscr{L}_0(\sigma)$.
\end{proof}

\small

\bigskip

\noindent Tianling Jin

\noindent Department of Mathematics, The University of Chicago\\ 
5734 S. University Ave, Chicago, IL 60637, USA\\[1mm]
Email: \textsf{tj@math.uchicago.edu}

\medskip

\noindent Jingang Xiong

\noindent Beijing International Center for Mathematical Research\\ 
Peking University, Beijing 100871, China\\[1mm]
Email: \textsf{jxiong@math.pku.edu.cn}

\end{document}